\newcommand\abs[2][empty]{\csname#1\endcsname \lvert{#2}\csname#1\endcsname\rvert}
\newcommand\doublebar[2][empty]{\csname#1\endcsname \lVert{#2}\csname#1\endcsname\rVert}
\newcommand\dist{\mathop{\mathrm{dist}}\nolimits}
\newcommand\Div{\mathop{\mathrm{div}}\nolimits}
\newcommand\supp{\mathop{\mathrm{supp}}\nolimits}
\newcommand\re{\mathop{\mathrm{Re}}\nolimits}
\newcommand\R{\mathbb{R}} 
\newcommand\C{\mathbb{C}} 
\newcommand\N{\mathbb{N}}
\newcommand\1{\mathbf{1}}
\newcommand\eqitem{\refstepcounter{equation}%
	\item[\textup{(\theequation)}]%
	\expandafter\gdef\csname@currentlabel\endcsname{\theequation}}
\newcommand\mydot{\dot} 
\newcommand\mat{\bm} 
\newcommand\dmn{d}
\newcommand\pdmn{d}
\newcommand\dmnMinusOne{{d-1}}
\newcommand\pdmnMinusOne{{(d-1)}}
\newtheorem{thm}[equation]{Theorem}
\newtheorem{lem}[equation]{Lemma}
\newtheorem{cor}[equation]{Corollary}
\theoremstyle{definition}
\theoremstyle{remark}
\newtheorem{rmk}[equation]{Remark}
\numberwithin{equation}{section}
\numberwithin{figure}{section}
\keywords{Elliptic equation, higher-order differential equation, fundamental solution, Caccioppoli inequality}
\subjclass[2010]{Primary 
35J48, 
Secondary
31B10, 
35C15
}
\begin{document}

\title[Gradient estimates and the fundamental solution]{Gradient estimates and the fundamental solution for higher-order elliptic systems with rough coefficients}

\author{Ariel Barton}
\address{Ariel Barton, 202 Math Sciences Bldg., University of Missouri, Columbia, MO 65211}
\email{bartonae@missouri.edu}

\begin{abstract}
We extend several well-known tools from the theory of second-order divergence-form elliptic equations to the case of higher-order equations. These tools are the Caccioppoli inequality, Meyers's reverse H\"older inequality for gradients, and the fundamental solution. Our construction of the fundamental solution may also be of interest in the theory of second-order operators, as we impose no regularity assumptions on our elliptic operator beyond ellipticity and boundedness of coefficients.
\end{abstract}

\maketitle


\section{Introduction}

In this paper we will study divergence-form elliptic operators $L$ of order~$2m$, given formally by
\begin{equation*}
(L\vec u)_j = (-1)^m\sum_{k=1}^N \sum_{\abs{\alpha}=m} \sum_{\abs{\beta}=m} \partial^\alpha (A_{\alpha\beta}^{jk} \partial^{\beta} u_k)
\end{equation*}
and in particular systems of equations of the form $(L \vec u)_j= (-1)^m\sum_{\abs{\alpha}=m} \partial^\alpha F_{j,\alpha}$. (We will write this system of equations as $L\vec u=\Div_m\mydot F$.)

The theory of second-order operators, that is, operators with $m=1$, 
has a long and celebrated history. 
Important tools in the theory of second-order elliptic systems include the Caccioppoli inequality, Meyers's reverse H\"older inequality for derivatives, and the fundamental solution. 

The boundary Caccioppoli inequality states that, if $L\vec u= \Div \mydot F$ in some domain~$\Omega$ for some second-order elliptic operator~$L$, and if either $\vec u=0$ or $\nu\cdot \mat A\nabla \vec u=0$ on $\partial\Omega\cap B(x_0,r)$, where $\nu$ is the unit outward normal vector, 
then the gradient of $\vec u$ may be controlled by $\vec u$ and the inhomogeneous term~$\mydot F$, as
\begin{equation}\label{eqn:Caccioppoli:2}
\int_{B(x_0,r)\cap\Omega}\abs{\nabla \vec u}^2\leq \frac{C}{r^2}
\int_{B(x_0,2r)\cap\Omega}\abs{\vec u}^2 
+ C\int_{B(x_0,2r)\cap\Omega}\abs{\mydot F}^2.\end{equation}

Meyers's reverse H\"older estimate (see \cite{Mey63}) states that, if $L \vec u=\Div \mydot F$ in some ball $B(x_0,r)$, then $\nabla \vec u$ satisfies the reverse H\"older estimate 
\begin{equation}\label{eqn:Meyers:2}
\biggl(\int_{B(x_0,r)}\abs{\nabla \vec u}^p\biggr)^{1/p}
\leq \frac{C}{r^{\pdmn/2-\pdmn/p}}
\biggl(\int_{B(x_0,2r)}\abs{\nabla \vec u}^2\biggr)^{1/2} 
+ C\biggl(\int_{B(x_0,2r)}\abs{\mydot F}^p\biggr)^{1/p}\end{equation}
for some $p>2$ depending only on the operator~$L$. With some care, Meyers's estimate may also be extended to the boundary case, at least in relatively nice domains. Both of these inequalities have been used extensively in the literature.

Much less is known in the case of higher-order elliptic systems in the rough setting.
In the case of continuous coefficients and $C^m$ domains, some regularity results are available; see \cite{AgmDN64}. In the interior case the Caccioppoli inequality 
\begin{equation}
\label{eqn:Caccioppoli:Cam80}
\int_{B(x_0,r)}\abs{\nabla^m \vec u}^2
\leq 
\sum_{j=0}^{m-1}\frac{C}{r^{2m-2j}}
\int_{B(x_0,2r)}\abs{\nabla^j\vec u}^2 
+
{C}
\int_{B(x_0,2r)}\abs{\mydot F}^2 
\end{equation}
was established in \cite{Cam80} for general bounded and strongly elliptic coefficients. It would of course be preferable to establish this bound with only a norm of $\vec u$, and not of~$\nabla^j\vec u$, on the right-hand side.
In \cite{AusQ00}, the authors established the bound
\begin{equation}
\label{eqn:Caccioppoli:AusQ00}
\int_{B(x_0,r)}\abs{\nabla^m \vec u}^2
\leq 
\frac{C(\varepsilon)}{r^2}
\int_{B(x_0,2r)}\abs{\vec u}^2 
+
\varepsilon
\int_{B(x_0,2r)}\abs{\nabla^m \vec u}^2 
\end{equation}
for solutions $\vec u$ to the equation $L\vec u=0$ in $B(x_0,2r)$, where $\varepsilon$ is an arbitrary positive number and $C(\varepsilon)$ a constant depending on $\varepsilon$. Either of the bounds \eqref{eqn:Caccioppoli:Cam80} or \eqref{eqn:Caccioppoli:AusQ00} suffices to generalize Meyers's estimate \eqref{eqn:Meyers:2} to the higher-order case, and in fact this is done in both \cite{Cam80} and~\cite{AusQ00}.

The boundary Caccioppoli inequality in the case of rough domains has not been established; we mention that some pointwise estimates were established in \cite{MayMaz08,MayMaz09B} in the case where $L=\Delta^2$ is the biharmonic operator.

In Section~\ref{sec:Caccioppoli}, we will establish the higher-order Caccioppoli inequality with no terms involving derivatives of~$\vec u$ on the right-hand side; we will also establish this inequality in the Dirichlet and Neumann boundary cases. The main results of this section are Lemma~\ref{lem:Caccioppoli:Neumann} and Corollaries~\ref{cor:Caccioppoli:interior} and~\ref{cor:Caccioppoli:Dirichlet}.
In Section~\ref{sec:Meyers}, we will provide boundary versions and some refinements to the generalization of Meyers's inequality \eqref{eqn:Meyers:2}, and in particular will carefully state the consequences for the lower-order derivatives of the solution~$\vec u$. The main results of this section are Theorems~\ref{thm:Meyers} and~\ref{thm:Meyers:Neumann}.

Another important tool in the second-order case is the fundamental solution  $\mat E^L(x,y)$. This solution is a (matrix-valued) distribution defined on $\R^\dmn\times\R^\dmn$ such that, formally, $L \mat E^L(\,\cdot\,,y)=\mat I\delta_y$, where $\delta_y$ denotes the Dirac mass and $\mat I$ denotes the identity matrix. 
In Section~\ref{sec:fundamental} we will construct the fundamental solution for higher-order elliptic systems.

The fundamental solution was constructed for second-order equations with real coefficients (that is, if $N=m=1$, $A_{\alpha\beta}$ real)  in \cite{LitSW63} (in the case of symmetric coefficients $A_{\alpha\beta}=A_{\beta\alpha}$), in
\cite{GruW82} (in dimension $\dmn\geq 3$) and in \cite{KenN85} (in dimension $\dmn=2$). In dimension $\dmn=2$ these results were extended to the case of complex coefficients in \cite{AusMT98}; as observed in \cite{DonK09} their strategy carries over to the case of systems with $\dmn=2$, $m=1$ and $N\geq 1$.

In the case of second-order systems (that is, $m=1$ and $N\geq 1$),
the fundamental solution was constructed in the papers \cite{Fuc86,DolM95,HofK07,Ros13} under progressively weaker conditions on the operator~$L$. 

Specifically, the paper \cite{Ros13} constructs the fundamental solution for the operator $L$ under the assumption that, if $L\vec u=0$ in some ball~$B(x,r)$, then $\vec u$ is continuous in~$B(x,r)$ and satisfies the local boundedness estimate
\begin{equation}\label{eqn:Moser}
\abs{\vec u(x)}\leq \biggl(\frac{1}{r^\dmn}\int_{B(x,r)} \abs{\vec u}^2\biggr)^{1/2}.\end{equation}
This assumption is not true for all elliptic operators; see \cite{Fre08}.

All of the above papers made the same or stronger assumptions.  Specifically, \cite{Fuc86,DolM95} constructed the fundamental solution in the case of systems with continuous coefficients, for which the bound  \eqref{eqn:Moser} is always valid; see \cite[Theorem~6.4.8]{Mor66} or \cite[Section~3]{DolM95}. \cite{HofK07} constructed the fundamental solution using the stronger assumption of local H\"older continuity of solutions.  The papers \cite{LitSW63,GruW82,KenN85} considered only the case $N=m=1$ with real coefficients; in this case the bound \eqref{eqn:Moser} was established by Moser in \cite{Mos61}.
The paper \cite{AusMT98} constructed the fundamental solution in dimension $\dmn=2$. In this case Meyers's estimate~\eqref{eqn:Meyers:2} implies that solutions $\vec u$ locally satisfy $\nabla \vec u\in L^p$ for some $p>\dmn$; Morrey's inequality then implies that solutions are necessarily locally H\"older continuous. The papers \cite{DonK09,KanK10,ChoDK12} investigate the related topic of Green's functions in domains; they too require local boundedness of solutions (either as an explicit assumption or by virtue of working in dimension $\dmn=2$).

Fewer results are available in the case of higher-order equations. In the case of the polyharmonic operator $L=(-\Delta)^m$ we have an explicit formula for the fundamental solution, and this solution has been used extensively in the theory of biharmonic and polyharmonic functions. 
The fundamental solution in the case of general constant coefficients has also been used; see, for example,  \cite{Fri61,PipV95b,Ver96,Maz02,MazMS10,Dal13,DalMM13}. 
In the case of variable analytic coefficients the fundamental solution was constructed in \cite{Joh55}, and in the case of smooth coefficients the Green's function in domains was constructed in \cite{Dud01}.

We will initially construct the fundamental solution for higher-order systems only in the case where solutions are continuous and satisfy the local bound~\eqref{eqn:Moser}. Again by Morrey's inequality and the higher-order generalizations of the Caccioppoli inequality~\eqref{eqn:Caccioppoli:2}, this is true whenever the elliptic operator $L$ is of order $2m>\dmn$. Thus, we will begin by constructing the fundamental solution in the case of low dimension or high order. Then, given an operator $L$ of order $2m\leq\dmn$, we will construct an appropriate auxiliary operator $\widetilde L$ of order $2\widetilde m>\dmn$ and construct the fundamental solution $\mat E^L$ for $L$ from the fundamental solution~$\mat E^{\widetilde L}$ for~$\widetilde L$. This technique was used in \cite{AusHMT01} in the proof of the Kato conjecture for higher-order operators. Our main results concerning the fundamental solution are summarized as Theorem~\ref{thm:fundamental:high:2} and the following remarks.

This paper may be of some interest to the reader interested only in second-order operators (in the case $\dmn\geq 3$ and in the case of complex coefficients or systems) as our construction extends to the case of operators whose solutions do not satisfy local bounds.


\section{Definitions}
\label{sec:dfn}

Throughout we work with a divergence-form elliptic system of $N$ partial differential equations of order~$2m$ in dimension $\dmn$.

We will often use multiindices in $\N^\dmn$. If $\gamma=(\gamma_1,\dots,\gamma_\dmn)$ is a multiindex, then $\abs{\gamma}=\gamma_1+\gamma_2+\dots+\gamma_\dmn$.
If $\delta=(\delta_1,\dots,\delta_\dmn)$ is another multiindex, then we say that $\delta\leq \gamma$ if $\delta_i\leq \gamma_i$ for all $1\leq i\leq\dmn$, and we say that $\delta<\gamma$ if in addition the strict inequality $\delta_i< \gamma_i$ holds for at least one such~$i$.

We will routinely consider arrays $\mydot F=\begin{pmatrix}F_{j,\gamma}\end{pmatrix}$ indexed by integers~$j$ with $1\leq j\leq N$ and by multiindices~$\gamma$ with $\abs{\gamma}=k$ for some~$k$.
In particular, if $\vec \varphi$ is a vector-valued function with weak derivatives of order up to~$k$, then we view $\nabla^k\vec \varphi$ as such an array, with \begin{equation*}(\nabla^k\vec\varphi)_{j,\gamma}=\partial^\gamma \varphi_j.\end{equation*}
The $L^2$ inner product of two such arrays of numbers $\mydot F$ and $\mydot G$ is given by
\begin{equation*}\bigl\langle \mydot F,\mydot G\bigr\rangle = \sum_{j=1}^N
\sum_{\abs{\gamma}=k}
\overline{F_{j,\gamma}}\, G_{j,\gamma}.\end{equation*}
If $\mydot F$ and $\mydot G$ are two arrays of $L^2$ functions defined in a measurable set~$\Omega\subseteq\R^\dmn$, then the inner product of $\mydot F$ and $\mydot G$ is given by
\begin{equation*}\bigl\langle \mydot F,\mydot G\bigr\rangle_\Omega = \sum_{j=1}^N
\sum_{\abs{\gamma}=k}
\int_{\Omega} \overline{F_{j,\gamma}}\, G_{j,\gamma}
.\end{equation*}

If $E\subset\R^\dmn$ is a set of finite measure, we let $\fint_E f=\frac{1}{|E|}\int_E f$, where $|E|$ denotes Lebesgue measure.
We let $\vec e_k$ be the unit vector in $\R^\dmn$ in the $k$th direction. We let $\mydot e_{j,\gamma}$ be the ``unit array'' corresponding to the multiindex~$\gamma$ and the number~$j$; thus, $\bigl\langle \mydot e_{j,\gamma},\mydot F\bigr\rangle = F_{j,\gamma}$.

We let $L^p(U)$ and $L^\infty(U)$ denote the standard Lebesgue spaces with respect to Lebesgue measure.
We denote the homogeneous Sobolev space $\dot W^p_k(U)$ by
\begin{equation*}\dot W^p_k(U)=\{u:\nabla^k u\in L^p(U)\}\end{equation*}
with the norm $\doublebar{u}_{\dot W^p_k(U)}=\doublebar{\nabla^k u}_{L^p(U)}$. (Elements of $\dot W^p_k(U)$ are then defined only up to adding polynomials of order~$k-1$.) We say that $u\in L^p_{loc}(U)$ or $u\in \dot W^p_{k,loc}(U)$ if $u\in L^p(V)$ or $u\in \dot W^p_k(V)$ for every bounded set $V$ with $\overline V\subset U$. 

\subsection{Elliptic operators}

Let $\mat A = \bigl( A^{jk}_{\alpha\beta} \bigr)$ be an array of measurable coefficients defined on $\R^\dmn$, indexed by integers $1\leq j\leq N$, $1\leq k\leq N$ and by multtiindices $\alpha$, $\beta$ with $\abs{\alpha}=\abs{\beta}=m$. If $\mydot F=\bigl(F_{j,\alpha}\bigr)$ is an array, then $\mat A\mydot F$ is the array given by 
\begin{equation*}(\mat A\mydot F)_{j,\alpha} = \sum_{k=1}^N
\sum_{\abs{\beta}=m}
A^{jk}_{\alpha\beta} F_{k,\beta}.\end{equation*}

Throughout we consider coefficients that satisfy the bound
\begin{align}
\label{eqn:elliptic:bounded:2}
\doublebar{\mat A}_{L^\infty(\R^\dmn)}
&\leq 
	\Lambda
\end{align}
for some $\Lambda>0$.
In our construction of the fundamental solution in Section~\ref{sec:fundamental}, we will consider only operators that satisfy the strict G\r{a}rding inequality
\begin{align}
\label{eqn:elliptic}
\re {\bigl\langle\nabla^m\vec \varphi,\mat A\nabla^m\vec \varphi\bigr\rangle_{\R^\dmn}} 
&\geq 
	\lambda\doublebar{\nabla^m\vec\varphi}_{L^2(\R^\dmn)}^2
\end{align}
for all $\vec \varphi\in \dot W^2_m(\R^\dmn)$ and for some $\lambda>0$ independent of~$\vec\varphi$. In Section~\ref{sec:Caccioppoli} we will consider weaker and stronger versions of the G\r{a}rding inequality.

We let $L$ be the $2m$th-order divergence-form operator associated with~$\mat A$. That is, we say that $L\vec u=\Div_m \mydot F$ in~$\Omega$ in the weak sense if, for every $\vec\varphi$ smooth and compactly supported in~$\Omega$, we have that
\begin{equation}
\label{eqn:L}
\bigl\langle\nabla^m\vec\varphi, \mat A\nabla^m\vec u\bigr\rangle_\Omega
=
\bigl\langle\nabla^m\vec\varphi, \mydot F\bigr\rangle_\Omega,
\end{equation}
that is, we have that
\begin{equation*}
\sum_{j=1}^N \sum_{k=1}^N
\sum_{\abs{\alpha}=\abs{\beta}=m}
\int_{\Omega}\partial^\alpha \bar \varphi_j\, A^{jk}_{\alpha\beta}\,\partial^\beta u_k
=
\sum_{j=1}^N
\sum_{\abs{\alpha}=m}
\int_{\Omega} \partial^\alpha \bar \varphi_j \, F_{j,\alpha}
.
\end{equation*}
In particular, if the left-hand side is zero for all such~$\vec\varphi$ then we say that $L\vec u=0$.

If $\mat A$ is such an array of coefficients, we let the adjoint array $\mat A^*$ be given by $(A^*)^{jk}_{\alpha\beta}=\overline{A^{kj}_{\beta\alpha}}$; we then let $L^*$ be the operator associated with~$\mat A^*$.

Throughout the paper we will let $C$ denote a constant whose value may change from line to line, but which depends only on the dimension $\dmn$, the ellipticity constants $\lambda$ and $\Lambda$ in the bounds~\eqref{eqn:elliptic:bounded:2} and \eqref{eqn:elliptic} (or variants thereof), and the order $2m$ of the operator~$L$. Any other dependencies will be indicated explicitly.

\section{The Caccioppoli inequality}
\label{sec:Caccioppoli}

In this section we will generalize the Caccioppoli inequality \eqref{eqn:Caccioppoli:2} to the case of higher-order elliptic systems.

We will begin with the following lemma.
\begin{lem}
\label{lem:Caccioppoli:1}
Let $L$ be the operator of order $2m$ associated to the coefficients~$\mat A$, where $\mat A$ satisfies the bound~\eqref{eqn:elliptic:bounded:2} and the weak G\r{a}rding inequality
\begin{align}
\label{eqn:elliptic:AusQ00}
\re {\bigl\langle\nabla^m\vec \varphi,\mat A\nabla^m\vec \varphi\bigr\rangle_{\R^\dmn}} 
&\geq 
	\lambda\doublebar{\nabla^m\vec\varphi}_{L^2(\R^\dmn)}^2
	-\delta\doublebar{\vec\varphi}_{L^2(\R^\dmn)}^2
\end{align}
for some $\lambda>0$ and some $\delta>0$, and for all smooth, compactly supported functions~$\vec\varphi$.

Let $x_0\in\R^\dmn$ and let $R>0$.
Suppose that $\vec u\in \dot W^2_m(B(x_0,R))$, that $\mydot F\in L^2(B(x_0,R))$, and that one of the following two conditions holds.
\begin{description}
\eqitem\label{eqn:Caccioppoli:interior} $L\vec u=\Div_m\mydot F$ in $\Omega=B(x_0,R)$, or
\eqitem\label{eqn:Caccioppoli:Dirichlet} $L\vec u=\Div_m\mydot F$ in some domain $\Omega\subsetneq B(x_0,R)$, and $\vec u$ lies in the closure in $\dot W^2_m(B(x_0,R))$ of $\{\vec\varphi\in C^\infty(\R^\dmn):\vec\varphi\equiv 0 $ in $B(x_0,R)\setminus\Omega\}$.
\end{description}

Then, for any $0<r<R$, we have that
\begin{multline}
\label{eqn:Caccioppoli:1}
\int_{\Omega\cap B(x_0,r)} \abs{\nabla^m \vec u}^2 
\\\leq \sum_{i=0}^{m-1} \frac{C}{(R-r)^{2m-2i}}
	\int_{\Omega\setminus B(x_0,r)} \abs{\nabla^i \vec u}^2
+C \int_\Omega \abs{\mydot F}^2
+C\delta \int_\Omega \abs{\vec u}^2
\end{multline}
where $C$ is a constant depending only on the dimension~$\dmn$, the order~$2m$ of the elliptic operator~$L$ and the numbers $\lambda$ and $\Lambda$ in the bounds \eqref{eqn:elliptic:bounded:2} and~\eqref{eqn:elliptic:AusQ00}.
\end{lem}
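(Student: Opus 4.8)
The plan is to run the classical energy argument behind the Caccioppoli inequality, with the two adaptations forced by the higher-order, merely-G\r{a}rding setting: one tests the equation $L\vec u=\Div_m\mydot F$ against $\eta^{2m}\vec u$ rather than against $\eta^2\vec u$, and one recovers the coercive term by applying the G\r{a}rding inequality \eqref{eqn:elliptic:AusQ00} to $\eta^m\vec u$ --- legitimate because \eqref{eqn:elliptic:AusQ00} is a statement about a quadratic form, valid for every compactly supported function in $\dot W^2_m(\R^\dmn)$. The leading term then has to be separated from the many lower-order terms produced by the Leibniz rule, and the place where the order of the operator genuinely matters is that those terms turn out to carry just enough powers of the cutoff to be absorbed.

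Concretely, fix $\eta\in C^\infty_c(B(x_0,R))$ with $\eta\equiv1$ on $B(x_0,r)$, $0\le\eta\le1$, and $\doublebar{\nabla^j\eta}_{L^\infty(\R^\dmn)}\le C(R-r)^{-j}$ for $0\le j\le 2m$, and set $\mathcal A=B(x_0,R)\setminus B(x_0,r)$. For $k\in\{m,2m\}$ write $\nabla^m(\eta^k\vec u)=\eta^k\nabla^m\vec u+\mydot R_k$; the Leibniz rule, the bounds on $\nabla^j\eta$, and $0\le\eta\le1$ show that $\mydot R_k$ is supported in $\overline{\mathcal A}$ and that
\begin{equation*}
\abs{\mydot R_k}\le C\,\eta^{\,k-m}\sum_{i=0}^{m-1}(R-r)^{-(m-i)}\abs{\nabla^i\vec u},
\end{equation*}
the gain $\eta^{\,k-m}$ arising because in $\partial^\gamma(\eta^k)$ with $1\le\abs{\gamma}\le m$ at least $k-m$ of the $k$ factors of $\eta$ are left undifferentiated. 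I would then check that $\vec\varphi=\eta^{2m}\vec u$ is an admissible test function in \eqref{eqn:L}: this is immediate in case \eqref{eqn:Caccioppoli:interior}, where $\eta^{2m}\vec u$ lies in $\dot W^2_m(\R^\dmn)$ with compact support in $\Omega=B(x_0,R)$ and so is a $\dot W^2_m$-limit of functions in $C^\infty_c(\Omega)$, and in case \eqref{eqn:Caccioppoli:Dirichlet} it follows from the definition of the Dirichlet class, multiplying the defining approximants of $\vec u$ by $\eta^{2m}$. Since every integrand below carries a factor of $\vec u$ or of a derivative of $\vec u$, all of which vanish off $\overline\Omega$, the domain of integration may be taken to be $\R^\dmn$; in particular $\int_{\mathcal A}\abs{\nabla^i\vec u}^2=\int_{\Omega\setminus B(x_0,r)}\abs{\nabla^i\vec u}^2$.

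Using \eqref{eqn:L} with $\vec\varphi=\eta^{2m}\vec u$ together with the identity $\langle\eta^m\nabla^m\vec u,\mat A(\eta^m\nabla^m\vec u)\rangle=\langle\nabla^m(\eta^{2m}\vec u),\mat A\nabla^m\vec u\rangle-\langle\mydot R_{2m},\mat A\nabla^m\vec u\rangle$ (valid since the real scalar $\eta$ commutes with $\mat A$), and then applying \eqref{eqn:elliptic:AusQ00} to $\eta^m\vec u$ and expanding $\nabla^m(\eta^m\vec u)=\eta^m\nabla^m\vec u+\mydot R_m$ in both arguments of the bilinear form $\langle\,\cdot\,,\mat A\,\cdot\,\rangle$, I arrive at
\begin{multline*}
\lambda\,\doublebar{\nabla^m(\eta^m\vec u)}_{L^2}^2\le\delta\,\doublebar{\eta^m\vec u}_{L^2}^2+\re\langle\nabla^m(\eta^{2m}\vec u),\mydot F\rangle-\re\langle\mydot R_{2m},\mat A\nabla^m\vec u\rangle\\+\re\langle\eta^m\nabla^m\vec u,\mat A\mydot R_m\rangle+\re\langle\mydot R_m,\mat A\,\eta^m\nabla^m\vec u\rangle+\re\langle\mydot R_m,\mat A\mydot R_m\rangle.
\end{multline*}
Each of the last five terms on the right is estimated by Cauchy--Schwarz and then Young's inequality with a weight chosen to balance the powers of $R-r$: in every product the factor that carries the full weight $\eta^m$ --- supplied by $\eta^m\nabla^m\vec u$, or, in the term $\langle\mydot R_{2m},\mat A\nabla^m\vec u\rangle$, by the $\eta^m$ inside $\mydot R_{2m}$ --- is paired against a factor of $\mydot R_k$ or of $\mydot F$, so that the ``bad'' part is $\le\varepsilon\int\eta^{2m}\abs{\nabla^m\vec u}^2$ and the ``good'' part is bounded by $\int_\Omega\abs{\mydot F}^2$ or by $\sum_{i=0}^{m-1}(R-r)^{-2(m-i)}\int_{\mathcal A}\abs{\nabla^i\vec u}^2$.

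Combined with the elementary bounds $\doublebar{\nabla^m(\eta^m\vec u)}_{L^2}^2\ge(1-\varepsilon)\int\eta^{2m}\abs{\nabla^m\vec u}^2-C_\varepsilon\int\abs{\mydot R_m}^2$ and $\delta\doublebar{\eta^m\vec u}_{L^2}^2\le\delta\int_\Omega\abs{\vec u}^2$, choosing $\varepsilon$ small in terms of $\lambda$ and absorbing gives
\begin{equation*}
\int\eta^{2m}\abs{\nabla^m\vec u}^2\le C\sum_{i=0}^{m-1}(R-r)^{-2(m-i)}\int_{\mathcal A}\abs{\nabla^i\vec u}^2+C\int_\Omega\abs{\mydot F}^2+C\delta\int_\Omega\abs{\vec u}^2,
\end{equation*}
and since $\eta\equiv1$ on $B(x_0,r)$ this is \eqref{eqn:Caccioppoli:1}. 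The crux --- and the one step that genuinely uses the higher order --- is the weighted bookkeeping: one needs $\mydot R_m$ and $\mydot R_{2m}$ to retain the respective factors $\eta^0$ and $\eta^m$, which is what lets the cross terms be absorbed into $\lambda\int\eta^{2m}\abs{\nabla^m\vec u}^2$ even though $\eta$ degenerates near $\partial B(x_0,R)$, and this is precisely why one tests with $\eta^{2m}\vec u$ and uses \eqref{eqn:elliptic:AusQ00} on $\eta^m\vec u$ rather than transcribing the second-order computation verbatim. The only other point needing care is the density argument validating the test function in the Dirichlet case \eqref{eqn:Caccioppoli:Dirichlet}.
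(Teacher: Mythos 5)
Your proof is correct and follows essentially the same energy-method strategy as the paper's: test the weak formulation against a power of the cutoff times $\vec u$, apply the G\r{a}rding inequality \eqref{eqn:elliptic:AusQ00} to a half power of the cutoff times $\vec u$, and absorb the Leibniz remainders by Young's inequality. The only cosmetic difference is that you test with $\eta^{2m}\vec u$ and apply \eqref{eqn:elliptic:AusQ00} to $\eta^m\vec u$, whereas the paper uses $\varphi^{4m}\vec u$ and $\varphi^{2m}\vec u$; your observation that $\mydot R_{2m}$ retains a factor of $\eta^m$ to pair against $\eta^m\abs{\nabla^m\vec u}$ correctly justifies the more economical choice of powers, and the admissibility of the test function in the Dirichlet case is handled by the same density argument the paper invokes.
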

In Theorem~\ref{thm:Caccioppoli:2} we will strengthen this lemma by replacing the sum on the right-hand side by the $i=0$ term alone.
Our Theorem~\ref{thm:Caccioppoli:2} will thus be stronger than the bound \eqref{eqn:Caccioppoli:AusQ00} of \cite{AusQ00}; we have chosen to follow the example of \cite{AusQ00} and establish the Caccioppoli inequality for operators that satisfy the weak G\r{a}rding inequality \eqref{eqn:elliptic:AusQ00}, as well as operators that satisfy the strong G\r{a}rding inequality~\eqref{eqn:elliptic}.

Lemma~\ref{lem:Caccioppoli:1} was proven in \cite{Cam80} in the interior case \eqref{eqn:Caccioppoli:interior} for coefficients~$\mat A$ that satisfy the strong pointwise G\r{a}rding inequality
\begin{equation}\label{eqn:elliptic:pointwise}
\re \bigl\langle \mydot\eta, \mat A(x)\mydot \eta \bigr\rangle
\geq 
\lambda
\bigl\langle \mydot\eta, \mydot \eta \bigr\rangle
\quad\text{for almost every $x\in\R^\dmn$ and any array $\mydot\eta$.}
\end{equation}
Thus the main new result of Lemma~\ref{lem:Caccioppoli:1} is the case  \eqref{eqn:Caccioppoli:Dirichlet}, which corresponds to zero Dirichlet boundary values.

In the higher-order case, the condition that $\vec u$ have zero Neumann boundary values along $\partial\Omega\cap B(x_0,2r)$ may best be expressed by the following condition.
\begin{description}
\eqitem\label{eqn:Caccioppoli:Neumann}
$\vec u\in \dot W^2_m(B(x_0,R))$, and the equation 
\begin{equation*}\bigl\langle \nabla^m\vec\varphi,\mydot F\bigr\rangle_\Omega=\bigl\langle \nabla^m\vec\varphi,\mat A
\nabla^m\vec u\bigr\rangle_\Omega.\end{equation*}
is true for all 
$\vec\varphi$ is smooth and supported in $B(x_0,R)$, not only all $\vec \varphi$ supported in~$\Omega$. 
\end{description}
We will discuss the meaning of the Neumann boundary values of a solution extensively in a forthcoming paper.

\begin{lem}
\label{lem:Caccioppoli:Neumann}
If $L\vec u=\Div_m\mydot F$ in~$\Omega\subset B(x_0,R)$ and $\vec u$ satisfies the Neumann boundary condition~\eqref{eqn:Caccioppoli:Neumann}, then the conclusion \eqref{eqn:Caccioppoli:1} of Lemma~\ref{lem:Caccioppoli:1} is still true provided that the coefficients~$\mat A$ associated with the operator~$L$ satisfy the bound \eqref{eqn:elliptic:bounded:2} and the local G\r{a}rding inequality
\begin{align}
\label{eqn:elliptic:local}
\re \bigl\langle\nabla^m\vec \varphi,\mat A\nabla^m\vec \varphi\bigr\rangle_{\Omega} 
&\geq 
	\lambda\doublebar{\nabla^m\vec\varphi}_{L^2(\Omega)}^2
	-\delta\doublebar{\vec\varphi}_{L^2(\Omega)}^2
\end{align}
for all $\vec\varphi\in \dot W^2_m(\R^\dmn)$.
\end{lem}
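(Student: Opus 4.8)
The plan is to mimic the argument proving Lemma~\ref{lem:Caccioppoli:1}, substituting the hypotheses~\eqref{eqn:Caccioppoli:Neumann} and~\eqref{eqn:elliptic:local} at the two places where that argument uses information about $\vec u$ and about $\mat A$. Fix $0<r<R$, put $R'=(R+r)/2$, and choose a smooth cutoff $\eta$ with $\eta\equiv 1$ on $B(x_0,r)$, $\supp\eta\subset B(x_0,R')$, and $\abs{\nabla^k\eta}\leq C(R-r)^{-k}$ for $0\leq k\leq m$. Since $\vec u\in\dot W^2_m(B(x_0,R))$ and $\eta$ is supported in a compact subset of $B(x_0,R)$, the functions $\eta^m\vec u$ and $\eta^{2m}\vec u$, extended by zero, lie in $\dot W^2_m(\R^\dmn)$ with support in $B(x_0,R)$, and $\eta^{2m}\vec u$ is approximable there (by mollification) in $\dot W^2_m$ by functions smooth and supported in $B(x_0,R)$. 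The first crucial point is that, \emph{because} \eqref{eqn:Caccioppoli:Neumann} allows test functions supported in all of $B(x_0,R)$ (and $\vec u$ is not assumed to vanish outside $\Omega$), the choice $\vec\varphi=\eta^{2m}\vec u$ is admissible; as both sides of~\eqref{eqn:Caccioppoli:Neumann} are continuous in $\nabla^m\vec\varphi\in L^2(\Omega)$, we obtain
\begin{equation*}
\bigl\langle\nabla^m(\eta^{2m}\vec u),\mat A\nabla^m\vec u\bigr\rangle_\Omega
=\bigl\langle\nabla^m(\eta^{2m}\vec u),\mydot F\bigr\rangle_\Omega.
\end{equation*}

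The second crucial point is that I would now invoke the local G\r{a}rding inequality~\eqref{eqn:elliptic:local} for $\eta^m\vec u\in\dot W^2_m(\R^\dmn)$: since the identity above lives on $\Omega$ while $\eta^m\vec u$ need not be supported in $\Omega$, it is precisely the $L^2(\Omega)$ form of the G\r{a}rding inequality that is both available and needed. Write $\nabla^m(\eta^m\vec u)=\eta^m\nabla^m\vec u+\mydot R$ and $\nabla^m(\eta^{2m}\vec u)=\eta^{2m}\nabla^m\vec u+\mydot R'$. The Leibniz rule gives that $\mydot R$ and $\mydot R'$ are supported in $\supp\nabla\eta\subset B(x_0,R)\setminus B(x_0,r)$ and satisfy
\begin{equation*}
\abs{\mydot R}\leq C\sum_{i=0}^{m-1}(R-r)^{i-m}\abs{\nabla^i\vec u},
\qquad
\abs{\mydot R'}\leq C\,\eta^m\sum_{i=0}^{m-1}(R-r)^{i-m}\abs{\nabla^i\vec u},
\end{equation*}
the factor $\eta^m$ in the bound for $\mydot R'$ surviving because at most $m$ of the $2m$ copies of $\eta$ are differentiated --- this is what makes the exponent $2m$, rather than $m$, the right choice. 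Since $\eta^m$ is a real scalar, $\bigl\langle\eta^m\nabla^m\vec u,\mat A(\eta^m\nabla^m\vec u)\bigr\rangle_\Omega=\bigl\langle\eta^{2m}\nabla^m\vec u,\mat A\nabla^m\vec u\bigr\rangle_\Omega$; substituting $\eta^{2m}\nabla^m\vec u=\nabla^m(\eta^{2m}\vec u)-\mydot R'$ together with the tested equation above, then expanding $\nabla^m(\eta^m\vec u)$ in both slots of the G\r{a}rding form and estimating the cross terms with \eqref{eqn:elliptic:bounded:2}, one arrives at
\begin{equation*}
\lambda\doublebar{\nabla^m(\eta^m\vec u)}_{L^2(\Omega)}^2
\leq
\re\bigl\langle\nabla^m(\eta^{2m}\vec u),\mydot F\bigr\rangle_\Omega
+\delta\int_\Omega\abs{\vec u}^2
+C\int_\Omega\Bigl(\abs{\mydot R}^2+\abs{\mydot R'}\,\abs{\nabla^m\vec u}+\eta^m\abs{\mydot R}\,\abs{\nabla^m\vec u}\Bigr).
\end{equation*}

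It then remains to estimate the right-hand side by Young's inequality with a small parameter $\varepsilon$, which is routine: writing $\nabla^m(\eta^{2m}\vec u)=\eta^{2m}\nabla^m\vec u+\mydot R'$ with $\abs{\eta^{2m}\nabla^m\vec u}\leq\eta^m\abs{\nabla^m\vec u}$, and using the bound on $\abs{\mydot R'}$ above, every product on the right-hand side is split into a piece bounded by $\varepsilon\int_\Omega\eta^{2m}\abs{\nabla^m\vec u}^2$ and a piece bounded by $C(\varepsilon)\int_\Omega\abs{\mydot F}^2+C(\varepsilon)\sum_{i=0}^{m-1}(R-r)^{2i-2m}\int_{\Omega\setminus B(x_0,r)}\abs{\nabla^i\vec u}^2$ (here $\eta\leq1$ and $\supp\mydot R,\supp\mydot R'\subset\Omega\setminus B(x_0,r)$ are used). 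Combining this with $\doublebar{\nabla^m(\eta^m\vec u)}_{L^2(\Omega)}^2\geq\frac12\int_\Omega\eta^{2m}\abs{\nabla^m\vec u}^2-\doublebar{\mydot R}_{L^2(\Omega)}^2$, choosing $\varepsilon$ small enough to absorb $\varepsilon\int_\Omega\eta^{2m}\abs{\nabla^m\vec u}^2$ into the left-hand side (finite because $\nabla^m\vec u\in L^2(B(x_0,R))$), and using $\int_{\Omega\cap B(x_0,r)}\abs{\nabla^m\vec u}^2\leq\int_\Omega\eta^{2m}\abs{\nabla^m\vec u}^2$ since $\eta\equiv1$ on $B(x_0,r)$, yields~\eqref{eqn:Caccioppoli:1}. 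I do not expect any genuinely new obstacle beyond Lemma~\ref{lem:Caccioppoli:1}; the points demanding care are keeping every inner product over $\Omega$ (so that \eqref{eqn:Caccioppoli:Neumann} and \eqref{eqn:elliptic:local} are exactly what is required) and tracking the powers of $\eta$ in the Leibniz errors --- in particular the $\eta^m$ in $\mydot R'$, without which the term pairing $\mydot R'$ against the bare $\nabla^m\vec u$ coming from the equation could not be absorbed.
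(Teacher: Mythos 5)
Your proof is correct and is essentially the paper's own argument, which proves Lemmas~\ref{lem:Caccioppoli:1} and~\ref{lem:Caccioppoli:Neumann} simultaneously: the paper tests against $\varphi^{4m}\vec u$ and applies the G\r{a}rding inequality to $\varphi^{2m}\vec u$, which under the substitution $\eta=\varphi^2$ is exactly your choice of $\eta^{2m}\vec u$ and $\eta^m\vec u$, with the same $2:1$ exponent ratio producing the surviving $\eta^m$ factor in $\mydot R'$ that you correctly identify as the crux of the absorption. You also correctly pinpoint the two places where the Neumann hypotheses enter (admissibility of $\eta^{2m}\vec u$ via~\eqref{eqn:Caccioppoli:Neumann}, and the $\Omega$-local form~\eqref{eqn:elliptic:local} of ellipticity), matching the paper's remarks.
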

Notice that the pointwise ellipticity condition \eqref{eqn:elliptic:pointwise} implies the local G\r{a}rding inequality~\eqref{eqn:elliptic:local}.

In all cases we assume that $\vec u$ is defined in the ball $B(x_0,R)$; equivalently, we assume that we may extend $\vec u$ from $\Omega$ to the ball. 
This extension is very natural in the interior or Dirichlet cases but must be explicitly assumed in the Neumann case.
If $\Omega$ is a Lipschitz domain and $\nabla^m \vec u\in L^2(\Omega)$, then by a well-known result of Calder\'on and Stein, an extension of $\vec u$ to $B(x_0,R)$ (indeed, to $\R^\dmn$) exists. Such extensions are also guaranteed to exist under weaker conditions on~$\Omega$; see, for example, \cite{Jon81}. 

Notice further that in the interior and Neumann cases \eqref{eqn:Caccioppoli:interior} and~\eqref{eqn:Caccioppoli:Neumann} the conclusion \eqref{eqn:Caccioppoli:1} remains valid if we modify $\vec u$ by adding a polynomial of order~$m-1$; however, this is not true in the Dirichlet case \eqref{eqn:Caccioppoli:Dirichlet}, as in this case we must maintain the condition $\vec u\equiv 0$ in $B(x_0,R)\setminus\Omega)$.

\begin{proof}[Proof of Lemmas~\ref{lem:Caccioppoli:1} and~\ref{lem:Caccioppoli:Neumann}]
Let $\varphi$ be a smooth, nonnegative real-valued test function supported in $B(x_0,R)$ and identically equal to 1 in $B(x_0,r)$.
We require $\abs{\nabla^k\varphi}\leq C_k (R-r)^{-k}$.

Observe that $\vec\psi=\varphi^{4m} \vec u$ is a function supported in $B(x_0,R)$ with $\nabla^m\vec\psi\in L^2(B(x_0,R))$. By definition of $L\vec u$ or condition~\eqref{eqn:Caccioppoli:Neumann}, and by density of smooth functions, we have that
\begin{align*}
\bigl\langle \nabla^m(\varphi^{4m} \vec u),\mydot F\bigr\rangle_\Omega
&= 
	\bigl\langle \nabla^m (\varphi^{4m}\vec u), \mat A\nabla^m\vec u \bigr\rangle_\Omega
.\end{align*}

By the product rule, there are constants $a_{\alpha,\gamma}$ such that \begin{equation*}\partial^\alpha (w\,v) =\sum_{\gamma\leq \alpha} a_{\alpha,\gamma} \partial^\gamma w\,\partial^{\alpha-\gamma} v\end{equation*}
for all suitably differentiable functions $v$ and~$w$.
Notice that $a_{\alpha,0}=a_{\alpha,\alpha}=1$.

By definition of the inner product, we have that 
\begin{align*}
\abs[big]{\bigl\langle \nabla^m(\varphi^{4m} \vec u),\mydot F\bigr\rangle_\Omega}
&=
\abs[bigg]{\sum_{j=1}^N
\sum_{\abs{\alpha}=m}\int_\Omega\partial^\alpha(\varphi^{4m} \bar u_j)\mydot F_{j,\alpha}
}
.\end{align*}
Applying the product rule to $\varphi^{4m}\vec u=(\varphi^{2m})( \varphi^{2m}\vec u)$, we see that
\begin{align*}
\abs[big]{\bigl\langle \nabla^m(\varphi^{4m} \vec u),\mydot F\bigr\rangle_\Omega}
&\leq
	\abs[bigg]{\sum_{j=1}^N
	\sum_{\abs{\alpha}=m}\int_\Omega\partial^\alpha(\varphi^{2m} \bar u_j)\,\varphi^{2m} F_{j,\alpha}
	}
	\\&\qquad
	+\abs[bigg]{\sum_{j=1}^N
	\sum_{\abs{\alpha}=m}\sum_{\gamma<\alpha} a_{\alpha,\gamma} \int_\Omega\partial^{\alpha-\gamma} (\varphi^{2m})\,\partial^\gamma(\varphi^{2m} \bar u_j)\mydot F_{j,\alpha}
	}
.\end{align*}
Thus
\begin{align*}
\abs[big]{\bigl\langle \nabla^m(\varphi^{4m} \vec u),\mydot F\bigr\rangle_\Omega}
&\leq
	\doublebar{\nabla^m(\varphi^{2m}\vec u)}_{L^2(\Omega)} \doublebar{\mydot F}_{L^2(\Omega)}
	\\&\qquad
	+ C \sum_{i=0}^{m-1} \frac{1}{(R-r)^{m-i}}
	\doublebar{\nabla^i u}_{L^2({\Omega\setminus B(x_0,r)})}  \doublebar{\mydot F}_{L^2(\Omega)}
.\end{align*}

We now consider the right-hand side. We have that
\begin{align*}
\bigl\langle \nabla^m (\varphi^{4m}\vec u), \mat A\nabla^m\vec u \bigr\rangle_\Omega
&= 
		\sum_{j,k,\alpha,\beta}
		\int_\Omega 		
		\sum_{\gamma<\alpha}
		a_{\alpha,\gamma}
		\partial^{\alpha-\gamma}(\varphi^{2m})
		\partial^\gamma (\varphi^{2m} \bar u_j) 
		\,A_{\alpha\beta}^{j,k} \,\partial^\beta u_k
	\\&\qquad+
		\sum_{j,k,\alpha,\beta}
		\int_\Omega 		
		\varphi^{2m}
		\partial^\alpha (\varphi^{2m} \bar u_j) 
		\,A_{\alpha\beta}^{j,k} \,\partial^\beta u_k
\end{align*}
where the sums are taken over all $j$, $k$, $\alpha$, $\beta$ with $1\leq j\leq N$, $1\leq k\leq N$ and $\abs{\alpha}=\abs{\beta}=m$.
Now, we may write
\begin{equation*}\sum_{\gamma<\alpha}
		a_{\alpha,\gamma}
		\partial^{\alpha-\gamma}(\varphi^{2m})
		\partial^\gamma (\varphi^{2m} \bar u_j) 
=	
	\sum_{\zeta<\alpha}
	\varphi^{2m}
	\Phi_{\alpha,\zeta}
	\partial^\zeta \bar u_j
\end{equation*}
for some functions $\Phi_{\alpha,\zeta}$ supported in $B(x_0,R)\setminus B(x_0,r)$ with
$\abs{\Phi_{\alpha,\zeta}}\leq C(R-r)^{\abs{\zeta}-\abs{\alpha}}$. 
Therefore
\begin{align*}
\bigl\langle \nabla^m (\varphi^{4m}\vec u), \mat A\nabla^m\vec u \bigr\rangle_\Omega
&= 
		\sum_{j,k,\alpha,\beta}
		\int_\Omega 				
		\partial^\alpha (\varphi^{2m} \bar u_j) 
		\,A_{\alpha\beta}^{j,k} \,(\varphi^{2m}\partial^\beta u_k)
	\\&\quad+
		\sum_{j,k,\alpha,\beta}
		\int_{\Omega\setminus B(x_0,r)} 		
		\sum_{\zeta<\alpha}		
		\Phi_{\alpha,\zeta}
		\partial^\zeta \bar u_j
		\,A_{\alpha\beta}^{j,k} \,(\varphi^{2m}\partial^\beta u_k)
.\end{align*}
We rewrite the two terms $\varphi^{2m}\partial^\beta u_k$ to see that
\begin{multline*}
\bigl\langle \nabla^m (\varphi^{4m}\vec u), \mat A\nabla^m\vec u \bigr\rangle_\Omega
\\\begin{aligned}
&= 
		\bigl\langle \nabla^m (\varphi^{2m}\vec u), \mat A\nabla^m (\varphi^{2m}\vec u)\bigr\rangle_\Omega
	\\&\qquad+
		\sum_{j,k,\alpha,\beta}
		\int_{\Omega}	
		\sum_{\zeta<\alpha}
		\Phi_{\alpha,\zeta}
		\partial^\zeta \bar u_j
		\,A_{\alpha\beta}^{j,k} \,\partial^\beta(\varphi^{2m} u_k)
	\\&\qquad-
		\sum_{j,k,\alpha,\beta}
		\int_\Omega 		
		\sum_{\gamma<\beta}
		a_{\beta,\gamma}
		\Bigl(\partial^\alpha (\varphi^{2m} \bar u_j) 
		+\sum_{\zeta<\alpha}
		\Phi_{\alpha,\zeta}
		\partial^\zeta \bar u_j
		\Bigr)
		\,A_{\alpha\beta}^{j,k} \,\partial^{\beta-\gamma}(\varphi^{2m})\,\partial^\gamma u_k
.\end{aligned}\end{multline*}
Observe that the integrands in the second and third terms are zero in $B(x_0,r)$.

By the G\r{a}rding inequality \eqref{eqn:elliptic:AusQ00} or  \eqref{eqn:elliptic:local},
\begin{equation*}
\lambda \int_\Omega 
		\abs{\nabla^m(\varphi^{2m} \vec u) }^2
	\leq
		\re	\bigl\langle\nabla^m(\varphi^{2m}\vec u),\mat A\nabla^m(\varphi^{2m}\vec u)\bigr\rangle_\Omega
	+ \delta\doublebar{\varphi^{2m}\vec u}_{L^2(\Omega)}^2
.\end{equation*}
Thus
\begin{align*}
\lambda \int_\Omega 
		\abs{\nabla^m(\varphi^{2m}\vec u) }^2
&\leq	
	\abs{
	\bigl\langle \nabla^m (\varphi^{4m}\vec u), \mat A\nabla^m\vec u \bigr\rangle_\Omega
	}+ \delta\doublebar{\vec u}_{L^2(\Omega)}^2
	\\&\qquad+
	\doublebar{\nabla^m(\varphi^{2m}\vec u)}_{L^2(\Omega)} 
	\sum_{i=0}^{m-1} \frac{C}{(R-r)^{m-i}}
	\doublebar{\nabla^i u}_{L^2({\Omega\setminus B(x_0,r)})} 
	\\&\qquad+ 	
	\sum_{i=0}^{m-1} \frac{C}{(R-r)^{m-i}}
	\doublebar{\nabla^i u}_{L^2({\Omega\setminus B(x_0,r)})}^2
.\end{align*}
Recalling that 
\begin{align*}
\abs{
\bigl\langle \nabla^m (\varphi^{4m}\vec u), \mat A\nabla^m\vec u \bigr\rangle_\Omega
}
&=\abs{
\bigl\langle \nabla^m (\varphi^{4m}\vec u), \mydot F\bigr\rangle_\Omega
}
\\&\leq 
	\doublebar{\nabla^m(\varphi^{2m}\vec u)}_{L^2(\Omega)} \doublebar{\mydot F}_{L^2(\Omega)}
	\\&\qquad
	+ C \sum_{i=0}^{m-1} \frac{1}{(R-r)^{m-i}}
	\doublebar{\nabla^i u}_{L^2({\Omega\setminus B(x_0,r)})}  \doublebar{\mydot F}_{L^2(\Omega)}
\end{align*}
we may derive the desired bound on $\doublebar{\nabla^m\vec u}_{L^2(\Omega\cap B(x_0,r))}$.
\end{proof}

We now wish to improve this inequality to a bound in terms of $\doublebar{u}_{L^2}$ rather than in terms of all of the lower-order derivatives. This will be done by the following theorem and its corollaries.

\begin{thm} \label{thm:Caccioppoli:2} 
Let $x_0\in\R^\dmn$ and let $R>0$. Let $\vec u\in \dot W^2_m(B(x_0,R))$ be a function that satisfies the inequality
\begin{equation}
\label{eqn:Caccioppoli:1:1}
\int_{B(x_0,\rho)} \abs{\nabla^{m} \vec u}^2 \leq 
\sum_{i=0}^{m-1}\frac{C_0}{(r-\rho)^{2m-2i} } \int_{B(x_0,r)\setminus B(x_0,\rho)} \abs{\nabla^{i} \vec u}^2 
+ F
\end{equation}
whenever $0<\rho<r<R$, for some number $F>0$.

Then $u$ satisfies the stronger inequality
\begin{equation}
\label{eqn:Caccioppoli:m}
\int_{B(x_0,r)} \abs{\nabla^m \vec u}^2 \leq 
\frac{C}{(R-r)^{2m}} \int_{B(x_0,R)\setminus B(x_0,r)} \abs{\vec u}^2 
+ C F
\end{equation}
for some constant $C$ depending only on $m$, the dimension $\dmn$ and the constant~$C_0$.

Furthermore, if $0\leq j\leq m$, then $u$ satisfies
\begin{equation}
\label{eqn:Caccioppoli:lower}
\int_{B(x_0,r)} \abs{\nabla^j \vec u}^2 \leq 
\frac{C}{(R-r)^{2j}} \int_{B(x_0,R)} \abs{\vec u}^2 
+ C R^{2m-2j}F
.\end{equation}
\end{thm}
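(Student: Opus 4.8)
The plan is to run an iteration (hole-filling) argument on the radii. The hypothesis \eqref{eqn:Caccioppoli:1:1} already has the form of a Caccioppoli inequality, but with the "bad" lower-order terms $\nabla^i\vec u$, $0\le i\le m-1$, on the right and a genuine hole (the integral is over $B(x_0,r)\setminus B(x_0,\rho)$). The key preliminary observation is that one can bound $\int_{B(x_0,\rho')}\abs{\nabla^i\vec u}^2$ in terms of $\int_{B(x_0,r')}\abs{\nabla^{i+1}\vec u}^2$ plus a lower-order term, by a Poincaré-type / interpolation inequality on annuli; iterating this down from $i=m-1$ to $i=0$ lets one trade each $\nabla^i\vec u$ for $\nabla^m\vec u$ on a slightly larger ball, at the cost of negative powers of the radius increments and a term $\int \abs{\vec u}^2$. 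Concretely, for $0<s<t<R$ and $0\le i\le m-1$ one has
\begin{equation*}
\int_{B(x_0,s)}\abs{\nabla^i\vec u}^2 \le \varepsilon (t-s)^{2}\int_{B(x_0,t)}\abs{\nabla^{i+1}\vec u}^2 + \frac{C(\varepsilon)}{(t-s)^{2i}}\int_{B(x_0,t)}\abs{\vec u}^2,
\end{equation*}
which follows from the standard interpolation inequality $\doublebar{\nabla^i w}_{L^2}\le \varepsilon\doublebar{\nabla^{i+1}w}_{L^2}+C(\varepsilon)\doublebar{w}_{L^2}$ applied to $w=\eta\vec u$ for a suitable cutoff $\eta$, after rescaling to the unit ball.

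Next I would feed these bounds back into \eqref{eqn:Caccioppoli:1:1} to obtain an inequality of the pure form
\begin{equation*}
\int_{B(x_0,\rho)}\abs{\nabla^m\vec u}^2 \le \theta \int_{B(x_0,r)}\abs{\nabla^m\vec u}^2 + \frac{C}{(r-\rho)^{2m}}\int_{B(x_0,R)}\abs{\vec u}^2 + C F,
\end{equation*}
valid for all $0<\rho<r<R$, where $\theta\in(0,1)$ can be made small (in particular $\theta<1$) by choosing $\varepsilon$ small in the interpolation step. Here one must be a little careful: each application of the interpolation inequality increases the radius, so one works with a finite chain of radii $\rho=s_0<s_1<\dots<s_m=r$ with, say, $s_{k}-s_{k-1}=(r-\rho)/m$, absorbs the top-order terms that appear at each stage into the left side using the smallness of $\varepsilon$, and collects the $\int\abs{\vec u}^2$ contributions. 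The function $\phi(t):=\int_{B(x_0,t)}\abs{\nabla^m\vec u}^2$ is monotone and finite (since $\vec u\in\dot W^2_m(B(x_0,R))$), so the classical iteration lemma of Giaquinta–Widman (see e.g. Giaquinta's book, or the argument in \cite{Cam80,AusQ00}) applies: an inequality $\phi(\rho)\le\theta\phi(r)+A/(r-\rho)^{2m}+B$ for all $\rho<r<R$ with $\theta<1$ forces $\phi(r)\le C(A/(R-r)^{2m}+B)$. Applying this with $A=C\int_{B(x_0,R)}\abs{\vec u}^2$, $B=CF$ gives exactly \eqref{eqn:Caccioppoli:m}, except that the $\abs{\vec u}^2$ integral is over all of $B(x_0,R)$ rather than the annulus; to recover the hole $B(x_0,R)\setminus B(x_0,r)$ one repeats the argument with an arbitrary intermediate radius $r'\in(r,R)$ in place of $R$ on the left and notes that the cutoffs used to produce the interpolation estimates can be taken supported away from $B(x_0,r)$, so that only values of $\vec u$ on the annulus enter — this is the standard "the constant function killed by $\nabla^m$" reduction but here just bookkeeping on supports.

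Finally, the lower-order bound \eqref{eqn:Caccioppoli:lower} follows by combining \eqref{eqn:Caccioppoli:m} with the same interpolation inequalities: for $0\le j\le m$, iterating $\doublebar{\nabla^j w}_{L^2}\lesssim \varepsilon\doublebar{\nabla^{j+1}w}_{L^2}+C(\varepsilon)\doublebar{w}_{L^2}$ up to order $m$ and then using \eqref{eqn:Caccioppoli:m} to control $\int\abs{\nabla^m\vec u}^2$ yields $\int_{B(x_0,r)}\abs{\nabla^j\vec u}^2\le C(R-r)^{-2j}\int_{B(x_0,R)}\abs{\vec u}^2+CR^{2m-2j}F$; tracking the powers of the radius increments through the rescaling to the unit ball produces precisely the stated powers $(R-r)^{-2j}$ and $R^{2m-2j}$. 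The main obstacle — really the only delicate point — is organizing the chain of radii and the absorption so that the higher-order terms generated by the interpolation steps at intermediate radii are genuinely swallowed by the left side (one cannot absorb across the whole gap at once), and making sure the cutoffs are supported in the annulus so that \eqref{eqn:Caccioppoli:m} comes out with the hole rather than the full ball; everything else is the routine Giaquinta iteration lemma plus rescaling.
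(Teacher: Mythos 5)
Your high-level strategy is the right one, and it is in fact the same as the paper's: trade each lower-order derivative in the hypothesis for top-order $\nabla^m\vec u$ plus $\vec u$ by an interpolation inequality, absorb the top-order contribution, and iterate. The paper's own proof reduces to a ``Claim'' stated for a one-parameter family of annuli (or balls) $A(r,\zeta)$, proved by downward induction on the order of the derivative, plus a final pass in which the claims at different orders are plugged into each other and into~\eqref{eqn:Caccioppoli:1:1}.

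However, the single step on which your reduction hinges does not hold in the form you state, and that gap is precisely the content of the proof. You claim that applying $\doublebar{\nabla^i w}_{L^2}\le\varepsilon\doublebar{\nabla^{i+1}w}_{L^2}+C(\varepsilon)\doublebar{w}_{L^2}$ to $w=\eta\vec u$, with $\eta$ a cutoff equal to $1$ on $B(x_0,s)$ and supported in $B(x_0,t)$, yields
\begin{equation*}
\int_{B(x_0,s)}\abs{\nabla^i\vec u}^2 \le \varepsilon (t-s)^{2}\int_{B(x_0,t)}\abs{\nabla^{i+1}\vec u}^2 + \frac{C(\varepsilon)}{(t-s)^{2i}}\int_{B(x_0,t)}\abs{\vec u}^2.
\end{equation*}
It does not: expanding $\nabla^{i+1}(\eta\vec u)$ by the product rule produces terms $\nabla^j\eta\,\nabla^{i+1-j}\vec u$ for every $1\le j\le i+1$, and so the right-hand side actually contains $\varepsilon(t-s)^{-2j}\int_{B(x_0,t)}\abs{\nabla^{i+1-j}\vec u}^2$ for all such $j$. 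In particular, the $j=1$ term is $\varepsilon(t-s)^{-2}\int_{B(x_0,t)}\abs{\nabla^{i}\vec u}^2$ --- the same order as the left-hand side, but with a coefficient that depends on $t-s$. That coefficient cannot be made less than $1$ uniformly in $t-s$, so the ``absorb and apply Giaquinta's iteration lemma'' step fails as you have set it up: the $\theta$ you would obtain is not a fixed constant. Your finite chain $\rho=s_0<\cdots<s_m=r$ with equal gaps does not repair this, because the same $(t-s)$-dependent same-order term reappears at every stage.

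The paper's proof of the Claim is built specifically to avoid this. It uses the \emph{multiplicative} interpolation coming from Plancherel, $\doublebar{\nabla f}_{L^2}^2\le C\doublebar{\nabla^2 f}_{L^2}\doublebar{f}_{L^2}$, applied to $f=\varphi_j\nabla^{k-1}\vec u$. After applying the inductive hypothesis to the $\nabla^{k+1}\vec u$ piece, it \emph{moves} a factor of $C_k/\delta_j^2$ from one square root to the other before applying $\sqrt a\sqrt b\le\tfrac12 a+\tfrac12 b$; this bookkeeping is exactly what makes the same-order term $\int_{A_{j+1}}\abs{\nabla^k\vec u}^2$ reappear with the fixed coefficient $\tfrac12$ rather than with a $(\xi-\zeta)$-dependent coefficient. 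It then iterates over an \emph{infinite geometric} sequence of radii $\rho_j=\zeta+(\xi-\zeta)(1-\tau)\sum_{i\le j}\tau^i$, and the choice $2\tau^{2k}>1$ makes the resulting series converge. The downward induction on the order $k$, the multiplicative form of the interpolation, and the geometric chain of radii are the three devices that jointly produce a fixed absorption constant; you will need some version of all three (or else a separate proof of your one-step inequality, for instance by compactness on a reference annulus followed by scaling and a covering argument) before the appeal to the Giaquinta--Widman lemma is legitimate.
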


Notice that in the bound~\eqref{eqn:Caccioppoli:m}, the right-hand side involves the quantity $\abs{u}^2$ integrated over an annulus ${B(x_0,R)\setminus B(x_0,r)}$, while in the bound \eqref{eqn:Caccioppoli:lower} $\abs{u}^2$ is integrated over the full ball ${B(x_0,R)}$. It is possible to use the Poincar\'e inequality and the bound \eqref{eqn:Caccioppoli:m} to improve the bound \eqref{eqn:Caccioppoli:lower} to an estimate involving the integral of $\abs{u}^2$ over an annulus, but this comes at a cost of introducing powers of $(R-r)/r$, and so we have chosen to state the bound \eqref{eqn:Caccioppoli:lower} as above.

Combined with Lemma~\ref{lem:Caccioppoli:1}, we immediately have the following corollaries.
\begin{cor}\label{cor:Caccioppoli:interior}
Let $x_0\in\R^\dmn$ and let $R>0$.
Suppose that $L\vec u=\Div_m\mydot F$ in $B(x_0,R)$, for some operator~$L$ of order~$2m$ that satisfies the bounds \eqref{eqn:elliptic:bounded:2} and~\eqref{eqn:elliptic:AusQ00}, some $\vec u\in\dot W^2_m(B(x_0,R))$, and some $\mydot F\in L^2(B(x_0,R))$.
If $0<r<R$ and $0\leq j\leq m$, then
\begin{align*}
\int_{B(x_0,r)} \abs{\nabla^j \vec u}^2 
&\leq 
\frac{C}{(R-r)^{2j} } \int_{B(x_0,R)} \abs{\vec u}^2
+ C R^{2m-2j}\int_{B(x_0,R)} \bigl(\abs{\mydot F}^2+\delta\abs{\vec u}^2\bigr)
,\\
\int_{B(x_0,r)} \abs{\nabla^m \vec u}^2 
&\leq 
\frac{C}{(R-r)^{2m} } \int_{B(x_0,R)\setminus B(x_0,r)} \abs{\vec u}^2
+ C \int_{B(x_0,R)} \bigl(\abs{\mydot F}^2+\delta\abs{\vec u}^2\bigr)
.\end{align*}
\end{cor}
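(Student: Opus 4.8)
The engine is a scale-invariant interpolation inequality for intermediate derivatives, which I would record first: there is a constant $C$, depending only on $\dmn$ and~$m$, such that if $0\leq j\leq m-1$, $\varepsilon>0$, $U$ is either a ball $B(x_0,\ell)$ or an annulus $B(x_0,a+\ell)\setminus B(x_0,a)$ with $a>0$, and $\nabla^m\vec v\in L^2(U)$, then
\[
\int_U\abs{\nabla^j\vec v}^2\leq \varepsilon\,\ell^{2m-2j}\int_U\abs{\nabla^m\vec v}^2+\frac{C}{\varepsilon^{j/(m-j)}\ell^{2j}}\int_U\abs{\vec v}^2 .
\]
For a ball this is the classical Gagliardo--Nirenberg inequality on the unit ball, rescaled by~$\ell$ and followed by Young's inequality. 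For an annulus I would cover $U$ by boundedly many overlapping pieces, each comparable at the scale~$\ell$ of the thickness to a fixed model domain --- a ball when $\ell\gtrsim a$, a curved slab when $\ell\ll a$ --- apply the ball (or slab) case on each piece, and add.

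\textbf{Proof of \eqref{eqn:Caccioppoli:m}.} Set $g(\rho)=\int_{B(x_0,\rho)}\abs{\nabla^m\vec u}^2$, which is finite for $\rho<R$ because $\nabla^m\vec u\in L^2(B(x_0,R))$. Fix $0<\rho<r<R$. In~\eqref{eqn:Caccioppoli:1:1}, apply the interpolation inequality on the annulus $B(x_0,r)\setminus B(x_0,\rho)$ at scale $\ell=r-\rho$ to each term with $1\leq i\leq m-1$. Because this scale matches the weight $(r-\rho)^{2m-2i}$ already present in~\eqref{eqn:Caccioppoli:1:1}, a single small constant~$\varepsilon$ (independent of $\rho,r,R$) makes the total coefficient of $\int_{B(x_0,r)\setminus B(x_0,\rho)}\abs{\nabla^m\vec u}^2$ at most~$\tfrac12$, and every remainder term takes the form $C(r-\rho)^{-2m}\int_{B(x_0,r)\setminus B(x_0,\rho)}\abs{\vec u}^2$. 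Since $\int_{B(x_0,r)\setminus B(x_0,\rho)}\abs{\nabla^m\vec u}^2=g(r)-g(\rho)$, moving $\tfrac12\bigl(g(r)-g(\rho)\bigr)$ to the left gives
\[
g(\rho)\leq \tfrac12\,g(r)+\frac{C}{(r-\rho)^{2m}}\int_{B(x_0,r)\setminus B(x_0,\rho)}\abs{\vec u}^2+CF .
\]
Now iterate: choose radii $r=\rho_0<\rho_1<\dots\nearrow R$ with $\rho_{n+1}-\rho_n=(1-\tau)\tau^n(R-r)$, $\tau$ close enough to~$1$ that $\tfrac12\tau^{-2m}<1$. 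The tail $2^{-N}g(\rho_N)\to0$ since $g(\rho_N)\leq\int_{B(x_0,R)}\abs{\nabla^m\vec u}^2<\infty$; the annuli $B(x_0,\rho_{n+1})\setminus B(x_0,\rho_n)$ are disjoint with union $B(x_0,R)\setminus B(x_0,r)$; and the iteration weights $(\tfrac12\tau^{-2m})^n$ stay bounded by~$1$, so the $\abs{\vec u}^2$-terms sum to at most $C(R-r)^{-2m}\int_{B(x_0,R)\setminus B(x_0,r)}\abs{\vec u}^2$. This is~\eqref{eqn:Caccioppoli:m}; the same argument with $R$ replaced by any $S\leq R$ gives~\eqref{eqn:Caccioppoli:m} for every pair $0<r<S\leq R$.

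\textbf{Proof of \eqref{eqn:Caccioppoli:lower}.} The case $j=m$ is~\eqref{eqn:Caccioppoli:m} with $S=R$ (as $R^{2m-2j}=1$), and $j=0$ is trivial. For $0<j<m$ I would first establish the ``long-range'' interpolation
\[
\int_{B(x_0,r)}\abs{\nabla^j\vec u}^2\leq C(S-r)^{2m-2j}\int_{B(x_0,S)}\abs{\nabla^m\vec u}^2+\frac{C}{(S-r)^{2j}}\int_{B(x_0,S)}\abs{\vec u}^2,\qquad 0<r<S,
\]
which is the ball case of the interpolation inequality when $S-r\gtrsim S$, and otherwise follows by covering $B(x_0,r)$ with boundedly overlapping balls of radius comparable to $S-r$ whose concentric doubles lie in $B(x_0,S)$, then summing the ball case over them. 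Taking $S=(r+R)/2$, so that $S-r=R-S=(R-r)/2$, and bounding $\int_{B(x_0,S)}\abs{\nabla^m\vec u}^2$ by~\eqref{eqn:Caccioppoli:m} with radii $S<R$, and finally using $(R-r)^{2m-2j}\leq R^{2m-2j}$, yields~\eqref{eqn:Caccioppoli:lower}.

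\textbf{Main obstacle.} The one genuinely delicate point is the annular case of the interpolation inequality with a constant that does not degenerate as the annulus becomes thin: one must verify that a thin annulus --- a ``thickened sphere'' --- decomposes, at the scale of its thickness, into boundedly overlapping pieces of uniformly controlled bi-Lipschitz geometry, treating the two degenerate regimes (the annulus nearly a full ball, or extremely thin) separately. Everything downstream --- the hole-filling identity, the standard iteration lemma, and the covering arguments for the lower-order derivatives --- is routine.
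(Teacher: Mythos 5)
Your argument is correct, but it re-derives Theorem~\ref{thm:Caccioppoli:2} by a route that is genuinely different from the paper's. The paper never uses an interpolation inequality on a thin annulus; instead it multiplies $\nabla^{k-1}\vec u$ by a cutoff $\varphi_j$ supported in the slightly enlarged annulus $\widetilde A_j$ and invokes only the whole-space Plancherel inequality $\doublebar{\nabla f}_{L^2(\R^\dmn)}^2\leq C\doublebar{\nabla^2 f}_{L^2(\R^\dmn)}\doublebar{f}_{L^2(\R^\dmn)}$ applied to the compactly supported function $f=\varphi_j\nabla^{k-1}\vec u$. The price of that choice is that the commutator $[\nabla^2,\varphi_j]$ produces an error involving $\nabla^{k+1}\vec u$, $\nabla^k\vec u$, $\nabla^{k-1}\vec u$, and the paper must therefore descend one derivative at a time (a downward induction in $k$ from $m$ to $1$, with a geometric iteration inside each step). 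You instead apply a local, all-orders interpolation inequality directly on the annulus $B(x_0,r)\setminus B(x_0,\rho)$ at the scale of its thickness, absorb all the lower-order terms of \eqref{eqn:Caccioppoli:1:1} in one pass, and then run a single hole-filling iteration. This is shorter and avoids the nested double iteration, but it shifts the burden onto the lemma you correctly flag as the delicate point: that the constant in the annular interpolation inequality is uniform in the ratio of inner radius to thickness. That fact is true --- rescale by the thickness, observe the rescaled annulus has uniformly bounded Lipschitz geometry in both limiting regimes (slab-like and ball-like), and cover by boundedly overlapping model pieces --- but it is not a textbook citation, which is presumably why the paper preferred the cutoff-plus-Plancherel route, for which the only interpolation needed is the trivial one on all of $\R^\dmn$. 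One cosmetic slip: the hole-filling step with a $\tfrac12$-absorption actually yields $g(\rho)\leq\tfrac13 g(r)+\cdots$, not $\tfrac12 g(r)+\cdots$; since any factor strictly less than $1$ runs the iteration (after choosing $\tau$ with $\tfrac13\tau^{-2m}<1$), this does not affect the conclusion.
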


Recall that if we allow a term of the form $\varepsilon \doublebar{\nabla^m\vec u}_{L^2(B(x_0,R))}^2$ on the right-hand side, then this corollary was proven in \cite{AusQ00} in the homogeneous case $L\vec u=0$.

\begin{cor}\label{cor:Caccioppoli:Dirichlet}
Let $x_0\in\R^\dmn$ and let $R>0$, and let $\Omega\subset B(x_0,R)$.
Suppose that $L\vec u=\Div_m\mydot F$ in $\Omega$, for some operator~$L$ of order~$2m$ that satisfies the bounds \eqref{eqn:elliptic:bounded:2} and~\eqref{eqn:elliptic:AusQ00}, some $\vec u\in\dot W^2_m(\Omega)$, and some $\mydot F\in L^2(\Omega)$. Suppose in addition that $\vec u$ may be extended by zero to all of $B(x_0,R)$, in the sense of condition~\eqref{eqn:Caccioppoli:Dirichlet} of Lemma~\ref{lem:Caccioppoli:1}.

If $0<r<R$ and $0\leq j\leq m$, then
\begin{align*}
\int_{B(x_0,r)\cap\Omega} \abs{\nabla^j \vec u}^2 
\leq 
\frac{C}{(R-r)^{2j} } \int_{\Omega\cap B(x_0,R)} \abs{\vec u}^2
+ C R^{2m-2j}\int_\Omega \bigl(\abs{\mydot F}^2+\delta\abs{\vec u}^2\bigr)
,\\
\int_{B(x_0,r)\cap\Omega} \abs{\nabla^m \vec u}^2 
\leq 
\frac{C}{(R-r)^{2m} } \int_{\Omega\cap B(x_0,R)\setminus B(x_0,r)} \abs{\vec u}^2
+ C \int_\Omega \bigl(\abs{\mydot F}^2+\delta\abs{\vec u}^2\bigr)
.\end{align*}
\end{cor}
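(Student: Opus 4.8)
The approach is to bootstrap the family of weighted inequalities \eqref{eqn:Caccioppoli:1:1} using two classical tools: the interpolation inequality for intermediate derivatives, and a hole-filling iteration. First recall that on any ball $B(z,\rho)$, for $0\le i\le m$ and $\varepsilon\in(0,1]$,
\begin{equation*}
\Bigl(\int_{B(z,\rho)}\abs{\nabla^i\vec u}^2\Bigr)^{1/2}
\le C\varepsilon\,\rho^{m-i}\Bigl(\int_{B(z,\rho)}\abs{\nabla^m\vec u}^2\Bigr)^{1/2}
+C(\varepsilon)\,\rho^{-i}\Bigl(\int_{B(z,\rho)}\abs{\vec u}^2\Bigr)^{1/2}
\end{equation*}
with $C=C(\dmn,m)$ (trivial for $i=m$; for $i=0$ one may take $C(\varepsilon)=C$); this reduces to the case $\rho=1$ by scaling. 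Covering an annulus $B(x_0,s)\setminus B(x_0,\rho)$ of thickness $h=s-\rho\le\rho$ by finitely many balls of radius comparable to $h$ with bounded overlap, all contained in a slightly fattened annulus $\Omega^+$, and summing, promotes this to the annular version
\begin{equation*}
\int_{B(x_0,s)\setminus B(x_0,\rho)}\abs{\nabla^i\vec u}^2
\le C\varepsilon\,h^{2m-2i}\int_{\Omega^+}\abs{\nabla^m\vec u}^2
+C(\varepsilon)\,h^{-2i}\int_{\Omega^+}\abs{\vec u}^2,\qquad 0\le i\le m-1.
\end{equation*}

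For \eqref{eqn:Caccioppoli:m} I would argue as follows. Since $B(x_0,r)\subseteq B(x_0,(r+R)/2)$, it suffices to bound $\phi(\rho):=\int_{B(x_0,\rho)}\abs{\nabla^m\vec u}^2$ at $\rho=(r+R)/2$ with outer radius $R$. Given radii $(r+R)/2\le\rho<s<R$ (with the mild extra condition that the fattened annulus has outer radius at most $R$), apply \eqref{eqn:Caccioppoli:1:1} with inner radius $\rho$ and outer radius $s$, then apply the annular interpolation inequality to each term with $i\le m-1$; since $h^{-2i}\cdot h^{-(2m-2i)}=h^{-2m}$, the lower-order derivatives are eliminated, leaving $\phi(\rho)\le C_1\varepsilon\int_{\Omega^+}\abs{\nabla^m\vec u}^2+C(\varepsilon)(s-\rho)^{-2m}\int_{\Omega^+}\abs{\vec u}^2+F$ with $C_1=C_1(\dmn,m,C_0)$. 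For this range of $(\rho,s)$ one checks that $\Omega^+\subseteq B(x_0,R)\setminus B(x_0,r)$ and that $\int_{\Omega^+}\abs{\nabla^m\vec u}^2\le\phi(s')$ for an $s'$ slightly larger than $s$ but still $\le R$; after choosing $\varepsilon$ small enough that $\theta:=C_1\varepsilon<1$ this becomes a genuine hole-filling inequality $\phi(\rho)\le\theta\,\phi(s')+C(\varepsilon)(s'-\rho)^{-2m}\int_{B(x_0,R)\setminus B(x_0,r)}\abs{\vec u}^2+F$. Running the standard iteration lemma along a geometric sequence of radii from $(r+R)/2$ up to $R$ (with a fixed ratio chosen so that the extra condition holds at every step, which is possible precisely because $\theta$ is small) then yields \eqref{eqn:Caccioppoli:m}.

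For \eqref{eqn:Caccioppoli:lower}: the case $j=0$ is monotonicity of the integral, and the case $j=m$ follows from \eqref{eqn:Caccioppoli:m} since the annulus sits inside $B(x_0,R)$. For $0<j<m$, cover $B(x_0,r)$ by finitely many balls of radius $\ell:=(R-r)/2$ with bounded overlap, all contained in $B(x_0,(r+R)/2)$, apply the ball interpolation inequality (say with $\varepsilon=1$) on each and sum; then bound $\int_{B(x_0,(r+R)/2)}\abs{\nabla^m\vec u}^2$ by \eqref{eqn:Caccioppoli:m} with inner radius $(r+R)/2$ and outer radius $R$ (so that $R-(r+R)/2=\ell$), and use $\ell\le R$ to arrive at \eqref{eqn:Caccioppoli:lower}.

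The main obstacle, and the reason \eqref{eqn:Caccioppoli:m} is more than a routine iteration, is keeping the integral of $\abs{\vec u}^2$ over the \emph{annulus} $B(x_0,R)\setminus B(x_0,r)$ rather than over the full ball $B(x_0,R)$: one cannot start the iteration at $r$ (a ball covering the surrounding annulus would reach into $B(x_0,r)$), so it must begin at $(r+R)/2$, and one must check throughout the covering argument that the fattened annuli $\Omega^+$ never intersect $B(x_0,r)$ while their outer radii never exceed $R$. The remaining point is purely quantitative: the interpolation inequality must be used with the correct power of the length scale so that absorbing the $\nabla^m$ terms costs only the scale-free factor $C_1\varepsilon$, ensuring the iteration lemma applies with a genuine contraction factor $\theta<1$.
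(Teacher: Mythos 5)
Your argument is correct, and it follows the same two-stage structure as the paper (first obtain the weighted Caccioppoli inequality of Lemma~\ref{lem:Caccioppoli:1} for the zero-extension of $\vec u$, then bootstrap it to eliminate the intermediate derivatives), but the bootstrap itself is carried out by a genuinely different method. The paper's proof of Theorem~\ref{thm:Caccioppoli:2} works by downward induction on $k$: it establishes a ``claim'' bounding $\nabla^k\vec u$ for each $1\le k\le m$, the inductive step being driven by the single Plancherel-type inequality $\doublebar{\nabla f}_{L^2(\R^\dmn)}^2\leq C\doublebar{\nabla^2 f}_{L^2(\R^\dmn)}\doublebar{f}_{L^2(\R^\dmn)}$ applied to the compactly supported function $\varphi_j\nabla^{k-1}\vec u$, with an internal geometric iteration inside each induction step to absorb the top-order term. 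You instead invoke the full Ehrling/Gagliardo--Nirenberg interpolation inequality on balls as a black box, transfer it to annuli by covering, kill \emph{all} of the intermediate derivatives simultaneously, and finish with one application of the standard hole-filling iteration lemma. The two approaches are not far apart in spirit --- the paper is, in effect, re-deriving the needed interpolation from the two-derivative Plancherel bound --- but yours is shorter and more modular, at the cost of relying on a nontrivial Sobolev interpolation rather than building it in. One caveat you should tighten: in the statement of the ball interpolation the claim that the case $i=m$ is ``trivial'' is only true when $C\varepsilon\ge 1$, so it does not hold for small $\varepsilon$; this is harmless because you only ever use $i\le m-1$, but the parenthetical is misleading. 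Also, the constraint you sketch (``fixed ratio so that the extra condition holds at every step'') amounts to needing simultaneously $\tau\gtrsim 1/2$ so the fattened annuli stay inside $B(x_0,R)\setminus B(x_0,r)$ and $\theta<\tau^{2m}$ so the iteration series converges; both are achievable since $\theta=C_1\varepsilon$ can be made arbitrarily small, but this is the one place where the argument needs to be written out rather than gestured at. Finally, your reduction of the $0<j<m$ case to \eqref{eqn:Caccioppoli:m} via a ball covering with $\varepsilon=1$ and $\ell=(R-r)/2\le R$ is clean and matches the stated power $R^{2m-2j}$ on the $F$-term.
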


Our methods will not allow us to improve upon Lemma~\ref{lem:Caccioppoli:Neumann} in the case of Neumann boundary data.

\begin{proof}[Proof of Theorem~\ref{thm:Caccioppoli:2}]
Let $A(r,\zeta)$ denote either the annulus $B(x_0,r+\zeta)\setminus B(x_0,r-\zeta)$, or simply the ball $B(x_0,r+\zeta)$, depending on whether we are establishing the bound \eqref{eqn:Caccioppoli:m} on $\nabla^m \vec u$ or the bound \eqref{eqn:Caccioppoli:lower} on~$\nabla^k \vec u$.

Consider the following claim.

\textbf{Claim.} 
If $1\leq k\leq m$, and if $R/2<r<R$ and $0<\zeta<\min(R-r,r)$, then 
\begin{equation*}
\int_{A(r,\zeta)} \abs{\nabla^{k} \vec u}^2 \leq 
\sum_{i=0}^{k-1}\frac{C_k}{(\xi-\zeta)^{2k-2i} } \int_{A(r,\xi)} \abs{\nabla^{i}\vec u}^2 
+  R^{2m-2k}F
.\end{equation*}

If this claim is true for all such~$k$, then clearly the bound \eqref{eqn:Caccioppoli:lower} is valid. To establish the bound \eqref{eqn:Caccioppoli:m}, we combine the above claim with the assumed bound~\eqref{eqn:Caccioppoli:1:1}; it is this that allows us to bound $\nabla^m\vec u$ by the integral of $\abs{\vec u}^2$ over an annulus rather than a ball.

Thus we need only prove the claim. That the claim is true for $k=m$ follows by our assumption~\eqref{eqn:Caccioppoli:1:1}. We work by induction. Suppose that the claim is true for some $k+1<m$; we will show that it is valid for $k$ as well.

Let $A_j=A(r,\rho_j)$, where $\zeta=\rho_0<\rho_1<\dots<\xi$ for some sequence $\{\rho_j\}_{j=0}^\infty$ to be chosen momentarily. Let $\delta_j=\rho_{j+1}-\rho_j$, and let $\widetilde A_j=A(r,\rho_j+\delta_j/2)$, so $A_j\subset\widetilde A_j\subset A_{j+1}$. Let $\varphi_j$ be smooth, supported in $\widetilde A_j$, and identically equal to 1 in $A_j$; we may require that $\doublebar{\nabla\varphi_k}\leq C/\delta_j$ and $\doublebar{\nabla^2\varphi_k}\leq C/\delta_j^2$ for some absolute constant~$C$.

Now, for any $j\geq 0$,
\begin{align*}
\int_{A_j} \abs{\nabla^{k} \vec u}^2
&\leq\int_{\widetilde A_j} \abs{\nabla(\varphi_j\nabla^{k-1} \vec u)}^2
.\end{align*}
By Plancherel's theorem, if $f\in \dot W^2_2(\R^\dmn)\cap L^2(\R^\dmn)$ then \begin{equation*}\doublebar{\nabla f}_{L^2(\R^\dmn)}^2\leq C\doublebar{\nabla^2 f}_{L^2(\R^\dmn)} \doublebar{f}_{L^2(\R^\dmn)}.\end{equation*}
We will apply this inequality to $f=(\varphi_j\nabla^{k-1} \vec u)$; it is this step that fails in the case of Neumann boundary data. We have that
\begin{align*}
\int_{A_j} \abs{\nabla^{k} \vec u}^2
&\leq
	C\biggl(\int_{\widetilde A_j} \abs{\nabla^2(\varphi_j\nabla^{k-1} \vec u)}^2\biggr)^{1/2}
	\biggl(\int_{\widetilde A_j} \abs{\varphi_j\nabla^{k-1} \vec u}^2\biggr)^{1/2}
\\&\leq
	C\biggl(\int_{\widetilde A_j} 
	\abs{\nabla^{k+1}\vec  u}^2 
	+\frac{1}{\delta_j^2}\abs{\nabla^{k}\vec  u}^2
	+\frac{1}{\delta_j^4}\abs{\nabla^{k-1}\vec  u}^2\biggr)^{1/2}
	\biggl(\int_{\widetilde A_j} \abs{\nabla^{k-1}\vec  u}^2\biggr)^{1/2}\!
.\end{align*}
Applying the claim to bound $\abs{\nabla^{k+1}\vec u}^2$, we see that
\begin{align*}
\int_{A_j} \abs{\nabla^{k}\vec  u}^2
&\leq
	\biggl(
	\sum_{i=0}^{k} \frac{ C_k}{\delta_j^{2k+2-2i} } \int_{A_{j+1}}\abs{\nabla^{i}\vec u}^2 
	+CR^{2m-2k-2}F
	\biggr)^{1/2}
	\biggl(\int_{\widetilde A_j} \abs{\nabla^{k-1} \vec u}^2\biggr)^{1/2}
.\end{align*}
We move a factor of $C_k/\delta_j^2$ from the first term to the second, and then use the inequality $\sqrt a\sqrt b \leq (1/2)a + (1/2)b$ to see that
\begin{align*}
\int_{A_j} \abs{\nabla^{k}\vec  u}^2
&\leq
	\frac{1}{2}\sum_{i=0}^{k}\frac{1}{\delta_j^{2k-2i} } \int_{A_{j+1}} \abs{\nabla^{i}\vec u}^2 
	+\frac{1}{2}R^{2m-2k}F
	+
	\frac{C_k}{\delta_j^2}\int_{\widetilde A_j} \abs{\nabla^{k-1} u}^2
.\end{align*}
Separating out the term $i=k$, we see that
\begin{align*}
\int_{A_j} \abs{\nabla^{k} \vec u}^2
&\leq
	C_k\sum_{i=0}^{k-1}\frac{1}{\delta_j^{2k-2i} } \int_{A_{j+1}} \abs{\nabla^{i}\vec u}^2 
	+\frac{1}{2}R^{2m-2k}F
	+\frac{1}{2}\int_{A_{j+1}} \abs{\nabla^k\vec u}^2 	
.\end{align*}
This bound is valid for all $j>0$. We may iterate to see that
\begin{align*}
\int_{A_0} \abs{\nabla^{k} \vec u}^2
&\leq
	\sum_{j=1}^\infty 2^{-(j-1)}
	\biggl(
	C_k\sum_{i=0}^{k-1}\frac{1}{\delta_j^{2k-2i} } \int_{A_{j}} \abs{\nabla^{i}\vec u}^2 
	+\frac{1}{2}R^{2m-2k}F\biggr)
\\&\leq
	C_1\sum_{i=0}^{k-1}
	\biggl(\sum_{j=1}^\infty 2^{-(j-1)}\frac{1}{\delta_j^{2k-2i} } \biggr) \int_{A_\infty} \abs{\nabla^{i}\vec u}^2 
	+R^{2m-2k}F
.\end{align*}
Now, choose $\rho_j=\zeta+(\xi-\zeta)(1-\tau)\sum_{i=1}^j\tau^i$ for some $0<\tau<1$. Then $\rho_0=\zeta$ and $\lim_{j\to \infty} \rho_j=\xi$. So
\begin{align*}
\int_{A_0} \abs{\nabla^{k} \vec u}^2
&\leq
	C_{k,\tau}\sum_{i=0}^{k-1}
	\biggl(\sum_{j=1}^\infty 
	\frac{1}{(2\tau^{2k-2i})^j}
	\frac{1}{(\xi-\zeta)^{2k-2i} } \biggr) \int_{A_\infty} \abs{\nabla^{i}\vec u}^2 
	+R^{2m-2k}F
.\end{align*}
Choosing $\tau$ so that $2\tau^{2k}>1$ and $\tau<1$, we see that the sum in $j$ converges and the proof is complete.
\end{proof}

\section{Meyers's reverse H\"older inequality for gradients}
\label{sec:Meyers}

In this section we will generalize Meyers's reverse H\"older inequality \eqref{eqn:Meyers:2} to the higher-order case. We will use many of the techniques of the second-order case. The interior and Dirichlet boundary versions of this inequality are stated in the following theorem; the Neumann boundary version is stated below in Theorem~\ref{thm:Meyers:Neumann}.

\begin{thm}
\label{thm:Meyers} 
Let $L$ be an operator of order~$2m$ that satisfies the bounds \eqref{eqn:elliptic:bounded:2} and~\eqref{eqn:elliptic:AusQ00}. Let $c_\Omega>0$.
Then there is some number $p^+=p^+_L>2$ depending only on the standard constants and the number $c_\Omega$ such that the following statement is true.

Let $x_0\in\R^\dmn$ and let $R>0$.
Suppose that $\vec u\in \dot W^2_m(B(x_0,R))$, that $\mydot F\in L^2(B(x_0,R))$, and that either
\begin{description}
\eqitem $L\vec u=\Div_m\mydot F$ in $\Omega=B(x_0,R)$, or
\eqitem $L\vec u=\Div_m\mydot F$ in some domain $\Omega\subsetneq B(x_0,R)$, and $\vec u$ lies in the closure in $\dot W^2_m(B(x_0,R))$ of $\{\vec\varphi\in C^\infty(\R^\dmn):\vec\varphi\equiv 0 $ in $B(x_0,R)\setminus\Omega\}$.
Furthermore, if $x\in\partial\Omega$ and $\rho>0$, then $\abs{B(x_0,\rho)\setminus\Omega}\geq c_\Omega\rho^\dmn$, where $\abs{E}$ denotes the Lebesgue measure of~$E$.
\end{description}

Suppose that $0<p\leq 2<q<p^+$. Then
\begin{align}
\label{eqn:Meyers}
\biggl(\int_{B(x_0,r)\cap\Omega}\abs{\nabla^m \vec u}^q\biggr)^{1/q}
&\leq 
	\frac{C(c_\Omega,p,q)}{(R-r)^{\pdmn/p-\pdmn/q}}
	\biggl(\int_{\Omega}\abs{\nabla^m \vec u}^p\biggr)^{1/p} 
	\\&\qquad\nonumber
	+ C(c_\Omega,p,q) \biggl(\int_{\Omega}\abs{\mydot F}^q+\delta^{q/2}\abs{\vec u}^q\biggr)^{1/q}
\end{align}
for some constant $C(c_\Omega,p,q)$ depending only on $p$, $q$, $c_\Omega$ and the standard parameters.

We may also bound the lower-order derivatives. Suppose that $m-\pdmn/2 < m-k < m$ and that $0\leq m-k$.
Let $0<p\leq 2\leq q<\min(p^+_L,\pdmn/k)$. Then 
\begin{align}\label{eqn:Meyers:lower}
\biggl(\int_{B(x_0,r)\cap\Omega}\abs{\nabla^{m-k} \vec u}^{q_k}\biggr)^{1/{q_k}}
&\leq 
	\frac{C(c_\Omega,p,q)}{(R-r)^{\pdmn/p_k-\pdmn/{q_k}}}
	\biggl(\int_{\Omega}\abs{\nabla^{m-k} \vec u}^{p_k}\biggr)^{1/p_k} 
	\\&\qquad\nonumber
	+ C(c_\Omega,p,q) R^{k}\biggl(\int_{\Omega}\abs{\mydot F}^{q}+\delta^{q/2}\abs{\vec u}^q\biggr)^{1/q}
\end{align}
where $q_k=q\,\pdmn/(\dmn-k\,q)$ and  $p_k=p\,\pdmn/(\dmn-k\,p)$. (Notice that the condition $0<p\leq 2\leq q<\min(p^+_L,\pdmn/k)$ is equivalent to the condition $0<p_k\leq 2_k\leq q_k<p^+_k$, where $2_k=2\,\pdmn/(\dmn-2k)$ and  $p^+_k=p^+_L\,\pdmn/(\dmn-k\,p^+_L)$ if $\dmn>k\,p^+_L$ and $p^+_k=\infty$ if $\dmn\leq k\,p^+_L$.)

Finally, if $0\leq m-k \leq m-\pdmn/2$ and $0<p<\infty$, then $\nabla^{m-k}\vec u$ is H\"older continuous and satisfies the bound
\begin{align}\label{eqn:Meyers:lowest}
\sup_{B(x_0,r)\cap\Omega} \abs{\nabla^{m-k} \vec u} 
&\leq 
	\frac{C(p,q)}{(R-r)^{\pdmn/p}}
	\biggl(\int_{\Omega}\abs{\nabla^{m-k} \vec u}^p\biggr)^{1/p} 
	\\&\qquad\nonumber
	+ C(p,q) R^{k-\pdmn/q}\biggl(\int_{\Omega}\abs{\mydot F}^{q}+\delta^{q/2}\abs{\vec u}^q\biggr)^{1/q}
\end{align}
provided that $0<p\leq \infty$ and that either $q\geq 2$ and $k>\pdmn/2$ or $q>2$ and $k\geq\pdmn/2$.
\end{thm}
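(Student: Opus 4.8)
The plan is to treat the top-order estimate \eqref{eqn:Meyers} first, and then deduce the lower-order bounds \eqref{eqn:Meyers:lower} and \eqref{eqn:Meyers:lowest} from it by combining with the Caccioppoli inequality (Theorem~\ref{thm:Caccioppoli:2}, Corollaries~\ref{cor:Caccioppoli:interior} and~\ref{cor:Caccioppoli:Dirichlet}) and Sobolev/Morrey embedding. For \eqref{eqn:Meyers}, the core is a self-improvement argument in the style of Gehring--Giaquinta--Modica. First I would reduce to establishing a weak reverse Hölder inequality at exponent~$2$: starting from the Caccioppoli bound on $\nabla^m \vec u$ over a ball $B(x,s)$ in terms of $\vec u$ over $B(x,2s)$, I would apply the higher-order Sobolev--Poincaré inequality to replace $\vec u$ by $\nabla^m \vec u$ at the lower exponent $2_*=2\dmn/(\dmn+2m)$ (subtracting an appropriate polynomial of degree $m-1$, using that $\vec u$ is only defined modulo such polynomials in the interior case, and using the measure-density hypothesis $\abs{B(x_0,\rho)\setminus\Omega}\geq c_\Omega\rho^\dmn$ in the boundary case to control the polynomial via a Poincaré inequality on a domain meeting a fixed-density complement). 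This yields
\[
\biggl(\fint_{B(x,s)}\abs{\nabla^m\vec u}^2\biggr)^{1/2}
\leq C\biggl(\fint_{B(x,2s)}\abs{\nabla^m\vec u}^{2_*}\biggr)^{1/{2_*}}
+ C\biggl(\fint_{B(x,2s)}\abs{\mydot F}^2 + \delta^{?}\abs{\vec u}^?\biggr)^{\cdots},
\]
i.e. a reverse Hölder inequality with a gap in the exponents and an inhomogeneous term. The $\delta\abs{\vec u}^2$ contribution from the weak Gårding inequality can be absorbed or treated as part of the inhomogeneous datum exactly as in Corollary~\ref{cor:Caccioppoli:interior}; to handle the resulting lower-order $\vec u$-term uniformly one rescales so that balls have radius comparable to~$R$ and absorbs factors of $R$ into the constants.

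The second step is to invoke the classical Gehring-type self-improvement lemma (in the form of Giaquinta--Modica, valid on balls up to the boundary under the stated measure-density condition): a function satisfying such a weak reverse Hölder inequality at exponent $2$ with an $L^q$ inhomogeneous term for every $q<q_0$ automatically lies in $L^p_{loc}$ for some $p=p^+_L>2$, with the quantitative estimate \eqref{eqn:Meyers}. This produces the exponent $p^+_L$ depending only on the structural constants and $c_\Omega$, and the passage from the $\fint$-normalized local estimate to the stated scale-invariant form \eqref{eqn:Meyers} with the factor $(R-r)^{-(\dmn/p-\dmn/q)}$ is a standard covering/iteration argument (cover $B(x_0,r)$ by balls of radius $\sim(R-r)$, sum, and use Hölder to pass between the various admissible $p\leq 2\leq q<p^+_L$). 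I would also record here that \eqref{eqn:Meyers} holds with $\mydot F$ replaced by $\nabla^m\vec u$ on the left in the homogeneous case, which is all that is needed downstream.

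For the lower-order estimates, I would argue as follows. Given $k$ with $m-\dmn/2<m-k<m$, the function $\nabla^{m-k}\vec u$ has its $k$-th derivatives controlled in $L^p$ for $p<p^+_L$ by the top-order estimate just proved; by the Sobolev embedding $\dot W^p_k\hookrightarrow L^{p_k}$ with $p_k=p\dmn/(\dmn-kp)$, applied on balls of radius $\sim (R-r)$ after subtracting the correct Taylor polynomial of $\nabla^{m-k}\vec u$ (again the measure-density hypothesis lets us pin down the polynomial at the boundary), one converts \eqref{eqn:Meyers} into \eqref{eqn:Meyers:lower}; the bookkeeping that $0<p\leq 2\leq q<\min(p^+_L,\dmn/k)$ matches $0<p_k\leq 2_k\leq q_k<p^+_k$ is exactly the algebraic identity already noted in the statement, so this step is essentially a change of variables in the exponents plus Caccioppoli to reinstate $\vec u$ on the right-hand side (contributing the factor $R^k$). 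When $0\leq m-k\leq m-\dmn/2$, i.e. $k\geq\dmn/2$, one more application of Caccioppoli gives $\nabla^{m-k}\vec u\in\dot W^{\widetilde p}_{j}$ for some $j$ with $\widetilde p\, j>\dmn$, so Morrey's inequality yields Hölder continuity and the sup bound \eqref{eqn:Meyers:lowest}; the two alternative hypotheses ($q\geq2$, $k>\dmn/2$ versus $q>2$, $k\geq\dmn/2$) are precisely what is needed to make the relevant Sobolev exponent strictly supercritical while keeping the inhomogeneous term controlled in $L^q$.

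The main obstacle I anticipate is the boundary (Dirichlet) case of the weak reverse Hölder inequality: one must run the Sobolev--Poincaré step for the function $\vec u$ (extended by zero) on balls that straddle $\partial\Omega$, subtracting a polynomial of degree $m-1$ that is forced to be small because $\vec u\equiv0$ on a set of measure $\geq c_\Omega\rho^\dmn$ — this requires a Poincaré-type inequality for higher-order derivatives on such ``fat-complement'' balls, together with care that the extension-by-zero stays in $\dot W^2_m$. The interior case is routine given Theorem~\ref{thm:Caccioppoli:2}; the Gehring lemma is off-the-shelf; and the lower-order deductions are mechanical once the top-order estimate and the Caccioppoli inequality are in hand.
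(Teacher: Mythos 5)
Your proposal follows essentially the same route as the paper: Caccioppoli to drop the order, Sobolev--Poincar\'e (with the complement's measure-density used to control the Poincar\'e constant on balls straddling $\partial\Omega$) to produce a weak reverse H\"older inequality with a gap, Giaquinta's version of the Gehring lemma plus a covering argument to raise the exponent past $2$, a self-improvement lemma to push $p$ below $2$, and then Gagliardo--Nirenberg--Sobolev resp.\ Morrey combined again with Caccioppoli for the lower-order bounds. The only stylistic difference is that the paper closes the gap with a single one-derivative GNS step (exponent $2\dmn/(\dmn+2)$) rather than the full $m$-derivative embedding at $2\dmn/(\dmn+2m)$; both work, and everything else matches.
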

Of course if $p>q$, then we may use H\"older's inequality to bound $\doublebar{\nabla^{m-k}\vec u}_{L^q}$ by $\doublebar{\nabla^{m-k}\vec u}_{L^p}$; however, we then no longer have the coefficient $(R-r)^{\pdmn/q-\pdmn/p}$.
In the interior case $\Omega=B(x_0,R)$, the bound \eqref{eqn:Meyers} with $p=2$ was proven in \cite{AusQ00} in the homogeneous case $L\vec u=0$, and in \cite{Cam80} under the strong pointwise G\r{a}rding inequality; the lower-order bounds \eqref{eqn:Meyers:lower} and \eqref{eqn:Meyers:lowest} are relatively straightforward consequences of the bound \eqref{eqn:Meyers} but it will be convenient later to have them stated explicitly.

We will prove Theorem~\ref{thm:Meyers} as in the second-order case; we will need the following lemmas. The first two given lemmas are standard in the theory of Sobolev spaces; see, for example, \cite[Section~5.6.3]{Eva98}.

\begin{lem}\label{lem:GNS:ball}\textup{(The Gagliardo-Nirenberg-Sobolev inequality in balls).}
Let $x_0\in\R^\dmn$ and let $\rho>0$.
Suppose that $1\leq q<\dmn$, that $1\leq k<\pdmn/q$, and that $\nabla^k v\in L^q(B(x_0,\rho))$. Let $q_k=q\,\pdmn/(\dmn-k\,q)$. 

Then $v\in L^{q_k}(B(x_0,\rho))$. More precisely,
\begin{equation*}\doublebar{v}_{L^{q_k}(B(x_0,\rho))} \leq C(q,k) \sum_{i=0}^k \rho^{i-k} \doublebar{\nabla^i v}_{L^{q}(B(x_0,\rho))}.\end{equation*}
\end{lem}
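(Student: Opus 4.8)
The plan is to reduce the statement on a ball to the classical Gagliardo-Nirenberg-Sobolev inequality on all of $\R^\dmn$ by an extension argument, then track the scaling. First I would rescale: writing $v_\rho(x)=v(x_0+\rho x)$, it suffices to prove the case $x_0=0$, $\rho=1$, since each side transforms homogeneously and the powers of $\rho$ in the statement are exactly those needed to make both sides scale the same way. (Concretely, $\doublebar{v}_{L^{q_k}(B(0,1))}$ corresponds to $\rho^{-\dmn/q_k}\doublebar{v}_{L^{q_k}(B(x_0,\rho))}$ and $\doublebar{\nabla^i v}_{L^q(B(0,1))}$ to $\rho^{i-\dmn/q}\doublebar{\nabla^i v}_{L^q(B(x_0,\rho))}$, and $\dmn/q_k = \dmn/q - k$.)

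For the unit ball, the main step is to produce an extension operator $E\colon \dot W^q_k(B(0,1))\cap\bigcap_{i<k}W^{q}_{i}\to W^q_k(\R^\dmn)$, or more simply to use a fixed cutoff: let $\eta\in C_c^\infty(B(0,2))$ with $\eta\equiv 1$ on $B(0,1)$, and apply the Sobolev embedding $W^q_k(\R^\dmn)\hookrightarrow L^{q_k}(\R^\dmn)$ (valid since $1\le q<\dmn$ and $kq<\dmn$, which gives the chain of embeddings $W^q_k\hookrightarrow W^{q_1}_{k-1}\hookrightarrow\cdots\hookrightarrow L^{q_k}$) to the function $\eta\,\widetilde v$, where $\widetilde v$ is a Sobolev extension of $v$ to $\R^\dmn$ (here I invoke the Calderón-Stein extension theorem for the Lipschitz domain $B(0,1)$, exactly as the paper does for Lipschitz domains elsewhere). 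Then $\doublebar{v}_{L^{q_k}(B(0,1))}\le\doublebar{\eta\widetilde v}_{L^{q_k}(\R^\dmn)}\le C\doublebar{\eta\widetilde v}_{W^q_k(\R^\dmn)}$, and expanding $\nabla^k(\eta\widetilde v)$ by the Leibniz rule and using boundedness of the extension operator bounds this by $C\sum_{i=0}^k\doublebar{\nabla^i v}_{L^q(B(0,1))}$, since all derivatives of $\eta$ are bounded by absolute constants. Undoing the rescaling inserts exactly the factors $\rho^{i-k}$, giving the claimed inequality.

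Alternatively, one can avoid the extension theorem entirely and argue on balls directly: on the ball $B(0,1)$ one has the Sobolev-Poincaré-type inequality $\doublebar{v - P}_{L^{q_1}(B(0,1))}\le C\doublebar{\nabla v}_{L^q(B(0,1))}$ for an appropriate averaged polynomial $P$ of degree $0$ (i.e.\ a constant), and iterating $k$ times—at each stage lowering the order of derivative by one and raising the Lebesgue exponent from $q_j$ to $q_{j+1}=q_j\dmn/(\dmn-q_j)$—yields the bound with the lower-order terms $\doublebar{\nabla^i v}_{L^q}$ appearing because each Poincaré step contributes the subtracted polynomial, whose coefficients are controlled by averages of the relevant derivative over $B(0,1)$. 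I expect the main obstacle to be purely bookkeeping: making sure the intermediate exponents $q_j$ stay in the admissible range $1\le q_j<\dmn$ at every step of the iteration (which follows from $kq<\dmn$), and verifying that the polynomial corrections at each stage really are dominated by $\sum_{i} \doublebar{\nabla^i v}_{L^q(B(0,1))}$ with constants depending only on $q$, $k$, $\dmn$. The scaling check is routine but must be done carefully since it is the only place $\rho$ enters.
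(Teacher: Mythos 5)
The paper does not prove this lemma at all: it cites it as standard, pointing to \cite[Section~5.6.3]{Eva98}, so there is no ``paper's own proof'' to compare against. Your first argument — rescale to the unit ball, extend to $\R^\dmn$ via Calder\'on--Stein, cut off, iterate the first-order Gagliardo--Nirenberg--Sobolev embedding $W^q_k(\R^\dmn)\hookrightarrow W^{q_1}_{k-1}(\R^\dmn)\hookrightarrow\cdots\hookrightarrow L^{q_k}(\R^\dmn)$, and then undo the rescaling — is exactly the textbook argument the paper's citation refers to, and your scaling bookkeeping ($\dmn/q_k=\dmn/q-k$, hence the factor $\rho^{i-k}$) is correct. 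The alternative you sketch, iterating the Sobolev--Poincar\'e inequality on the ball directly with a constant-polynomial correction at each step, is also a valid standard route; its advantage is that it avoids invoking an extension operator, at the cost of the exponent bookkeeping you flag (one needs $q_j=q\,\pdmn/(\dmn-jq)<\dmn$ for $j<k$, which holds since $kq<\dmn$). Either argument is complete; no gaps.
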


\begin{lem}\label{lem:Morrey}\textup{(Morrey's inequality).}
Suppose that $1\leq q\leq\infty$, that $k>\pdmn/q$, and that $\nabla^k v\in L^q(B(x_0,\rho))$ for some ball $B(x_0,\rho)\subset\R^\dmn$.

Then $v$ is H\"older continuous in~$B(x_0,\rho)$. Furthermore, $v$ satisfies the local bound
\begin{equation*}\doublebar{v}_{L^\infty(B(x_0,\rho))} \leq C(q,k) \sum_{i=0}^k \rho^{i-k} \doublebar{\nabla^i v}_{L^{q}(B(x_0,\rho))}.\end{equation*}
\end{lem}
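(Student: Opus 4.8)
The plan is to bootstrap on the order of differentiation, so as to reduce the statement to the Gagliardo--Nirenberg--Sobolev estimate of Lemma~\ref{lem:GNS:ball} and to the classical first-order Morrey inequality on a ball. First I would note that we may assume $\nabla^i v\in L^q(B(x_0,\rho))$ for every $0\leq i\leq k$, since otherwise the right-hand side of the asserted inequality is infinite and there is nothing to prove. The estimate will then follow from a finite chain of improvements, at each stage keeping track of the powers of $\rho$; one could alternatively dilate first so as to work on a ball of fixed radius, but this merely rearranges that bookkeeping.

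Let $\ell$ be the integer part of $\pdmn/q$, after first decreasing $q$ slightly if necessary (which is legitimate because $B(x_0,\rho)$ has finite measure and $kq>\pdmn$ is a strict inequality) so that $\pdmn/q$ is not an integer; then $\ell q<\pdmn<(\ell+1)q$ and $0\leq\ell\leq k-1$. If $\ell\geq 1$ I would apply Lemma~\ref{lem:GNS:ball} to $\nabla^{k-\ell}v$ with the jump~$\ell$, and likewise to $\nabla^{k-\ell-1}v$, to conclude that both functions lie in $L^{q_\ell}(B(x_0,\rho))$ with $q_\ell=q\,\pdmn/(\pdmn-\ell q)>\pdmn$, their norms being controlled by sums of the form $\sum_{j}\rho^{j-k}\doublebar{\nabla^j v}_{L^q(B(x_0,\rho))}$ with $0\leq j\leq k$; when $\ell=0$, that is when $q>\pdmn$, this step is vacuous. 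Now $\nabla^{k-\ell}v\in L^{q_\ell}(B(x_0,\rho))$ with $q_\ell>\pdmn$, so the classical first-order Morrey inequality applied to $\nabla^{k-\ell-1}v$ shows that $\nabla^{k-\ell-1}v$ is H\"older continuous in $B(x_0,\rho)$ and that $\doublebar{\nabla^{k-\ell-1}v}_{L^\infty(B(x_0,\rho))}$ is at most $C\rho^{-\pdmn/q_\ell}\doublebar{\nabla^{k-\ell-1}v}_{L^{q_\ell}(B(x_0,\rho))}+C\rho^{1-\pdmn/q_\ell}\doublebar{\nabla^{k-\ell}v}_{L^{q_\ell}(B(x_0,\rho))}$. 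Finally I would descend in the order of differentiation: for $j=k-\ell-1,k-\ell-2,\dots,0$, once $\nabla^{j+1}v\in L^\infty(B(x_0,\rho))$ is known, convexity of the ball shows that $\nabla^j v$ is Lipschitz on $B(x_0,\rho)$ with $\doublebar{\nabla^j v}_{L^\infty(B(x_0,\rho))}\leq \fint_{B(x_0,\rho)}\abs{\nabla^j v}+C\rho\,\doublebar{\nabla^{j+1}v}_{L^\infty(B(x_0,\rho))}$; bounding the average by $C\rho^{-\pdmn/q}\doublebar{\nabla^j v}_{L^q(B(x_0,\rho))}$ using H\"older's inequality and collecting the contributions yields the asserted estimate for $\doublebar{v}_{L^\infty(B(x_0,\rho))}$. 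H\"older continuity of $v$ is then immediate: if $k-\ell-1=0$ it is part of the Morrey step, and if $k-\ell-1\geq 1$ the descent gives $\nabla v\in L^\infty(B(x_0,\rho))$, so $v$ is locally Lipschitz in the open ball.

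I expect the main obstacle to be bookkeeping rather than analysis. One must check that iterating Lemma~\ref{lem:GNS:ball} introduces only derivatives of order at most~$k$ on the right-hand side, that the powers of $\rho$ accumulated at the various steps combine into the weights $\rho^{i-k}$, and that the borderline integrability exponents are disposed of by the slight decrease of~$q$ noted above. Since every ingredient is classical, one could equally well quote the Sobolev embedding $W^{k,q}(B)\hookrightarrow C^{0,\gamma}(B)$ from a standard reference such as \cite{Eva98} and recover the dependence on~$\rho$ by a dilation argument; I would choose whichever presentation is more consistent with the rest of the paper.
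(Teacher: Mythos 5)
The paper does not give a proof of this lemma; it is cited as standard (\cite[Section~5.6.3]{Eva98}). Your bootstrap --- iterate Lemma~\ref{lem:GNS:ball} until the integrability exponent clears $\dmn$, apply the classical first-order Morrey inequality once, then descend through lower-order derivatives by the convexity/mean-value argument --- is the standard proof, and it is sound. One small borderline issue: when $q=1$ there is no room to decrease $q$ below $1$, so your device for avoiding an integer value of $\dmn/q$ is unavailable in exactly the case $q=1$, $\dmn/q=\dmn$; that endpoint instead needs a slightly different step (for instance, that $W^{1,\dmn}(B)$ embeds into $L^r(B)$ for all finite $r$, giving one extra integrability to feed into the Morrey step). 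This is a minor patch and does not affect the structure of your argument.

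A more substantive observation: if you actually carry out the bookkeeping you defer, the weights combine to $\rho^{i-\dmn/q}$, not $\rho^{i-k}$. The applications of Lemma~\ref{lem:GNS:ball} produce weights $\rho^{j-(\text{top order})}$, the first-order Morrey step contributes $\rho^{-\dmn/q_\ell}$ and $\rho^{1-\dmn/q_\ell}$ with $\dmn/q_\ell=\dmn/q-\ell$, and each descent step contributes $\rho^{-\dmn/q}$ from the H\"older bound on the average; collecting these gives $\rho^{j-\dmn/q}$ on $\doublebar{\nabla^j v}_{L^q}$. This is in fact the dimensionally correct exponent: testing the inequality as stated on $v\equiv 1$ gives $1 \leq C\rho^{\dmn/q-k}\to 0$ as $\rho\to\infty$, since $k>\dmn/q$, which is impossible. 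So the exponent $\rho^{i-k}$ in the statement of Lemma~\ref{lem:Morrey} appears to be a misprint for $\rho^{i-\dmn/q}$ (in Lemma~\ref{lem:GNS:ball} the analogous exponent $\rho^{i-k}$ does scale correctly because $\dmn/q_k=\dmn/q-k$ there, but for $L^\infty$ one has $\dmn/\infty=0$ and the two exponents part ways). The corrected exponent is also what is actually used when Lemma~\ref{lem:Morrey} is invoked with $q=2$ in the proof of Lemma~\ref{lem:high:Morrey:bound}: after converting the averaged integrals to $L^2$ norms, the weights that appear there are $R^{i-\dmn/2}$, not $R^{i-m}$. Your proof, properly bookkept, establishes the corrected statement.
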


The next lemma comes from the book \cite{Gia83}, where it was used for a relatively straightforward proof of Theorem~\ref{thm:Meyers} in the second-order case.

\begin{lem}\textup{(\cite[Chapter~V, Theorem~1.2]{Gia83}).}\label{lem:Giaquinta}
Let $Q\subset\R^\dmn$ be a cube and let $g$ and $f$ be two nonnegative, locally integrable functions defined on~$Q$. Suppose that, for any $x\in B$, we have that
\begin{equation*}\sup_{0<r<\dist(x,\partial Q)/2} \fint_{B(x,r)} g^p
\leq b \biggl(\sup_{0<r} \fint_{B(x,r)} g\biggl)^p + 
\sup_{0<r} \fint_{B(x,r)} f^p\end{equation*}
for some constant~$b>0$ and some $p>1$. Then there is some $\varepsilon>0$ depending only on $b$, $p$ and the dimension $\dmn$, such that if $p<q<p+\varepsilon$ and $f\in L^p(B(x_0,R))$, then 
\begin{equation*}\biggl(\fint_{(1/2)Q} g^q\biggr)^{1/q}
\leq C(b,p,q)\biggl(\fint_Q g^p\biggr)^{1/p}
+C(b,p,q)\biggl(\fint_Q f^q\biggr)^{1/q}
\end{equation*}
where $(1/2)Q$ is the cube concentric to $Q$ with side-length half that of~$Q$.
\end{lem}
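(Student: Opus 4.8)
The statement is Gehring's lemma on the self-improvement of reverse H\"older inequalities; it is exactly \cite[Chapter~V, Theorem~1.2]{Gia83}, so strictly speaking nothing is to be proved and one would simply cite it. If one wished to reproduce the argument, the plan would be the following. (Note that ``$x\in B$'' should be read ``$x\in Q$'', and that the hypothesis for the conclusion is $f\in L^q(Q)$.) Write $h=g^p$, let $\mathcal M$ be the uncentered Hardy--Littlewood maximal operator over balls contained in $Q$, and let $\mathcal M'$ be its restriction to balls $B(x,r)$ with $r<\dist(x,\partial Q)/2$; then the hypothesis reads $\mathcal M'h\le b(\mathcal M g)^p+\mathcal M(f^p)$ pointwise on $Q$. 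Since $h\le\mathcal M h$ a.e., and on $(1/2)Q$ one has $\mathcal M h\le\mathcal M'h+C\fint_Qh$ a.e.\ (the balls about a point of $(1/2)Q$ that are inadmissible for $\mathcal M'$ have radius comparable to $\diam Q$, so their averages are controlled by $\fint_Qh$), it suffices to bound $\int_{(1/2)Q}(\mathcal M'h)^{q/p}$. Because $\fint_Qh<\infty$, I would fix a threshold $t_0\simeq\fint_Qh+\fint_Qf^p$ (with the implicit constant dimensional) so that, for every $t\ge t_0$, the cubes of the dyadic Calder\'on--Zygmund stopping-time decomposition $\{Q_j\}$ of $h$ at level $t$ are so small that each $Q_j$ meeting $(1/2)Q$ is compactly contained in $Q$; these $Q_j$ are pairwise disjoint, satisfy $t<\fint_{Q_j}h\le 2^\dmn t$, and $h\le t$ a.e.\ off $\bigcup_jQ_j$.

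The crucial step is to extract a distributional inequality from this decomposition. On a cube $Q_j\subset\subset Q$ the reverse H\"older hypothesis is applicable, since a ball of radius comparable to the side length of $Q_j$ and centred at its centre is admissible in $\mathcal M'$ (up to a harmless ball-versus-cube dilation). Splitting the cubes into those with $\fint_{Q_j}f^p<\tfrac12\fint_{Q_j}h$, on which the hypothesis gives $\tfrac12\fint_{Q_j}h\le b(\fint_{Q_j}g)^p$ and hence $\fint_{Q_j}g\ge c\,t^{1/p}$ with $c=c(b,p,\dmn)$, and those with $\fint_{Q_j}f^p\ge\tfrac12\fint_{Q_j}h$, which satisfy $\int_{Q_j}h\le 2\int_{Q_j}f^p$ and contribute (after a companion stopping-time decomposition of $f^p$) a term $\le C\int_{\{f^p>ct\}}f^p$, one treats the first type as follows: $\fint_{Q_j}g\ge c\,t^{1/p}$ forces $\abs{Q_j}\le c^{-1}t^{-1/p}\int_{Q_j}g$, so
\begin{equation*}
\int_{Q_j}g^p\le 2^\dmn t\abs{Q_j}\le C\,t^{(p-1)/p}\int_{Q_j}g\le C\,t^{(p-1)/p}\Bigl(\int_{Q_j\cap\{g>\beta t^{1/p}\}}g+\beta t^{1/p}\abs{Q_j}\Bigr),
\end{equation*}
and since $t\abs{Q_j}\le\int_{Q_j}g^p$, a small enough choice $\beta=\beta(b,p,\dmn)$ lets the last term be absorbed on the left. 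Summing over $j$ and using $\{h>t\}\cap(1/2)Q\subseteq\bigcup_jQ_j$ up to a null set, I would obtain, for all $t\ge t_0$,
\begin{equation*}
\int_{\{h>t\}\cap(1/2)Q}h\,dx\le C\,t^{1-1/p}\int_{\{g>\beta t^{1/p}\}}g\,dx+C\int_{\{f^p>\beta t\}}f^p\,dx.
\end{equation*}

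The final step is to multiply this inequality by $t^{q/p-2}$ and integrate $t$ over $(t_0,\infty)$. By Fubini the left-hand side equals $\frac{p}{q-p}\int_{\{h>t_0\}\cap(1/2)Q}h^{q/p}$ minus a term of the form $\frac{p}{q-p}t_0^{q/p-1}\int_Qh$; the first term on the right contributes a constant multiple of $\frac{p}{q-1}\beta^{-(q-1)}\int_Qh^{q/p}$, and the second a constant multiple of $\frac{p}{q-p}\beta^{1-q/p}\int_Qf^q$. Multiplying through by $\frac{q-p}{p}$, the coefficient of $\int h^{q/p}$ on the right becomes $C\frac{q-p}{q-1}\beta^{-(q-1)}$, which tends to $0$ as $q\to p^+$; hence for $q-p=:\varepsilon$ small (depending only on $b$, $p$ and $\dmn$, through $C$ and $\beta$) this term may be absorbed on the left. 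Then, adding the trivial bound $\int_{\{h\le t_0\}}h^{q/p}\le t_0^{q/p-1}\int_Qh$, using Jensen's inequality $(\fint_Qf^p)^{q/p}\le\fint_Qf^q$ together with the choice of $t_0$, and dividing by $\abs Q$, I would arrive at $\fint_{(1/2)Q}h^{q/p}\le C(\fint_Qh)^{q/p}+C\fint_Qf^q$; taking $q$th roots and recalling $h=g^p$ gives the claim, with $C(b,p,q)\to\infty$ as $q\to p^+$.

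The hard part is the second displayed inequality above: the stopping-time decomposition must be organised so that every selected cube sits compactly inside $Q$ (otherwise the reverse H\"older hypothesis cannot be invoked on it), the passage between balls and cubes must be carried out carefully, and the inhomogeneous function $f^p$ must be threaded through the selection via its own companion decomposition. Once that distributional estimate is in hand, the integration in $t$ and the absorption are routine exponent bookkeeping.
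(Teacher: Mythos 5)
The paper does not prove this lemma at all --- it is quoted verbatim (modulo the typo $x\in B$ for $x\in Q$) from \cite[Chapter~V, Theorem~1.2]{Gia83} and used as a black box --- so your observation that a citation suffices matches exactly what the paper does. Your accompanying sketch is the standard Gehring/Calder\'on--Zygmund argument that underlies the cited result (stopping-time decomposition, distributional inequality, integration in $t$ with absorption for $q-p$ small), and it is sound.
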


The following lemma was established in \cite[Section~9, Lemma~2]{FefS72} in the case of harmonic functions. We must now generalize it. 
\begin{lem}\label{lem:subaverage} 
Let $0<p_0<q\leq\infty$. Let $x_0\in \R^\dmn$ and let $R>0$. 
Suppose that $u\in L^q(B(x_0,R))$ is a function with the property that, whenever $0<\rho<r<R$, we have the bound
\begin{equation*}
\biggl(\int_{B(x_0,\rho)}\abs{u}^q\biggr)^{1/q}
\leq \frac{C_0}{(r-\rho)^{\pdmn/p_0-\pdmn/q}} \biggl(\int_{B(x_0,r)}\abs{u}^{p_0}\biggr)^{1/p_0}
+F
\end{equation*}
for some constants~$C_0$ and~$F$ depending only on $u$.

Then for every $p$ with $0<p\leq p_0$, there is some constant $C(p,q)$, depending only on $p$, $p_0$, $q$ and $C_0$, such that for any such $\rho$ and~$r$,
\begin{align}
\label{eqn:local-bound:step1}
\biggl(\int_{B(x_0,\rho)}\abs{u}^q\biggr)^{1/q}
&\leq
	\frac{C(p,q)}{(r-\rho)^{\pdmn/p-\pdmn/q}} \biggl(\int_{B(x_0,r)}\abs{u}^{p}\biggr)^{1/p}
	+C(p,q)\,F
.\end{align}
\end{lem}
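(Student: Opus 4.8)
The plan is to promote the hypothesized reverse-H\"older bound, which is stated with the fixed exponent $p_0$, down to an arbitrary exponent $0 < p \le p_0$ by iterating the bound across a geometric sequence of radii and absorbing a power of the quantity being estimated. The key algebraic trick is the elementary interpolation inequality: for $0 < p < p_0$ and any $\varepsilon > 0$, one has
\begin{equation*}
\doublebar{u}_{L^{p_0}(E)} \le \varepsilon \doublebar{u}_{L^q(E)} + C(\varepsilon,p,p_0,q)\doublebar{u}_{L^p(E)},
\end{equation*}
valid on any measurable set $E$ with the convention that $q$ may be $\infty$; this follows from H\"older's inequality writing $p_0$ as an interpolated exponent between $p$ and $q$ together with Young's inequality $ab \le \varepsilon a^{s} + C(\varepsilon) b^{s'}$. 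Substituting this into the hypothesis converts the $L^{p_0}$ norm on the right-hand side into a small multiple of an $L^q$ norm on a slightly larger ball plus a controlled $L^p$ norm.

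First I would fix $0 < \rho < r < R$ and set up the standard dyadic chain: let $\rho_0 = \rho$, and $\rho_{j} = \rho + (r-\rho)(1-\sigma)\sum_{i=1}^{j}\sigma^{i}$ for a parameter $\sigma \in (0,1)$ to be chosen, so $\rho_j \uparrow r$ and $\rho_{j+1}-\rho_j = (r-\rho)(1-\sigma)\sigma^{j}$. Write $\Phi(t) = \bigl(\int_{B(x_0,t)} \abs{u}^q\bigr)^{1/q}$. Applying the hypothesis between radii $\rho_j$ and $\rho_{j+1}$, then using the interpolation inequality on $B(x_0,\rho_{j+1})$ with a small parameter $\varepsilon$ chosen so that the resulting coefficient of $\Phi(\rho_{j+1})$ is, say, at most $1/2$ after accounting for the factor $C_0 (\rho_{j+1}-\rho_j)^{-(\pdmn/p_0-\pdmn/q)}$, I obtain an inequality of the form
\begin{equation*}
\Phi(\rho_j) \le \tfrac12 \Phi(\rho_{j+1}) + \frac{C(p,q)}{(\rho_{j+1}-\rho_j)^{\pdmn/p-\pdmn/q}}\biggl(\int_{B(x_0,r)}\abs{u}^p\biggr)^{1/p} + C(p,q)\,F.
\end{equation*}
Here I should be slightly careful: the coefficient multiplying $\Phi(\rho_{j+1})$ involves a negative power of $\rho_{j+1}-\rho_j$, which blows up as $j\to\infty$, so the correct move is to choose $\varepsilon = \varepsilon_j$ depending on $j$ — specifically $\varepsilon_j$ comparable to $2^{-j}$ times the reciprocal of that bad power — which forces the constant $C(\varepsilon_j,\dots)$ multiplying the $L^p$ term to grow, but only like a fixed power of $\sigma^{-j}$. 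This is the usual bookkeeping in Giaquinta-style iteration.

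Then I would iterate: substituting the bound for $\Phi(\rho_{j+1})$ into that for $\Phi(\rho_j)$ repeatedly gives
\begin{equation*}
\Phi(\rho_0) \le 2^{-J}\Phi(\rho_J) + \sum_{j=0}^{J-1} 2^{-j}\biggl(\frac{C(p,q)}{(\rho_{j+1}-\rho_j)^{\pdmn/p-\pdmn/q}}\doublebar{u}_{L^p(B(x_0,r))} + C(p,q)F\biggr).
\end{equation*}
Since $\Phi(\rho_J) \le \Phi(r) < \infty$ by hypothesis, the first term tends to $0$ as $J\to\infty$. In the remaining series, $(\rho_{j+1}-\rho_j)^{-(\pdmn/p-\pdmn/q)}$ is comparable to $(r-\rho)^{-(\pdmn/p-\pdmn/q)}\sigma^{-j(\pdmn/p-\pdmn/q)}$ (times a constant depending on $1-\sigma$), so the $j$-series is $\sum_j 2^{-j}\sigma^{-j(\pdmn/p-\pdmn/q)}$ up to a further fixed-power-of-$\sigma^{-j}$ factor coming from the $\varepsilon_j$ bookkeeping; choosing $\sigma$ close enough to $1$ (depending only on $p$, $p_0$, $q$, $\dmn$) makes this geometric series converge, and its sum is absorbed into $C(p,q)$. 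The $F$-series is dominated by $\sum_j 2^{-j} \cdot (\text{power of }\sigma^{-j})$, again convergent for $\sigma$ near $1$. This yields exactly \eqref{eqn:local-bound:step1}. The main obstacle, and the only subtle point, is the simultaneous balancing of the two competing $j$-dependent factors — the absorption factor $2^{-j}$ against the blow-up $(\rho_{j+1}-\rho_j)^{-\theta}$ and the $\varepsilon_j$-induced constant growth — which is resolved by the standard device of taking $\sigma$ sufficiently close to $1$ after all exponents have been pinned down; everything else is routine application of H\"older, Young, and the hypothesis. One should also note the edge case $q = \infty$, where the interpolation inequality reads $\doublebar{u}_{L^{p_0}(E)} \le \varepsilon\doublebar{u}_{L^\infty(E)}\abs{E}^{1/p_0} + \dots$ — but since we work on balls of comparable size the measure factors are harmless and the same argument goes through verbatim.
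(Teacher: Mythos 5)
Your proof is correct and takes essentially the same route as the paper's: interpolate the $L^{p_0}$ norm between $L^p$ and $L^q$, apply Young's inequality with a $j$-dependent small parameter so a fixed fraction of $\Phi(\rho_{j+1})$ can be absorbed, iterate along a geometric chain of radii $\rho_j\uparrow r$, and close by taking $\sigma$ near $1$ so the resulting geometric series converges. The paper merely packages the Young step differently — it retains the multiplicative interpolation $\doublebar{u}_{L^{p_0}}\leq\doublebar{u}_{L^p}^\tau\doublebar{u}_{L^q}^{1-\tau}$, substitutes the hypothesis into the $L^q$ factor, and then applies a single weighted Young with absorption factor $1-\tau$ rather than $1/2$ — and your hedge about ``a further fixed-power-of-$\sigma^{-j}$ factor'' is unnecessary, since the Young constant $C(\varepsilon_j)$ combines with the prefactor $(\rho_{j+1}-\rho_j)^{-(\pdmn/p_0-\pdmn/q)}$ to give exactly $(\rho_{j+1}-\rho_j)^{-(\pdmn/p-\pdmn/q)}$.
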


\begin{proof}

Let $\rho=\rho_0<\rho_1<\rho_2<\dots<r$ for some $\rho_k$ to be chosen momentarily, and let $B_k=B(x_0,\rho_k)$. If $0<\tau<1$, then
\begin{align*}
\doublebar{u}_{L^{p_0}({B_k})}
&=
\biggl(\int_{B_k} \abs{u}^{p_0}\biggr)^{1/p_0}
=
\biggl(\int_{B_k} \abs{u}^{\tau p_0} \abs{u}^{(1-\tau) p_0}\biggr)^{1/p_0}
.\end{align*}
If $0<\tau\leq p/p_0$, then $p/\tau p_0\geq 1$ and so we may apply H\"older's inequality to see that
\begin{align*}
\doublebar{u}_{L^{p_0}({B_k})}
&\leq
\doublebar{u}_{L^{p}({B_k})}^\tau
\doublebar{u}_{L^\gamma({B_k})}^{1-\tau}
\end{align*}
where $\gamma$ satisfies $1/p_0=\tau/p+(1-\tau)/\gamma$.
Choose $\tau$ so that $\gamma=q$; observe that this means that $\tau=(p/p_0)(q-p_0)/(q-p)$, and thus if $0<p<p_0<q$ then $\tau$ does satisfy the condition $0<\tau<p/p_0$.

In order for our estimates to scale correctly, we rewrite this estimate as
\begin{align}
\label{eqn:local-bound:step2}
\frac{\doublebar{u}_{L^{p_0}({B_k})}}{(r-\rho)^{\pdmn/p_0}}
&\leq
	\biggl(\frac{ \doublebar{u}_{L^{p}({B_k})} }{(r-\rho)^{\pdmn/p}}\biggr)^\tau
	\biggl(\frac{ \doublebar{u}_{L^q({B_k})} }{(r-\rho)^{\pdmn/q}}\biggr)^{1-\tau}
.\end{align}

By the bound~\eqref{eqn:local-bound:step1},
\begin{align*}
\frac{ \doublebar{u}_{L^q({B_k})} }{(r-\rho)^{\pdmn/q}}
&\leq
	\frac{C(p_0,q)\doublebar{u}_{L^{p_0}(B_{k+1})}} {(\rho_{k+1}-\rho_k)^{\beta}(r-\rho)^{\pdmn/q}}
	+
	\frac{C(p_0)\,F} {(r-\rho)^{\pdmn/q}}	
\end{align*}
where we have set $\beta={d/p_0-d/q}$. Notice $\beta>0$.

Recall that $\rho_0=\rho$. Let $\rho_{k+1}=\rho_k + (r-\rho)(1-\sigma) \sigma^k$ for some constant $0<\sigma<1$ to be chosen momentarily. Notice that $\lim_{k\to\infty} \rho_k=r$. Because $\sigma^{-k\beta}>1>(1-\sigma)^{\beta}$, we have that
\begin{align*}
\frac{ \doublebar{u}_{L^q({B_k})} }{(r-\rho)^{\pdmn/q}}
&\leq
	\frac{C(p_0,q)\,F} {(r-\rho)^{\pdmn/q}}	
	+
	\sigma^{-k\beta}
	\frac{C(p_0,q)\doublebar{u}_{L^{p_0}(B_{k+1})}} {(1-\sigma)^{\beta} (r-\rho)^{\pdmn/p_0}}
\\&\leq
	C(p_0,q,\sigma)
	\sigma^{-k\beta}
	\biggl(
	\frac{F} {(r-\rho)^{\pdmn/q}}	
	+
	\frac{\doublebar{u}_{L^{p_0}(B_{k+1})}} {(r-\rho)^{\pdmn/p_0}}
	\biggr)
.\end{align*}
By the bound \eqref{eqn:local-bound:step2} and Young's inequality, we have that
\begin{align*}
\frac{\doublebar{u}_{L^{p_0}({B_k})}}{(r-\rho)^{\pdmn/p_0}}
&\leq
	\tau C(p_0,q,\sigma) \sigma^{-k\beta(1-\tau)/\tau}
	\frac{ \doublebar{u}_{L^{p}({B_k})} }{(r-\rho)^{\pdmn/p}}
	+
	(1-\tau)\frac{F} {(r-\rho)^{\pdmn/q}}	
	\\&\qquad
	+
	(1-\tau)
	\frac{\doublebar{u}_{L^{p_0}(B_{k+1})}} {(r-\rho)^{\pdmn/p_0}}
.\end{align*}

Applying this bound to $k=0$ and iterating, we have that for any integer~$K\geq 1$,
\begin{align*}
\frac{\doublebar{u}_{L^{p_0}({B_0})}}{(r-\rho)^{\pdmn/p}}
&\leq
	\sum_{k=0}^{K} (1-\tau)^k
	\biggl(
	\tau C(p_0,q,\sigma) \sigma^{-k\beta(1-\tau)/\tau}
	\frac{ \doublebar{u}_{L^{p}({B_k})} }{(r-\rho)^{\pdmn/p}}
	\biggr)
	\\&\qquad+
	\sum_{k=0}^{K} (1-\tau)^k
	\biggl(
	(1-\tau)\frac{F} {(r-\rho)^{\pdmn/q}}	
	\biggr)
	\\&\qquad
	+
	(1-\tau)^{K+1}
	\frac{\doublebar{u}_{L^{p_0}(B_{K+1})}} {(r-\rho)^{\pdmn/p_0}}
.\end{align*}
We want to take the limit as $K\to \infty$. Choose $\sigma$ so that $(1-\tau)<\sigma^{\beta(1-\tau)/\tau}<1$; then the sums converge and we have that
\begin{align*}
\frac{\doublebar{u}_{L^{p_0}({B(x_0,r)})}}{(r-\rho)^{\pdmn/p}}
&\leq
	C(p_0,p,q) 
	\frac{ \doublebar{u}_{L^{p}({B(x_0,r)})} }{(r-\rho)^{\pdmn/p}}
	+C(p_0,p,q)
	\frac{F} {(r-\rho)^{\pdmn/q}}	
.\end{align*}
This completes the proof.
\end{proof}


\begin{proof}[Proof of Theorem~\ref{thm:Meyers}]
We begin with the bound \eqref{eqn:Meyers}.

Let $x_1\in \R^\dmn$ and let $\rho>0$ be such that $B(x_1,2\rho)\subset B(x_0,R)$. By Lemma~\ref{lem:Caccioppoli:1},
\begin{align*}\fint_{B(x_1,\rho)} \abs[big]{\nabla^{m}\vec u}^2
&\leq
\sum_{j=1}^{m}
\frac{C}{\rho^{2j}} \fint_{B(x_1,(3/2)\rho)} \abs[big]{\nabla^{m-j}\vec u}^2
+ C\fint_{B(x_1,(3/2)\rho)} h^2
\end{align*}
where $h(x)=\abs{\mydot F(x)}+\delta^{1/2}\abs{\vec u(x)}$. (Recall that $\vec u=0$ in $B(x_0,R)\setminus\Omega$; we may also take $\mydot F=0$ in $B(x_0,R)\setminus\Omega$.)

If $B(x_1,(3/2)\rho)\subset \Omega$, then we 
normalize $\vec u$ by adding polynomials, so that $\fint_{B(x_1,(3/2)\rho)} \nabla^i\vec u=0$ for all $0\leq i\leq m-1$; if $L\vec u=\Div_m\mydot F$ in all of $B(x_1,(3/2)\rho)$ then the above bound is still valid. We may then apply the Poincar\'e inequality to control the integral of $\nabla^{m-j}\vec u$ by the integral of $\nabla^{m-1}\vec u$.
Thus, 
\begin{equation*}\fint_{B(x_1,\rho)} \abs[big]{\nabla^{m}\vec u}^2
\leq
\frac{C}{\rho^2} \fint_{B(x_1,(3/2)\rho)} \abs[big]{\nabla^{m-1}\vec u}^2
+ C\fint_{B(x_1,(3/2)\rho)} h^2
.\end{equation*}
Now, let $2_1'=2\pdmn/(\dmn+2)$.
By Lemma~\ref{lem:GNS:ball},
\begin{align*}\biggl(\fint_{B(x_1,(3/2)\rho)} \abs[big]{\nabla^{m-1}\vec u}^2\biggr)^{1/2}
&\leq
	C\rho \biggl(\fint_{B(x_1,(3/2)\rho)} \abs[big]{\nabla^{m}\vec u}^{2_1'}\biggr)^{1/2_1'}
	\\&\qquad
	+ C\biggl(\fint_{B(x_1,(3/2)\rho)} \abs[big]{\nabla^{m-1}\vec u}^{2_1'}\biggr)^{1/2_1'}
.\end{align*}
Using the Poincar\'e inequality and the assumption that $\fint_{B(x_1,2\rho)}\nabla^{m-1}\vec u=0$, we may control the second term on the right-hand side by the first; we thus have the bound
\begin{align*}\biggl(\fint_{B(x_1,\rho)} \abs[big]{\nabla^{m}\vec u}^2\biggr)^{1/2}
&\leq
C \biggl(\fint_{B(x_1,(3/2)\rho)} \abs[big]{\nabla^{m}\vec u}^{2_1'}\biggr)^{1/2_1'}
+ C\biggl(\fint_{B(x_1,(3/2)\rho)} h^2\biggr)^{1/2}
.\end{align*}

If $B(x_1,(3/2)\rho)\not\subset\Omega$,
then there is some $x_2\in \partial\Omega\cap B(x_1,(3/2)\rho)$. By our assumption on~$\Omega$, 
\begin{equation*}2^{-\dmn} c_\Omega \rho^\dmn \leq \abs{B(x_2,\rho/2)\setminus\Omega}\leq\abs{B(x_1,2\rho)\setminus\Omega}
.\end{equation*}
Then $\nabla^{m-j}\vec u=0$ in the substantial set ${B(x_1,2\rho)\setminus\Omega}$ for all~$j$. Thus, we may use the Poincar\'e inequality in $B(x_1,2\rho)$ without renormalizing~$\vec u$. Arguing as before we have the bound 
\begin{equation*}\biggl(\fint_{B(x_1,\rho)} \abs[big]{\nabla^{m}\vec u}^2\biggr)^{1/2}
\leq
C \biggl(\fint_{B(x_1,2\rho)} \abs[big]{\nabla^{m}\vec u}^{2_1'}\biggr)^{1/2_1'}
+ C\biggl(\fint_{B(x_1,2\rho)} h^2\biggr)^{1/2}
.\end{equation*}

Observe that $2_1'<2$. Thus we have established a reverse H\"older inequality.
In particular, the bound \eqref{eqn:Meyers} is valid for $R=2r=2\rho$,
for $q=2$ and for $p=2_1'$.

We now use Lemma~\ref{lem:Giaquinta} to improve to $q>2$.
Observe that we may cover $B(x_0,r)$ by a grid of cubes $Q_j$, $1\leq j\leq J$, with side-length $\ell(Q_j)=(R-r)/2c_0$, with pairwise-disjoint interiors. If we choose $c_0$ large enough (depending on the dimension), then $2Q_j\subset B(x_0,R)$ for all~$j$. We then have that, for any~$p$,
\begin{equation*}\int_{B(x_0,r)} \abs{\nabla^m\vec u}^p
\leq \sum_{j=1}^J \int_{Q_j}  \abs{\nabla^m\vec u}^p.\end{equation*}

Fix some $j$.
Let $g(x)=\abs{\nabla^m u(x)}^{2_1'}$, and let $f(x)=h(x)^{2_1'}$. Let $p=2/2_1'$; notice $p>1$.

If $x_1\in Q_j$, and if $0<\rho<\dist(x_1,\partial Q_j)/2$, then
\begin{align*}
\fint_{B(x_1,\rho)} g^p
&=
	\fint_{B(x_1,\rho)} \abs{\nabla^{m} u(x)}^2
\\&\leq 
	C \biggl(\fint_{B(x_1,2\rho)} \abs[big]{\nabla^{m}\vec u}^{2_1'}\biggr)^{2/2_1'}
	+ C\fint_{B(x_1,2\rho)} h^2
\\&=
	C \biggl(\fint_{B(x_1,2\rho)} g\biggr)^p
	+ C\fint_{B(x_1,2\rho)} f^p
.\end{align*}
Thus Lemma~\ref{lem:Giaquinta} applies, and so there is some $q^+>2$ such that
\begin{align*}
\biggl(\fint_{Q_j}\abs{\nabla^m \vec u}^q\biggr)^{1/q}
&\leq 
	C(q)\biggl(\fint_{2Q_j}\abs{\nabla^m \vec u}^2\biggr)^{1/2} 
	+ C(q)\biggl(\fint_{2Q_j}h^q\biggr)^{1/q}
\end{align*}
for all $q$ with $2<q<p^+$.
Thus, 
\begin{align*}
\int_{B(x_0,r)} \abs{\nabla^m\vec u}^q
&\leq \sum_{j=1}^J \int_{Q_j}  \abs{\nabla^m\vec u}^q
\\&\leq
	\sum_{j=1}^J
	\frac{C(q)}{\ell(Q_j)^{\pdmn q/2-\pdmn}}
	\biggl( \int_{2Q_j}  \abs{\nabla^m\vec u}^2\biggr)^{q/2}
	+
	C(q)\sum_{j=1}^J \int_{2Q_j}  h^q
.\end{align*}
Recall that $\ell(Q_j) = (R-r)/2c_0$.
Observe that almost every $x\in B(x_0,R)$ is in at most $2^\dmn$ of the cubes $2Q_j$; thus,
\begin{align*}
\int_{B(x_0,r)} \abs{\nabla^m\vec u}^q
&\leq 
	\frac{C(q)}{(R-r)^{\pdmn q/2-\pdmn}}
	\biggl( 
	\int_{B(x_0,R)}  \abs{\nabla^m\vec u}^2\biggr)^{q/2}
	+
	C(q)\int_{B(x_0,R)}  h^q
\end{align*}
as desired.

Applying Lemma~\ref{lem:subaverage}, we see that we may replace the exponent~$2$ by any exponent $p>0$; this completes the proof of the bound \eqref{eqn:Meyers}.

Now, suppose that $0<k<\pdmn/2$. We wish to prove the bound \eqref{eqn:Meyers:lower}.
We apply Lemma~\ref{lem:GNS:ball} to $v=\nabla^{m-k}\vec u$. This gives us the bound 
\begin{align*}
\biggl(\int_{B(x_1,\rho)}\abs{\nabla^{m-k} \vec u}^{q_k}\biggr)^{1/q_k}
&\leq
	C\sum_{i=0}^k \rho^{-i}
	\biggl(\int_{B(x_1,\rho)}\abs{\nabla^{m-i} \vec u}^{q}\biggr)^{1/q}
.\end{align*}
We have that
\begin{align*}
\biggl(\int_{B(x_1,\rho)}\abs{\nabla^{m-i} \vec u}^{q}\biggr)^{1/q}
&\leq
	\biggl(\int_{B(x_1,\rho)}\abs{\nabla^{m-i} \vec u-{\textstyle\fint_{B(x_1,\rho)} \nabla^{m-i} \vec u}}^{q}\biggr)^{1/q}
	\\&\qquad
	+
	C\rho^{\dmn/q}\abs{\textstyle\fint_{B(x_1,\rho)} \nabla^{m-i} \vec u}
\end{align*}
and so by the Poincar\'e inequality
\begin{align*}
\biggl(\int_{B(x_1,\rho)}\abs{\nabla^{m-i} \vec u}^{q}\biggr)^{1/q}
&\leq
	C\rho\biggl(\int_{B(x_1,\rho)}\abs{\nabla^{m-i+1} \vec u}^{q}\biggr)^{1/q}
	\\&\qquad
	+
	C\rho^{\pdmn/q-\pdmn}\int_{B(x_1,\rho)} \abs{\nabla^{m-i} \vec u}
.\end{align*}
Iterating, we see that
\begin{align*}
\biggl(\int_{B(x_1,\rho)}\abs{\nabla^{m-k} \vec u}^{q_k}\biggr)^{1/q_k}
&\leq
	C\biggl(\int_{B(x_1,\rho)}\abs{\nabla^{m} \vec u}^{q}\biggr)^{1/q}
	\\&\qquad
	+C\sum_{i=0}^k \rho^{-i+\pdmn/q-\pdmn}\int_{B(x_1,\rho)} \abs{\nabla^{m-i} \vec u}
.\end{align*}
Applying the known results for $\nabla^m\vec u$ and Corollary~\ref{cor:Caccioppoli:interior} or~\ref{cor:Caccioppoli:Dirichlet}, we see that
\begin{align*}
\biggl(\int_{B(x_1,\rho)}\abs{\nabla^{m-k} \vec u}^{q_k}\biggr)^{1/q_k}
&\leq
	\frac{C(q)}{\rho^{\pdmn/2-\pdmn/q+m}}
	\biggl(\int_{B(x_1,(3/2)\rho)} \abs{\vec u}^2\biggr)^{1/2} 
	\\&\qquad
	+ C(q) \biggl(\int_{B(x_1,(3/2)\rho)}h^q\biggr)^{1/q}
.\end{align*}

As before, we either normalize $\vec u$ in $B(x_1,(3/2)\rho)$ by adding polynomials of degree $m-k-1$ or observe that $\vec u$ and all its derivatives are zero on a substantial subset of $B(x_1,2\rho)$; in either case we may use the Poincar\'e inequality to control $\vec u$ by $\nabla^{m-k}\vec u$. This yields the bound 
\begin{align*}
\biggl(\int_{B(x_1,\rho)}\abs{\nabla^{m-k} \vec u}^{q_k}\biggr)^{1/q_k}
&\leq
	\frac{C(q)}{\rho^{\pdmn/2-\pdmn/q+k}}
	\biggl(\int_{B(x_1,2\rho)} \abs{\nabla^{m-k} \vec u}^2\biggr)^{1/2} 
	\\&\qquad
	+ C(q) \biggl(\int_{B(x_1,2\rho)}h^q\biggr)^{1/q}
.\end{align*}
By H\"older's inequality we may replace the exponent $2$ by the exponent $p_k$ provided $p_k\geq 2$.
Using standard covering lemmas, if $q_k\geq \max(p_k,q)$ then we may improve to the estimate
\begin{align*}
\biggl(\int_{B(x_1,\rho)}\abs{\nabla^{m-k} \vec u}^{q_k}\biggr)^{1/q_k}
&\leq
	\frac{C(q)}{(r-\rho)^{\pdmn/p_k-\pdmn/q+k}}
	\biggl(\int_{B(x_0,r)} \abs{\nabla^{m-k} \vec u}^{p_k}\biggr)^{1/{p_k}} 
	\\&\qquad
	+ C(q) \biggl(\int_{B(x_0,r)}h^q\biggr)^{1/q}
.\end{align*}
By Lemma~\ref{lem:subaverage} this inequality is still valid for $0<{p_k}<2$.

Identical arguments, using Lemma~\ref{lem:Morrey} in place of Lemma~\ref{lem:GNS:ball}, establish the bound \eqref{eqn:Meyers:lowest} on $\sup \abs{\nabla^{m-k}\vec u}$ in the case $k>\pdmn/q$.
\end{proof}

In some domains we may also prove a boundary reverse H\"older estimate in the Neumann case.
\begin{thm} 
\label{thm:Meyers:Neumann}
Let $\Omega$ be a Lipschitz graph domain, that is, a domain of the form 
\begin{equation*}\Omega=\{(x',t):x'\in\R^{\dmnMinusOne},\>t>\varphi(x')\}\end{equation*}
for some function $\varphi:\R^\dmnMinusOne\mapsto\R$ with $\doublebar{\nabla\varphi}_{L^\infty(\R^\dmnMinusOne)}=M<\infty$.

Let $L$ be an operator of order~$2m$ that satisfies the bound~\eqref{eqn:elliptic:bounded:2} and the bound \eqref{eqn:elliptic:local} in~$\Omega$.

Then there is some number $p^+=p^+_L>2$ depending only on the standard constants and the number $M=\doublebar{\nabla\varphi}_{L^\infty(\R^\dmnMinusOne)}$ such that the following statement is true.

Let $x_0\in\partial\Omega$ and let $R>0$. Suppose that 
$\vec u\in \dot W^2_m(B(x_0,R))$, that $\mydot F\in L^2(B(x_0,R))$, and that 
\begin{equation*}\bigl\langle \nabla^m\vec\varphi, \mat A\nabla^m\vec u\bigr\rangle_\Omega = \bigl\langle \nabla^m\vec\varphi, \mydot F\bigr\rangle_\Omega \end{equation*}
for all smooth functions $\vec\varphi$ supported in $B(x_0,R)$.

Then
\begin{align}
\label{eqn:Meyers:Neumann}
\biggl(\int_{B(x_0,r)\cap\Omega}\abs{\nabla^m \vec u}^q\biggr)^{1/q}
&\leq 
	\frac{C(M,p,q)}{(R-r)^{\pdmn/p-\pdmn/q}}
	\biggl(\int_{B(x_0,R)\cap\Omega}\abs{\nabla^m \vec u}^p\biggr)^{1/p} 
	\\&\qquad\nonumber
	+ C(M,p,q) \biggl(\int_{B(x_0,R)\cap\Omega}\abs{\mydot F}^q+\delta^{q/2}\abs{\vec u}^q\biggr)^{1/q}
\end{align}
for some constant $C(M,p,q)$ depending only on $p$, $q$, $M$ and the standard parameters.
\end{thm}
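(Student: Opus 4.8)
The plan is to adapt the proof of Theorem~\ref{thm:Meyers} essentially line by line, with two changes forced by the Neumann condition. First, in place of the interior Caccioppoli inequality of Lemma~\ref{lem:Caccioppoli:1} we must use its Neumann analogue, Lemma~\ref{lem:Caccioppoli:Neumann}, whose conclusion \eqref{eqn:Caccioppoli:1} still carries all of the lower-order derivatives $\nabla^i\vec u$, $0\leq i<m$, on its right-hand side. Second, near $\partial\Omega$ we can no longer appeal to vanishing of $\vec u$ on a substantial subset of a ball, which was the device that handled the Dirichlet case; the reduction of these lower-order terms must instead be carried out by renormalizing $\vec u$ by polynomials and invoking the Poincar\'e and Gagliardo--Nirenberg--Sobolev inequalities on the subdomains $\Omega\cap B(x_1,\rho)$. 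This is where the graph hypothesis is used: if $x_1\in\overline\Omega$ and $\rho>0$, then $\abs{\Omega\cap B(x_1,\rho)}\geq c(M)\rho^\dmn$ and $\Omega\cap B(x_1,\rho)$ is comparable to a Lipschitz cylinder whose Lipschitz character depends only on~$M$, so both inequalities hold on it with constants depending only on $M$ and~$\dmn$ (the Gagliardo--Nirenberg--Sobolev inequality of Lemma~\ref{lem:GNS:ball} being transferred from balls to such domains through a Sobolev extension operator of norm controlled by~$M$).

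\emph{The local reverse H\"older estimate.} Write $h=\abs{\mydot F}+\delta^{1/2}\abs{\vec u}$ and set $2_1'=2\pdmn/(\dmn+2)<2$. The main point is that there is a constant $C=C(M)$ such that whenever $x_1\in\overline\Omega$ and $B(x_1,2\rho)\subset B(x_0,R)$,
\begin{equation*}
\biggl(\fint_{\Omega\cap B(x_1,\rho)}\abs{\nabla^m\vec u}^2\biggr)^{1/2}
\leq C\biggl(\fint_{\Omega\cap B(x_1,2\rho)}\abs{\nabla^m\vec u}^{2_1'}\biggr)^{1/2_1'}
+C\biggl(\fint_{\Omega\cap B(x_1,2\rho)}h^2\biggr)^{1/2}.
\end{equation*}
To see this, apply Lemma~\ref{lem:Caccioppoli:Neumann} on the ball $B(x_1,(3/2)\rho)$ with domain $\Omega\cap B(x_1,(3/2)\rho)$: the Neumann condition \eqref{eqn:Caccioppoli:Neumann} and the local G\r{a}rding inequality \eqref{eqn:elliptic:local} descend to this sub-ball because only test functions supported in $B(x_1,(3/2)\rho)$ enter the argument. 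A polynomial of degree $m-1$ is annihilated by $\nabla^m$, so subtracting one from $\vec u$ changes neither $\nabla^m\vec u$ nor the validity of the Neumann condition; choosing it so that $\fint_{\Omega\cap B(x_1,(3/2)\rho)}\nabla^j\vec u=0$ for $0\leq j\leq m-1$ and iterating the Poincar\'e inequality collapses the sum $\sum_{i=0}^{m-1}\rho^{2i-2m}\int\abs{\nabla^i\vec u}^2$ to $C\rho^{-2}\int\abs{\nabla^{m-1}\vec u}^2$, and then the Gagliardo--Nirenberg--Sobolev inequality followed by one more application of Poincar\'e controls this (modulo the evident rescaling) by $C\bigl(\fint\abs{\nabla^m\vec u}^{2_1'}\bigr)^{2/2_1'}$, all integrals now being taken over $\Omega\cap B(x_1,(3/2)\rho)$. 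The term $\delta\int\abs{\vec u}^2$ coming from \eqref{eqn:Caccioppoli:1} is absorbed into the right-hand side exactly as in the interior case of Theorem~\ref{thm:Meyers}, using $\int\abs{\vec u-P}^2\leq C\rho^{2m}\int\abs{\nabla^m\vec u}^2$ and $\int\abs P^2\leq C(M)\sum_{j<m}\rho^{2j}\int\abs{\nabla^j\vec u}^2$; this bookkeeping introduces no new difficulty. Enlarging the outer ball from $(3/2)\rho$ to $2\rho$ completes the claim.

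\emph{Self-improvement of the exponent.} Given the local estimate, the remainder of the argument is that of Theorem~\ref{thm:Meyers}. Cover $B(x_0,r)$ by a grid of congruent cubes $Q_j$ of side-length a small fixed multiple of $R-r$, small enough that all balls entering the argument below are contained in $B(x_0,R)$, and apply Lemma~\ref{lem:Giaquinta} on each $2Q_j$ to the functions $g=\abs{\nabla^m\vec u}^{2_1'}\1_\Omega$ and $f=h^{2_1'}\1_\Omega$ (extended by zero off $\Omega$), with $p=2/2_1'>1$. On cubes disjoint from $\overline\Omega$ both functions vanish identically; on cubes meeting $\overline\Omega$, the hypothesis of Lemma~\ref{lem:Giaquinta} is exactly the local estimate above (for a point $x\notin\overline\Omega$ one first moves $x$ to a nearby boundary point, using $\abs{\Omega\cap B(x_1,\rho)}\geq c(M)\rho^\dmn$). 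Lemma~\ref{lem:Giaquinta} then yields some $p^+_L>2$ such that $\nabla^m\vec u$ obeys a reverse H\"older inequality on $Q_j$ with exponents $(2,q)$ whenever $2<q<p^+_L$; summing over $j$ by the bounded-overlap property of $\{2Q_j\}$ and keeping track of the powers of $\ell(Q_j)\simeq R-r$ gives \eqref{eqn:Meyers:Neumann} with $p=2$ and $2<q<p^+_L$. Finally, Lemma~\ref{lem:subaverage}, applied to $\abs{\nabla^m\vec u}\1_\Omega$ with $p_0=2$ and $F$ a fixed multiple of $\bigl(\int_{B(x_0,R)\cap\Omega}h^q\bigr)^{1/q}$---its hypothesis being the $p=2$ form of \eqref{eqn:Meyers:Neumann} on concentric sub-balls of $B(x_0,R)$, which the previous step supplies---upgrades \eqref{eqn:Meyers:Neumann} to all $0<p\leq 2$, and H\"older's inequality handles the remaining values of $q$ as in the discussion after Theorem~\ref{thm:Meyers}.

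The step I expect to be the main obstacle is the local reverse H\"older estimate, and within it the boundary case $B(x_1,(3/2)\rho)\not\subset\Omega$: since Lemma~\ref{lem:Caccioppoli:Neumann} cannot be sharpened to drop the lower-order derivatives and the Dirichlet ``substantial set'' device is unavailable, one must renormalize $\vec u$ by polynomials and run the Poincar\'e and Gagliardo--Nirenberg--Sobolev inequalities on the Lipschitz subdomains $\Omega\cap B(x_1,\rho)$ with constants uniform in $x_1$ and $\rho$, and one must check that this renormalization is compatible both with the Neumann condition and with the bookkeeping of the $\delta^{1/2}\abs{\vec u}$ term. Once this is in place, the self-improvement steps are word-for-word as in the interior case.
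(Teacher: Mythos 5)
Your proposal is correct and follows essentially the same route as the paper, down to the same cast of lemmas (the Neumann Caccioppoli inequality, Poincar\'e, Gagliardo--Nirenberg--Sobolev, Lemma~\ref{lem:Giaquinta}, Lemma~\ref{lem:subaverage}) and the same key idea for the Neumann boundary case: since the ``substantial-set'' trick of the Dirichlet proof is unavailable, one always renormalizes $\vec u$ by subtracting a polynomial of degree $m-1$ (which is harmless for the Neumann condition and for $\nabla^m\vec u$) and uses Poincar\'e/GNS on the Lipschitz subdomains to collapse the lower-order Caccioppoli terms.

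The one genuine difference is the geometric reduction. The paper works throughout with Lipschitz cylinders $Q(x_1,\rho)=\{(x',t):\abs{x'-x_1'}<\rho,\ \varphi(x')+t_1-\rho<t<\varphi(x')+t_1+\rho\}$ adapted to the graph of $\varphi$; the point of this choice is that when $Q(x_1,2\rho)\cap\Omega$ is a nontrivial proper subset of $Q(x_1,2\rho)$, the explicit vertical stretch $(x,t)\mapsto(x,ct)$ (for a suitable $c\in(4/3,8)$) carries it back onto a Lipschitz cylinder, so Poincar\'e and Lemma~\ref{lem:GNS:ball} hold there with constants depending only on $M$ and with no further apparatus. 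You instead use Euclidean balls, observe that $\Omega\cap B(x_1,\rho)$ is bilipschitz to a Lipschitz cylinder with character controlled by $M$, and invoke a Sobolev extension operator to transfer Lemma~\ref{lem:GNS:ball}. Both routes yield the same uniformity; the paper's is more explicit and requires no extension theorem, while yours is more agnostic about the exact shape of the test regions. Your bookkeeping of the $\delta\int\abs{\vec u}^2$ term is fine in outcome though slightly misstated in detail: after subtracting $P$ the Caccioppoli error is $\delta\int\abs{\vec u-P}^2\le 2\delta\int\abs{\vec u}^2+2\delta\int\abs P^2$, and the second term is controlled by $C\delta\int\abs{\vec u}^2$ (so back into $\int h^2$) rather than by lower-order derivatives of $\vec u$ as you wrote; this does not affect the proof.
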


\begin{proof}
If $x_1=(x_1',t_1)\in\R^\dmn$ and $\rho>0$, then let $Q(x_1,\rho)$ be the Lipschitz cylinder
\begin{equation*}Q(x_1,\rho)=\{(x',t):\abs{x'-x_1'}<\rho,\> \varphi(x')+t_1-\rho<t<\varphi(x')+t_1+\rho\}.\end{equation*}
Using either covering lemmas or a bilipschitz change of variables, we see that many results stated in terms of balls are valid in Lipschitz cylinders. In particular, Lemma~\ref{lem:Caccioppoli:Neumann}, the Poincar\'e inequality, and the first-order Gagliardo-Nirenberg-Sobolev inequality 
\begin{equation*}\doublebar{v}_{L^{q_1}(Q(x_0,\rho))}\leq C \doublebar{\nabla v}_{L^q(Q(x_0,\rho))} +C\rho\doublebar{v}_{L^q(Q(x_0,\rho))},\end{equation*}
Lemma~\ref{lem:Giaquinta}, and Lemma~\ref{lem:subaverage}
are valid in Lipschitz cylinders. 

We now proceed much as in the proof of the estimate~\eqref{eqn:Meyers} of Theorem~\ref{thm:Meyers}.
Let $x_1\in \R^\dmn$ and let $\rho>0$ be such that $Q(x_1,2\rho)\subset B(x_0,R)$. By Lemma~\ref{lem:Caccioppoli:Neumann},
\begin{align*}\biggl(\fint_{Q(x_1,\rho)} 
\1_\Omega\abs[big]{\nabla^{m}\vec u}^2\biggr)^{1/2}
&\leq
\sum_{j=1}^{m}
\frac{C}{\rho^{2j}} \biggl(\fint_{Q(x_1,(3/2)\rho)} \1_\Omega\abs[big]{\nabla^{m-j}\vec u}^2\biggr)^{1/2}
\\&\qquad
+ C\biggl(\fint_{Q(x_1,(3/2)\rho)} h^2\biggr)^{1/2}
\end{align*}
where $h(x)=\abs{\mydot F(x)}+\delta^{1/2}\abs{\vec u(x)}$ in $\Omega$ and is zero outside~$\Omega$.

Notice that we may normalize $\vec u$ by adding polynomials, regardless of whether $Q(x_1,(3/2)\rho)$ is contained in~$\Omega$.
If $Q(x_1,(3/2)\rho)\subset \Omega$, then may establish the reverse H\"older inequality 
\begin{align*}\biggl(\fint_{Q(x_1,\rho)} \1_\Omega\abs[big]{\nabla^{m}\vec u}^2\biggr)^{1/2}
&\leq
C \biggl(\fint_{Q(x_1,(3/2)\rho)} \1_\Omega\abs[big]{\nabla^{m}\vec u}^{2_1'}\biggr)^{1/2_1'}
\\&\qquad
+ C\biggl(\fint_{Q(x_1,(3/2)\rho)} h^2\biggr)^{1/2}
\end{align*}
as in the proof of Theorem~\ref{thm:Meyers}.
If $Q(x_1,(3/2)\rho)\not\subset\Omega$, either $Q(x_1,(3/2)\rho)\cap\Omega=\emptyset$ and so this reverse H\"older inequality is trivially true, or $Q(x_1,2\rho)\cap\Omega$ is substantial. Specifically, in this final case there exists some $c$ with $4/3<c<8$ such that the map $(x,t)\mapsto (x,ct)$ sends $Q(x_1,2\rho)\cap\Omega$ to a Lipschitz cylinder. Thus, Lemma~\ref{lem:GNS:ball} and the Poincar\'e inequality are valid in $Q(x_1,2\rho)\cap\Omega$ with constants independent of $x_1$ and~$\rho$, and so we see that
\begin{align*}
\biggl(\fint_{Q(x_1,\rho)} \1_\Omega\abs[big]{\nabla^{m}\vec u}^2\biggr)^{1/2}
&\leq
C \biggl(\fint_{Q(x_1,2\rho)} \1_\Omega\abs[big]{\nabla^{m}\vec u}^{2_1'}\biggr)^{1/2_1'}
+ C\biggl(\fint_{Q(x_1,2\rho)} h^2\biggr)^{1/2}
.\end{align*}
This establishes a reverse H\"older inequality with $q=2$ and $p=2_1'$; as in the proof of Theorem~\ref{thm:Meyers}, we may use Lemmas~\ref{lem:Giaquinta} and~\ref{lem:subaverage} and covering lemmas to improve to arbitrary $p$, $q$ and to return to balls of radii $r$ and~$R$.
\end{proof}

\section{The fundamental solution}
\label{sec:fundamental}

In this section we will construct the fundamental solution for elliptic systems of arbitrary order $2m\geq 2$ in dimension $\dmn\geq 2$.
As in \cite{GruW82,HofK07}, we will construct the fundamental solution as the kernel of the solution operator to the equation $L\vec u=\Div_m\mydot F$.

Specifically, in Section~\ref{sec:newton} we will construct this solution operator using the Lax-Milgram lemma and will discuss its adjoint. In Section~\ref{sec:fundamental:high} we will construct a preliminary version of the fundamental solution in the case of operators of high order. In Section~\ref{sec:fundamental:further} we will refine our construction to produce some desirable additional properties, and finally in Section~\ref{sec:fundamental:low} we will extend these results to operators of arbitrary even order. A summary of the principal results concerning the fundamental solution is collected at the beginning of Section~\ref{sec:fundamental:low}.

\subsection{The Newton potential}
\label{sec:newton}

In this section we will construct the Newton potential, that is, the operator whose kernel is the fundamental solution. 
The Newton potential $\vec u=\vec \Pi^L\mydot F$ is defined as the solution to $L\vec u=\Div_m\mydot F$ in $\R^\dmn$. If $\mydot F\in L^2(\R^\dmn)$, then we may construct $\vec\Pi^L\mydot F$ as follows.

Recall the (complex) Lax-Milgram lemma:
\begin{thm}\textup{({\cite[Theorem~2.1]{Bab70}}).}
\label{thm:lax-milgram}
Let $H_1$ and $H_2$ be two Hilbert spaces, and let $B$ be a bounded bilinear form on $H_1\times H_2$ that is coercive in the sense that
	\begin{equation*}\sup_{w\in H_1\setminus\{0\}} \frac{\abs{B(w,v)}}{\doublebar{w}_{H_1}}\geq \lambda \doublebar{v}_{H_2},\quad
	\sup_{w\in H_2\setminus\{0\}} \frac{\abs{B(u,w)}}{\doublebar{w}_{H_2}}\geq \lambda \doublebar{u}_{H_1}\end{equation*}
	for every $u\in {H_1}$, $v\in {H_2}$, for some fixed $\lambda>0$. Then for every linear functional $T$ defined on ${H_2}$ there is a unique $u_T\in {H_1}$ such that $B(v,u_T)=\overline{T(v)}$. Furthermore, $\doublebar{u_T}_{H_1}\leq \frac{1}{\lambda}\doublebar{T}_{H_1\mapsto H_2}$.
\end{thm}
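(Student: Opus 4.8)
The plan is to reproduce the standard proof of the generalized (Babu\v{s}ka--)Lax--Milgram theorem: convert the bilinear form into a single bounded linear operator between the two Hilbert spaces via the Riesz representation theorem, and show that operator is invertible. First I would fix $u\in H_1$ and note that $v\mapsto B(u,v)$ is a bounded (conjugate-)linear functional on $H_2$; by the Riesz theorem it is represented by a unique $\mathcal B u\in H_2$, and $u\mapsto\mathcal B u$ defines a bounded linear operator $\mathcal B\colon H_1\to H_2$ with $\doublebar{\mathcal B}$ at most the bound of the form, such that $B(u,v)$ is recovered from $\langle\mathcal B u,v\rangle_{H_2}$ (with conjugates inserted according to the sesquilinearity convention). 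Given the linear functional $T$ on $H_2$, let $g\in H_2$ be the Riesz representative of $v\mapsto\overline{T(v)}$, so $\doublebar{g}_{H_2}$ equals the dual norm of $T$. Then the assertion of the theorem, namely that there is a unique $u_T\in H_1$ with $B(u_T,v)=\overline{T(v)}$ for all $v\in H_2$, is exactly the assertion that $\mathcal B u_T=g$ has a unique solution, and the claimed estimate $\doublebar{u_T}_{H_1}\leq\frac1\lambda\doublebar{T}$ will follow once we know $\doublebar{\mathcal B^{-1}}\leq\frac1\lambda$.

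So everything reduces to showing $\mathcal B\colon H_1\to H_2$ is a bijection with $\doublebar{\mathcal B^{-1}}\leq\frac1\lambda$. The second coercivity hypothesis says precisely that
\[\sup_{w\in H_2\setminus\{0\}}\frac{\abs{B(u,w)}}{\doublebar{w}_{H_2}}=\doublebar{\mathcal B u}_{H_2}\geq\lambda\doublebar{u}_{H_1}\qquad\text{for all }u\in H_1,\]
so $\mathcal B$ is injective and bounded below, hence has closed range and an inverse of norm at most $1/\lambda$ on that range. To see the range is all of $H_2$ I would show it is dense: if $v_0\in H_2$ is orthogonal to $\mathcal B H_1$, then $0=\langle\mathcal B u,v_0\rangle_{H_2}$, i.e.\ $B(u,v_0)=0$, for every $u\in H_1$, so the first coercivity hypothesis forces $0\geq\lambda\doublebar{v_0}_{H_2}$ and $v_0=0$. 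A closed, dense subspace is the whole space, so $\mathcal B$ is onto; being bounded below it is invertible with $\doublebar{\mathcal B^{-1}}\leq1/\lambda$. Setting $u_T=\mathcal B^{-1}g$ gives existence and the norm bound, and uniqueness is immediate: if $B(u_1,v)=B(u_2,v)$ for all $v$ then $\mathcal B(u_1-u_2)=0$, hence $u_1=u_2$ by the lower bound.

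The only genuinely substantive step is the passage from ``bounded below'' to ``surjective,'' which is exactly where the \emph{first} coercivity condition (the inf--sup condition tested over $H_1$) enters; everything else is bookkeeping with the Riesz isometry and the conjugation conventions. In the classical situation $H_1=H_2$ with $\re B(u,u)\geq\lambda\doublebar{u}_{H_1}^2$, a single condition suffices because it yields injectivity, closed range, and density of the range all at once; with two distinct spaces one genuinely needs both inf--sup conditions, one for each half of the invertibility argument. Since we work in Hilbert spaces the Riesz theorem is all that is needed; the Banach-space version in \cite{Bab70} would instead invoke the closed range theorem at the corresponding point.
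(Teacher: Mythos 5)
The paper states this theorem as a citation to \cite{Bab70} and does not supply a proof of its own, so there is no argument in the text to compare yours against; it is treated as a black-box tool in the construction of the Newton potential $\vec\Pi^L$. Your proof is correct and is the standard Hilbert-space argument: Riesz representation converts $B$ into a bounded operator $\mathcal B\colon H_1\to H_2$, the second inf--sup condition gives the lower bound $\doublebar{\mathcal Bu}_{H_2}\ge\lambda\doublebar{u}_{H_1}$ (hence injectivity, closed range, and the norm estimate on the inverse), the first inf--sup condition kills any $v_0\perp\mathcal BH_1$ and so gives density of the range, and surjectivity plus the Riesz representative of $T$ finishes it. One small remark on bookkeeping: the statement as printed places $u_T\in H_1$ in the \emph{second} slot of $B$, which is on $H_1\times H_2$, so there is an internal slot mismatch in the theorem as quoted; you silently resolved it by solving $B(u_T,v)=\overline{T(v)}$, which is the reading consistent with $u_T\in H_1$, and in the paper's application $H_1=H_2$ so the distinction evaporates. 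Your closing observation — that the Hilbert-space Riesz isometry replaces the closed range theorem used in Babu\v{s}ka's Banach-space setting — is exactly the right way to locate this argument relative to the cited source.
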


Let $L$ be an operator of order $2m$ that is elliptic in the sense that the coefficients satisfy the conditions \eqref{eqn:elliptic:bounded:2} and~\eqref{eqn:elliptic}.
Suppose that $\mydot F=\{F_{j,\alpha}:1\leq j\leq N,\abs{\alpha}=m\}$ is an array of functions all lying in $L^2(\R^\dmn\mapsto \C)$. 
Then $T_{\mydot F}(\vec v)=\bigl\langle \mydot F, \nabla^m \vec v\bigr\rangle_{\R^\dmn}$ is a bounded linear operator on the Hilbert space $\dot W^2_m(\R^\dmn)$. We choose $B(\vec w,\vec v) = \bigl\langle \nabla^m \vec w, \mat A\nabla^m \vec v\bigr\rangle_{\R^\dmn}$; by our ellipticity conditions \eqref{eqn:elliptic:bounded:2} and \eqref{eqn:elliptic}, $B$ is bounded and coercive on $\dot W^2_m(\R^\dmn)$. Let $\vec\Pi^L\mydot F$ be the element $u_T$ of $\dot W^2_m(\R^\dmn)$ given by the Lax-Milgram lemma. Then
\begin{equation}
\label{dfn:newton}
\bigl\langle\nabla^m\vec\varphi, \mat A\nabla^m(\vec\Pi^L\mydot F) \bigr\rangle_{\R^\dmn} 
=
\bigl\langle\nabla^m\vec\varphi, \mydot F \bigr\rangle_{\R^\dmn}
\end{equation}
for all $\vec\varphi\in \dot W^2_m(\R^\dmn\mapsto\C^N)$.

We will need some properties of the Newton potential $\vec\Pi^L$. First, by the uniqueness of solutions provided by the Lax-Milgram lemma, $\vec\Pi^L$ is a well-defined operator; furthermore, $\vec \Pi^L$ is linear and bounded $L^2(\R^\dmn)\mapsto \dot W^2_m(\R^\dmn)$.

Next, observe that if $\vec\Phi \in \dot W^2_m(\R^\dmn\mapsto\C^N)$, then by uniqueness of solutions to $L\vec u=\Div_m\mydot F$,
\begin{equation}\label{eqn:newton:identity}
\vec \Pi^L(\mat A\nabla^m\vec\Phi)=\vec\Phi\end{equation}
as $\dot W^2_m(\R^\dmn\mapsto\C^N)$-functions, that is, up to adding polynomials of order $m-1$.

Next, we wish to show that the adjoint $(\nabla^m\vec\Pi^L)^*$ to the operator $\nabla^m\vec\Pi^L$ is $\nabla^m\vec\Pi^{L^*}$. To prove this we will need the following elementary result; this will let us identify vector fields that arise as $m$th-order gradients.

\begin{lem}
\label{lem:curlfree}
Let $\begin{pmatrix} f_\alpha\end{pmatrix}_{\abs{\alpha}=m}$ be a set of functions in $L^1_{loc}(\Omega)$, where $\Omega$ is a simply connected domain. Suppose that whenever $\alpha+\vec e_k=\beta+\vec e_j$, we have that 
\begin{equation*}\bigl\langle \partial_j\varphi,f_\beta\bigr\rangle_\Omega
=\bigl\langle\partial_k\varphi, f_\alpha\bigr\rangle_\Omega\end{equation*}
for all $\varphi$ smooth and compactly supported in~$\Omega$.

Then there is some function $f\in \dot W^1_{m,loc}(\Omega)$ such that $f_\alpha=\partial^\alpha f$ for all~$\alpha$.
\end{lem}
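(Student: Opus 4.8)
The plan is to prove this by induction on $m$, reducing the order-$m$ case to the classical curl-free criterion for vector fields (the case $m=1$). For $m=1$ the hypothesis says exactly that $\bigl\langle\partial_j\varphi,f_{\vec e_k}\bigr\rangle_\Omega=\bigl\langle\partial_k\varphi,f_{\vec e_j}\bigr\rangle_\Omega$ for all test functions $\varphi$, i.e. $\partial_k f_{\vec e_j}=\partial_j f_{\vec e_k}$ weakly; on a simply connected domain this is the standard statement that a curl-free distributional vector field is a (weak) gradient, and one obtains $f\in\dot W^1_{1,loc}(\Omega)$ with $\partial_j f=f_{\vec e_j}$. (One can either invoke the Poincar\'e lemma for currents/distributions directly, or mollify to reduce to the smooth case on exhausting subdomains and patch.)

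For the inductive step, suppose the result holds for $m-1$ and we are given $\bigl(f_\alpha\bigr)_{\abs\alpha=m}$ satisfying the hypothesis. Fix an index $k\in\{1,\dots,\dmn\}$ and consider, for each multiindex $\gamma$ with $\abs\gamma=m-1$, the function $g^{(k)}_\gamma:=f_{\gamma+\vec e_k}$. These are $\abs\cdot=m-1$ arrays, and the order-$m$ compatibility relations restricted to indices of the form $\gamma+\vec e_k$ show that $\bigl(g^{(k)}_\gamma\bigr)_{\abs\gamma=m-1}$ satisfies the order-$(m-1)$ hypothesis of the lemma; by induction there is $h_k\in\dot W^1_{m-1,loc}(\Omega)$ with $\partial^\gamma h_k=f_{\gamma+\vec e_k}$ for all $\abs\gamma=m-1$. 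Now I claim the vector field $(h_1,\dots,h_\dmn)$ is itself curl-free in the weak sense, i.e. $\partial_j h_k=\partial_k h_j$: testing against $\partial^\gamma\varphi$ for $\abs\gamma=m-1$ and using the relations $\gamma+\vec e_k+\vec e_j$ in two ways, together with the fact (provable from the order-$(m-1)$ compatibility, or from the construction of $h_k$ via mollification) that $h_k$ is determined up to lower-order ambiguity, one deduces $\partial_j h_k-\partial_k h_j$ has vanishing derivatives of order $m-1$ and hence, after a careful bookkeeping of the polynomial ambiguities, can be arranged to be zero. Applying the $m=1$ case to $(h_1,\dots,h_\dmn)$ produces $f\in\dot W^1_{m,loc}(\Omega)$ with $\partial_k f=h_k$, whence $\partial^{\gamma+\vec e_k}f=\partial^\gamma h_k=f_{\gamma+\vec e_k}$ for all $\abs\gamma=m-1$ and all $k$, which is exactly $\partial^\alpha f=f_\alpha$ for all $\abs\alpha=m$.

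The main obstacle is the bookkeeping of the polynomial (lower-order) ambiguities: each application of the inductive hypothesis determines $h_k$ only modulo polynomials of degree $m-2$, and to run the $m=1$ step one needs the \emph{specific} representatives $h_k$ to form a genuinely curl-free field, not merely a field curl-free modulo such polynomials. The clean way to handle this is to work on a simply connected exhausting sequence of smooth subdomains, mollify all the $f_\alpha$ (so that everything is $C^\infty$ and the compatibility relations become pointwise PDE identities $\partial_j f_\beta=\partial_k f_\alpha$), construct $f$ there by iterated integration using the Poincar\'e lemma, check that the resulting $f$ is independent of the mollification parameter in the limit and consistent across the exhaustion, and only then pass to the limit; the distributional relations in the hypothesis guarantee the needed consistency. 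I would also remark that simple connectivity is used exactly once per level of the induction, in the $m=1$ Poincar\'e-lemma step, and that local integrability of the $f_\alpha$ is all that is needed since every intermediate $h_k$ is obtained by integrating and hence lies in the appropriate local Sobolev space.
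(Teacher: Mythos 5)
Your overall plan (double induction on $m$, with the classical curl-free criterion as the base case and a mollification argument to handle $L^1_{loc}$ data) is the same as the paper's, but your inductive decomposition runs the reduction in the opposite order, and this is exactly what creates the ``polynomial bookkeeping'' problem you flag. You fix a \emph{direction} $k$, apply the order-$(m-1)$ case to the array $\bigl(f_{\gamma+\vec e_k}\bigr)_{\abs\gamma=m-1}$ to produce $h_k$, and then try to apply the $m=1$ case to $(h_1,\dots,h_d)$. But each $h_k$ is only determined modulo a polynomial of degree $m-2$, and what you can actually deduce from the data is that $\partial_j h_k - \partial_k h_j$ has vanishing weak derivatives of order $m-2$ (not $m-1$ as you wrote), i.e.\ it is a polynomial of degree at most $m-3$ that need not vanish. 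To kill it you would have to solve $\partial_j q_k - \partial_k q_j = -(\partial_j h_k-\partial_k h_j)$ in polynomials of degree $\leq m-2$; this is a polynomial Poincar\'e lemma for closed $2$-forms, which is true (and the Jacobi identity you need is automatic from symmetry of mixed partials) but is a genuinely separate lemma that your sketch does not supply. Your alternative route via mollification on an exhaustion would also work, but would essentially re-prove the lemma from scratch rather than using induction.

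The paper avoids the ambiguity altogether by running the reduction the other way. Fix a multiindex $\gamma$ with $\abs\gamma = m-1$ and apply only the $m=1$ case to the vector field $\bigl(f_{\gamma+\vec e_1},\dots,f_{\gamma+\vec e_d}\bigr)$, obtaining a single potential $f_\gamma$ with $\partial_j f_\gamma = f_{\gamma+\vec e_j}$. Now check that the array $\bigl(f_\gamma\bigr)_{\abs\gamma=m-1}$ satisfies the order-$(m-1)$ hypothesis: if $\gamma+\vec e_j = \delta+\vec e_k$ then
\begin{equation*}
\bigl\langle\partial_j\varphi, f_\gamma\bigr\rangle_\Omega
= -\bigl\langle\varphi, f_{\gamma+\vec e_j}\bigr\rangle_\Omega
= -\bigl\langle\varphi, f_{\delta+\vec e_k}\bigr\rangle_\Omega
= \bigl\langle\partial_k\varphi, f_\delta\bigr\rangle_\Omega.
\end{equation*}
This identity involves only the \emph{derivatives} $\partial_j f_\gamma$ and $\partial_k f_\delta$, which are pinned down exactly by the construction; the additive-constant ambiguity in each $f_\gamma$ never appears because it is annihilated by $\partial_j\varphi$. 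One then applies the inductive hypothesis at level $m-1$ to $\bigl(f_\gamma\bigr)$ and is done. In short: reduce the order by one first, then invoke the lower-order case, rather than invoking the lower-order case first and then trying to integrate once more. If you reorganize your inductive step this way, the compatibility verification becomes a two-line computation and the polynomial Poincar\'e lemma is not needed at all.

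One more small point: your base case sketch is fine, but note that the mollification argument needs a uniform $L^p$ bound on the local potentials $f^\varepsilon$ to extract a weak limit; the paper gets this from the Gagliardo--Nirenberg--Sobolev inequality after normalizing $\int_B f^\varepsilon = 0$, and then patches across balls using connectedness/simple-connectedness of $\Omega$. Your phrase ``invoke the Poincar\'e lemma for currents/distributions directly'' elides this; if you go the mollification route, the uniform bound and the patching step should be made explicit.
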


\begin{proof} 
If $m=1$ and the functions $f_\alpha$ are $C^1$, then this lemma is merely the classical result that irrotational vector fields may be written as gradients. We begin by generalizing to the case $m=1$ and the case $f_\alpha\in L^1_{loc}(\Omega)$.
We let $f_j=f_{\vec e_j}$.


Let $\eta$ be a smooth, nonnegative function supported in $B(0,1)$ with $\int \eta = 1$, and let $\eta_\varepsilon(x)=\varepsilon^{-\pdmn}\eta(x/\varepsilon)$. Let $f^\varepsilon_j=f_j*\eta_\varepsilon$, so that $f^\varepsilon_j$ is smooth. By assumption, $\partial_k f^\varepsilon_j(x) = \partial^j f^\varepsilon_k(x)$ provided $\varepsilon < \dist(x,\Omega^C)$.
Let $B$ be a ball  with $\overline B\subset\Omega$, and assume that $\varepsilon<\dist(B,\Omega^C)/2$. Then there is some function $f^\varepsilon$ such that $\partial^j f^\varepsilon = f^\varepsilon_j$ in~$B$.

Now renormalize $f^\varepsilon$ so that $\int_B f^\varepsilon=0$. By Lemma~\ref{lem:GNS:ball}, because $\nabla f^\varepsilon \in L^1(B)$, we have that $f^\varepsilon\in L^p(B)$, uniformly in~$\varepsilon$, for some $p>1$. Since $L^p(B)$ is weakly sequentially compact, we have that some subsequence $f^{\varepsilon_i}$ has a weak limit~$f$.

If $\varphi$ is smooth and supported in~$B$, then
\begin{equation*}\bigl\langle \partial^j \varphi, f\bigr\rangle_V 
= \lim_{i\to\infty}\bigl\langle \partial^j \varphi, f^{\varepsilon_i} \bigr\rangle_V 
= -\lim_{i\to\infty}\bigl\langle \varphi, f^{\varepsilon_i}_j \bigr\rangle_V 
= -\bigl\langle \varphi, f_j\bigr\rangle_V 
\end{equation*}
and so $f_j$ is the weak derivative of~$f$ in the $j$th direction for all $1\leq j\leq \dmn$.

We may cover any compact subset $\overline V\subset \Omega$ by such balls~$B$; renormalizing $f$ again, so as to be defined compatibly on different balls, we see that we may extend $f$ to a function in $L^1_{loc}(\Omega)$.

Now we work by induction. Suppose that the theorem is true for $m=1$ and for $m=M-1$. We wish to show that the theorem is true for $m=M$ as well.

Fix some $\gamma$ with $\abs{\gamma}=M-1$, and let $f_j=f_{\gamma+\vec e_j}$. By assumption \begin{equation*}
\bigl\langle \partial_k  \varphi, f_j\bigr\rangle_\Omega
=\bigl\langle \partial_k  \varphi, f_{\gamma+\vec e_j}\bigr\rangle_\Omega
=\bigl\langle \partial_j  \varphi,  f_{\gamma+\vec e_k}\bigr\rangle_\Omega
=\bigl\langle \partial_j  \varphi,  f_k\bigr\rangle_\Omega
\end{equation*}
for all appropriate test functions~$\varphi$.

Because the theorem is valid for $m=1$, there is some $f=f_\gamma\in \dot W^1_{1,loc}(\Omega)$ such that $\partial_jf_\gamma=f_{\gamma+\vec e_j}$ in the weak sense.

If $\abs{\gamma}=\abs{\delta}=M-1$, and $\gamma+\vec e_j=\delta+\vec e_k$, then
\begin{equation*}
\bigl\langle \partial_j  \varphi, f_\gamma\bigr\rangle_\Omega
= -\bigl\langle  \varphi,f_{\gamma+\vec e_j}\bigr\rangle_\Omega
= -\bigl\langle  \varphi,f_{\delta+\vec e_k}\bigr\rangle_\Omega
= \bigl\langle \partial_k  \varphi,f_\delta\bigr\rangle_\Omega
\end{equation*}
and so the array $\begin{pmatrix} f_\gamma\end{pmatrix}_{\abs{\gamma}=M-1}$ satisfies the conditions of the theorem with $m=M-1$. Because the theorem is true for $m=M-1$, we have that there is some $f\in \dot W^{1}_{M-1,loc}(\Omega)$ such that $f_\gamma=\partial^\gamma f$ for all $\abs{\gamma}=M-1$; because $\partial_k f_\gamma=f_{\gamma+\vec e_k}$ we have that $f_\alpha=\partial^\alpha f$ for all $\abs{\alpha}=m$, and so the theorem is true for $m=M$ as well. This completes the proof.
\end{proof}

We now consider the adjoint operator to the Newton potential.

\begin{lem} \label{lem:newton:adjoint}
The adjoint $(\nabla^m\vec\Pi^L)^*$ to the operator $\nabla^m\vec\Pi^L$ is $\nabla^m\vec\Pi^{L^*}$.
\end{lem}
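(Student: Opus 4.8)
The plan is to establish the operator identity by verifying the defining weak formulation, with essentially no analytic input beyond the Lax--Milgram construction of $\vec\Pi^L$. Concretely, I would show that
\[\bigl\langle \nabla^m\vec\Pi^L\mydot F,\mydot G\bigr\rangle_{\R^\dmn}=\bigl\langle \mydot F,\nabla^m\vec\Pi^{L^*}\mydot G\bigr\rangle_{\R^\dmn}\]
for all $L^2$ arrays $\mydot F$ and $\mydot G$; since the adjoint of a bounded operator is unique, this is exactly the assertion. First I would note that the claim makes sense: $\re\langle\nabla^m\vec\varphi,\mat A^*\nabla^m\vec\varphi\rangle_{\R^\dmn}=\re\langle\nabla^m\vec\varphi,\mat A\nabla^m\vec\varphi\rangle_{\R^\dmn}$, so $\mat A^*$ again satisfies \eqref{eqn:elliptic:bounded:2} and \eqref{eqn:elliptic}, the potential $\vec\Pi^{L^*}$ is defined by the same argument, and $\nabla^m\vec\Pi^{L^*}$ is, like $\nabla^m\vec\Pi^L$, a bounded operator on the Hilbert space of $L^2$ arrays.

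The computation would use the elementary identity $\langle\mat A\mydot H,\mydot K\rangle=\langle\mydot H,\mat A^*\mydot K\rangle$ (immediate from $(A^*)^{jk}_{\alpha\beta}=\overline{A^{kj}_{\beta\alpha}}$) together with \eqref{dfn:newton} applied twice. Since $\vec\Pi^L\mydot F$ and $\vec\Pi^{L^*}\mydot G$ lie in $\dot W^2_m(\R^\dmn)$ and \eqref{dfn:newton} involves only their $m$th-order gradients, they are admissible test functions even though they are defined only modulo polynomials of degree $m-1$. Using \eqref{dfn:newton} for $L^*$ with test function $\vec\Pi^L\mydot F$,
\[\bigl\langle\nabla^m\vec\Pi^L\mydot F,\mydot G\bigr\rangle_{\R^\dmn}=\bigl\langle\nabla^m\vec\Pi^L\mydot F,\mat A^*\nabla^m\vec\Pi^{L^*}\mydot G\bigr\rangle_{\R^\dmn}=\bigl\langle\mat A\nabla^m\vec\Pi^L\mydot F,\nabla^m\vec\Pi^{L^*}\mydot G\bigr\rangle_{\R^\dmn};\]
taking complex conjugates and then using \eqref{dfn:newton} for $L$ with test function $\vec\Pi^{L^*}\mydot G$ rewrites the last term as $\overline{\langle\nabla^m\vec\Pi^{L^*}\mydot G,\mat A\nabla^m\vec\Pi^L\mydot F\rangle_{\R^\dmn}}=\overline{\langle\nabla^m\vec\Pi^{L^*}\mydot G,\mydot F\rangle_{\R^\dmn}}=\langle\mydot F,\nabla^m\vec\Pi^{L^*}\mydot G\rangle_{\R^\dmn}$, which is the desired identity.

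I do not anticipate a genuine obstacle; the only things to watch are the bookkeeping of complex conjugates (the inner product is conjugate-linear in the first slot, and $\mat A^*$ rather than $\overline{\mat A}$ is the array that moves across the pairing) and the admissibility of the equivalence classes $\vec\Pi^L\mydot F,\vec\Pi^{L^*}\mydot G$ as test functions. For comparison, the role of Lemma~\ref{lem:curlfree} would be in a slightly longer alternative: treating $(\nabla^m\vec\Pi^L)^*\mydot G$ as an a priori abstract $L^2$ array, one checks it satisfies the compatibility hypotheses of that lemma, concludes it equals $\nabla^m\vec w$ for some $\vec w\in\dot W^1_{m,loc}(\R^\dmn)$, and then verifies $L^*\vec w=\Div_m\mydot G$ and invokes uniqueness of the Newton potential to identify $\vec w$ with $\vec\Pi^{L^*}\mydot G$; the direct argument above avoids having to recognise the adjoint's range as a space of $m$th-order gradients.
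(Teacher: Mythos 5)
Your proof is correct, and it takes a genuinely shorter route than the paper's. The paper first invokes Lemma~\ref{lem:curlfree} to show that $(\nabla^m\vec\Pi^L)^*\mydot F$ lies in the range of $\nabla^m:\dot W^2_m(\R^\dmn)\to L^2(\R^\dmn)$ (by checking the curl-free compatibility relations, using the observation that $\vec\Pi^L(\partial_j\varphi\,\mydot e_{i,\beta}-\partial_k\varphi\,\mydot e_{i,\alpha})=0$), and only then verifies that the resulting $\vec u$ satisfies the weak formulation for $L^*$ via the identity \eqref{eqn:newton:identity}. You bypass the range-identification step entirely by testing \eqref{dfn:newton} for $L^*$ against the already-constructed $\vec\Pi^L\mydot F$ and \eqref{dfn:newton} for $L$ against $\vec\Pi^{L^*}\mydot G$; since both potentials live in $\dot W^2_m(\R^\dmn)$ they are admissible test functions, and the two applications together give the adjoint pairing $\langle\nabla^m\vec\Pi^L\mydot F,\mydot G\rangle=\langle\mydot F,\nabla^m\vec\Pi^{L^*}\mydot G\rangle$ directly. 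You correctly flag the two delicate points (conjugate-linearity in the first slot, and $\langle\mydot H,\mat A^*\mydot K\rangle=\langle\mat A\mydot H,\mydot K\rangle$), and your closing remark accurately describes the paper's alternative as the ``slightly longer'' version. The one thing your approach gives up is that it does not by itself establish that \emph{every} element of the range of $(\nabla^m\vec\Pi^L)^*$ is a gradient — but that fact is not needed for the lemma, since once the adjoint identity holds for all $\mydot F,\mydot G\in L^2$, uniqueness of the Hilbert-space adjoint forces $(\nabla^m\vec\Pi^L)^*=\nabla^m\vec\Pi^{L^*}$.
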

\begin{proof}
Observe that $\nabla^m\vec\Pi^L$ is bounded on $L^2(\R^\dmn)$ and so $(\nabla^m\vec\Pi^L)^*$ is as well; that is, $(\nabla^m\vec\Pi^L)^*\mydot F$ is an element of $L^2(\R^\dmn)$.
We first show that it is an element of the subspace of gradients of $\dot W^2_m(\R^\dmn)$-functions, that is, that there is some function $\vec u\in \dot W^2_m(\R^\dmn)$ such that $(\nabla^m\vec\Pi^L)^*\mydot F=\nabla^m \vec u$.

By Lemma~\ref{lem:curlfree}, it suffices to show that if $1\leq i\leq N$, if $\varphi$ is smooth and compactly supported in~$\Omega$, and if $\alpha+\vec e_k=\beta+\vec e_j$, then 
\begin{equation*}\bigl\langle \partial_j\varphi \,\mydot e_{i,\beta}, (\nabla^m\vec\Pi^L)^*\mydot F\bigr\rangle_\Omega
=\bigl\langle\partial_k\varphi \, \mydot e_{i,\alpha}, (\nabla^m\vec\Pi^L)^*\mydot F\bigr\rangle_\Omega.
\end{equation*}
That is, we seek to show that
\begin{equation*}
\bigl\langle \nabla^m\vec\Pi^L (\partial_j\varphi\, \mydot e_{i,\beta}-\partial_k\varphi\,\mydot e_{i,\alpha}),\mydot F\bigr\rangle_\Omega=0.
\end{equation*}
But $\bigl\langle\nabla^m\vec\eta,\partial_j\varphi \,\mydot e_{i,\beta}-\partial_k\varphi\,\mydot e_{i,\alpha}\bigr\rangle_{\R^\dmn}=0$ for all $\vec\eta$ smooth and compactly supported, and so $\vec\Pi^L (\partial_j\varphi\, \mydot e_{i,\beta}-\partial_k\varphi\,\mydot e_{i,\alpha})=0$.

Let $\vec u$ satisfy $\nabla^m\vec u=(\nabla^m\vec\Pi^L)^*\mydot F$. We now show that $\vec u=\vec\Pi^{L^*}\mydot F$. 
Choose some  $\vec \varphi$ smooth and compactly supported in $\R^\dmn$.
Then
\begin{align*}
\bigl\langle \nabla^m\vec \varphi, \mat A^*\nabla^m \vec u\bigr\rangle_{\R^\dmn} 
&=
	\bigl\langle \mat A\nabla^m\vec \varphi, (\nabla^m\vec\Pi^L)^*\mydot F \bigr\rangle_{\R^\dmn} 
\\&=
	\bigl\langle \nabla^m\vec\Pi^L(\mat A\nabla^m\vec \varphi), \mydot F \bigr\rangle_{\R^\dmn} 
.\end{align*}
By formula~\eqref{eqn:newton:identity}, we have that $\nabla^m\vec\Pi^L(\mat A\nabla^m\vec \varphi)=\nabla^m\vec\varphi$.
Thus
\begin{align*}
\bigl\langle \nabla^m\vec \varphi, \mat A^*\nabla^m \vec u\bigr\rangle_{\R^\dmn} 
=
	\bigl\langle \nabla^m\vec \varphi, \mydot F \bigr\rangle_{\R^\dmn} 
\end{align*}
for all $\vec\varphi$ smooth and compactly supported. Because $\vec\Pi^{L^*}\mydot F$ is the unique element of $\dot W^2_m(\R^\dmn)$ with this property, we must have that $\vec u = \vec\Pi^{L^*}\mydot F$ and the proof is complete.
\end{proof}

We conclude this section by showing that the Newton potential is bounded on a range of $L^p$ spaces.
\begin{lem}\label{lem:Newton:bounded} Let $L$ be an operator of order $2m$ that satisfies the bounds \eqref{eqn:elliptic:bounded:2} and \eqref{eqn:elliptic}, and let $p_L^+$ be as in Theorem~\ref{thm:Meyers}. Let $1/p_L^++1/p_L^-=1$. If $p^-_{L^*}<p<p^+_L$, then $\vec\Pi^L$ extends to an operator that is bounded $L^p(\R^\dmn)\mapsto\dot W^p_m(\R^\dmn)$.
\end{lem}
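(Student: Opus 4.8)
The plan is to prove the equivalent statement that the operator $T:=\nabla^m\vec\Pi^L$, which is already bounded on $L^2(\R^\dmn)$ (as recorded just after \eqref{dfn:newton}), extends to a bounded operator on $L^p(\R^\dmn)$ for every $p$ with $p^-_{L^*}<p<p^+_L$. Since $\doublebar{\vec\Pi^L\mydot F}_{\dot W^p_m(\R^\dmn)}=\doublebar{\nabla^m\vec\Pi^L\mydot F}_{L^p(\R^\dmn)}$ and $L^2\cap L^p$ is dense in $L^p$ for $1<p<\infty$, this gives the lemma. I would treat the ranges $2\le p<p^+_L$ and $p^-_{L^*}<p\le 2$ separately: the first comes from Meyers's inequality for $L$ itself, and the second from the first range applied to the adjoint operator $L^*$ together with the duality identity $(\nabla^m\vec\Pi^L)^*=\nabla^m\vec\Pi^{L^*}$ of Lemma~\ref{lem:newton:adjoint}.

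For $2\le p<p^+_L$, the first step is to record a local reverse H\"older estimate for $T$. Fix a ball $B=B(x_0,r)$ and let $\mydot F\in L^2(\R^\dmn)$ vanish on $B(x_0,2r)$. Then $\vec v:=\vec\Pi^L\mydot F$ lies in $\dot W^2_m(\R^\dmn)$ and $L\vec v=\Div_m\mydot F=0$ in $B(x_0,2r)$, so the interior case of Theorem~\ref{thm:Meyers}, applied with the ball $B(x_0,2r)$ playing the role of its ``$B(x_0,R)$'' and with exponent $p=2$, gives for every $q\in(2,p^+_L)$
\begin{equation*}
\Bigl(\fint_{B(x_0,r)}\abs{T\mydot F}^q\Bigr)^{1/q}\leq C\Bigl(\fint_{B(x_0,2r)}\abs{T\mydot F}^2\Bigr)^{1/2};
\end{equation*}
here the $\mydot F$-term on the right of \eqref{eqn:Meyers} is zero since $\mydot F\equiv 0$ on $B(x_0,2r)$, the $\delta$-term is zero since the strong G\r{a}rding inequality \eqref{eqn:elliptic} is exactly \eqref{eqn:elliptic:AusQ00} with $\delta=0$, and the powers of $r$ cancel when norms are rewritten as averages. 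Combined with the $L^2$-boundedness of $T$ and the trivial bound $\doublebar{T(\1_{B(x_0,2r)}\mydot F)}_{L^2(\R^\dmn)}\le C\doublebar{\mydot F}_{L^2(B(x_0,2r))}$ controlling the ``local part'' $T(\1_{B(x_0,2r)}\mydot F)$, this reverse H\"older estimate is precisely the hypothesis needed to run the standard real-variable (good-$\lambda$, Calder\'on--Zygmund decomposition) extrapolation argument --- the operator-theoretic counterpart of the self-improvement Lemma~\ref{lem:Giaquinta}, used for elliptic operators in Shen's work --- which upgrades an $L^2$-bounded operator satisfying such an estimate to one bounded on $L^p$ for $2\le p<q$. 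Letting $q\uparrow p^+_L$ yields the a priori bound $\doublebar{T\mydot F}_{L^p}\le C\doublebar{\mydot F}_{L^p}$ for $\mydot F\in L^2\cap L^p$ whenever $2\le p<p^+_L$.

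For $p^-_{L^*}<p\le 2$, observe that $\mat A^*$ satisfies \eqref{eqn:elliptic:bounded:2} and \eqref{eqn:elliptic} with the same constants, so the previous paragraph applied to $L^*$ shows that $\nabla^m\vec\Pi^{L^*}$ satisfies $\doublebar{\nabla^m\vec\Pi^{L^*}\mydot G}_{L^{p'}}\le C\doublebar{\mydot G}_{L^{p'}}$ for $\mydot G\in L^2\cap L^{p'}$ whenever $2\le p'<p^+_{L^*}$. By Lemma~\ref{lem:newton:adjoint} this operator is the $L^2$-adjoint of $T$, so for $\mydot F\in L^2\cap L^p$ and $\mydot G\in L^2\cap L^{p'}$ we have $\abs{\bigl\langle T\mydot F,\mydot G\bigr\rangle_{\R^\dmn}}=\abs{\bigl\langle\mydot F,\nabla^m\vec\Pi^{L^*}\mydot G\bigr\rangle_{\R^\dmn}}\le C\doublebar{\mydot F}_{L^p}\doublebar{\mydot G}_{L^{p'}}$; taking the supremum over $\mydot G$ in a dense subset of the unit ball of $L^{p'}$ gives $\doublebar{T\mydot F}_{L^p}\le C\doublebar{\mydot F}_{L^p}$ for $(p^+_{L^*})'=p^-_{L^*}<p\le 2$. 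Combining the two ranges and extending $\vec\Pi^L$ from $L^2\cap L^p$ to $L^p$ by density completes the proof.

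The only step that is not essentially bookkeeping is the passage from the local reverse H\"older inequality (plus $L^2$-boundedness) to genuine $L^p$-boundedness; this is the Calder\'on--Zygmund/good-$\lambda$ mechanism behind Shen's extrapolation lemma, and the work on our side is just to check its hypotheses, which reduces to the short computation above. The duality step, the rescaling from norms to averages, the verification that $L^*$ is elliptic with the same constants, and the vanishing of the lower-order terms in \eqref{eqn:Meyers} are all routine.
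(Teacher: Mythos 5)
Your proof is correct, but it uses heavier machinery than the paper does, and it is worth seeing why the paper can avoid it. Your step two splits $\mydot F$ into a piece supported near the ball (controlled by $L^2$-boundedness of $T$) and a piece supported away from the ball (for which $\vec v=\vec\Pi^L\mydot F$ solves the homogeneous equation on $B(x_0,2r)$, yielding a reverse H\"older estimate), and then feeds this structure into Shen's Calder\'on--Zygmund/good-$\lambda$ extrapolation lemma. That route works, but the decomposition and the invocation of Shen's theorem are both unnecessary here, precisely because the paper's Theorem~\ref{thm:Meyers} is an \emph{inhomogeneous} reverse H\"older estimate --- it already carries the source term $\mydot F$ on its right-hand side. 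So the paper simply applies \eqref{eqn:Meyers} with exponents $q=p$ and $p=2$ and $R=2r$ directly to $\vec u=\vec\Pi^L\mydot F$ for arbitrary $\mydot F\in L^2\cap L^p$ (no support restriction), obtaining
\begin{equation*}
\biggl(\int_{B(x_0,r)} \abs{\nabla^m\vec u}^p\biggr)^{1/p}
\leq \frac{C(p)}{r^{\pdmn/2-\pdmn/p}}
\biggl(\int_{B(x_0,2r)} \abs{\nabla^m\vec u}^2\biggr)^{1/2}
+ C(p)\biggl(\int_{B(x_0,2r)} \abs{\mydot F}^p\biggr)^{1/p},
\end{equation*}
and then lets $r\to\infty$. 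Since $p>2$ forces $\pdmn/2-\pdmn/p>0$ and $\nabla^m\vec u\in L^2(\R^\dmn)$, the first term vanishes in the limit, giving $\doublebar{\nabla^m\vec\Pi^L\mydot F}_{L^p(\R^\dmn)}\leq C(p)\doublebar{\mydot F}_{L^p(\R^\dmn)}$ with no covering or good-$\lambda$ argument at all. Your duality step for $p^-_{L^*}<p\leq 2$ matches the paper's. In short: both proofs are correct; the paper's is the more economical one because it exploits the inhomogeneous form of the reverse H\"older inequality rather than reducing to the homogeneous case plus an operator-theoretic extrapolation lemma.
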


\begin{proof}
Suppose first that $2<p<p^+_L$. Let $\mydot F\in L^2(\R^\dmn)\cap L^p(\R^\dmn)$ and let $\vec u=\vec \Pi^L\mydot F$. By Theorem~\ref{thm:Meyers},
\begin{align*}\biggl(\int_{B(x_0,r)} \abs{\nabla^m\vec u}^p\biggr)^{1/p}
&\leq \frac{C(p)}{r^{\pdmn/2-\pdmn/p}}
\biggl(\int_{B(x_0,2r)} \abs{\nabla^m\vec u}^2\biggr)^{1/2}
+ C(p)\biggl(\int_{B(x_0,2r)} \abs{\mydot F}^p\biggr)^{1/p}
.\end{align*}
By taking the limit as $r\to\infty$, we see that $\doublebar{\nabla^m\vec\Pi^L\mydot F}_{L^p(\R^\dmn)}\leq C(p)\doublebar{\mydot F}_{L^p(\R^\dmn)}$, and so $\vec\Pi^L$ extends to an operator that is bounded $L^p(\R^\dmn)\mapsto\dot W^p_m(\R^\dmn)$.

By a similar argument $\nabla^m\vec\Pi^{L^*}$ is bounded $L^{p'}(\R^\dmn)\mapsto L^{p'}(\R^\dmn)$ for all $2<p'<p^+_{L^*}$; thus by duality $\nabla^m\vec\Pi^{L}$ is bounded $L^{p}(\R^\dmn)\mapsto L^{p}(\R^\dmn)$ for all $p^-_{L^*}<p<2$, as desired.
\end{proof}

\subsection{The fundamental solution for operators of high order}
\label{sec:fundamental:high}

This section will be devoted to the proof of the following theorem.

\begin{thm}\label{thm:fundamental:high}
Let $L$ be an operator of order $2m>\dmn$ that satisfies the bounds \eqref{eqn:elliptic:bounded:2} and \eqref{eqn:elliptic}. 
For each $z_0\in\R^\dmn$ and each $r>0$, there is an array of functions $E^L_{j,k,z_0,r}(x,y)$ with the following properties.

First, if $x\in\R^\dmn$ and $\abs{\beta}=m$, then $f(y)=\partial_y^\beta E^L_{j,k,z_0,r}(x,y)$ lies in $L^2(\R^\dmn)$, and if
$\mydot F\in L^2(\R^\dmn)$, then for all $1\leq j\leq N$, we have that
\begin{equation}
\label{eqn:fundamental}
\Pi^L_{j}\mydot F(x) 
	= \sum_{k=1}^N \sum_{\abs{\beta}=m} \int_{\R^\dmn} 	\partial_y^\beta E^L_{j,k,z_0,r}(x,y)\,F_{k,\beta}(y)\,dy
\end{equation}as $\dot W^m_2(\R^\dmn)$-functions, that is, up to adding polynomials of order $m-1$.

Next, for any $x_0$ and~$y_0$, we have the bounds
\begin{equation}\label{eqn:fundamental:bound}
\int_{B(x_0,r)} \int_{B(y_0,r)} \abs{\nabla_x^m \nabla_y^m E^L_{j,k,z_0,r}(x,y)}^2\,dy\,dx
\leq C,\qquad r=\abs{x_0-y_0}/3.
\end{equation}
If $1\leq j\leq N$, $1\leq k\leq N$, and if $\alpha$, $\beta$ are multiindices with $\abs{\alpha}=\abs{\beta}=m$, then
\begin{equation}\label{eqn:fundamental:symmetric}
\partial_x^\alpha \partial_y^\beta E^{L^*}_{j,k,z_0,r}(x,y)
= \overline{\partial_y^\beta \partial_x^\alpha E^{L}_{j,k,z_0,r}(y,x)}.
\end{equation}

Finally, if $\abs{x_0-z_0}=\abs{y_0-z_0}=\abs{x_0-y_0}=3r$, then we have the bounds
\begin{equation}\label{eqn:fundamental:bound:m-1}
\int_{B(x_0,r)} \int_{B(y_0,r)} \abs{\nabla_x^{m-q} \nabla_y^{m-s} E^L_{j,k,z_0,r}(x,y)}^2\,dy\,dx
\leq Cr^{2q+2s}
\end{equation}
whenever $0\leq q\leq m$ and $0\leq s\leq m$.
\end{thm}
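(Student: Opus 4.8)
The plan is to build $\mat E^L_{z_0,r}$ as the integral kernel of the Newton potential $\vec\Pi^L$, in the spirit of the second-order arguments of \cite{GruW82,HofK07}, and then to push the Caccioppoli and Meyers estimates of the previous sections through this kernel. Throughout we use that, since $2m>\dmn$, Lemma~\ref{lem:Morrey} makes every $\dot W^2_m$ function H\"older continuous; to turn point evaluation into a genuine number we fix, for $\vec u\in\dot W^2_m(\R^\dmn)$, the representative $\vec u_{z_0,r}$ obtained by subtracting the unique polynomial of degree at most $m-1$ for which $\fint_{B(z_0,r)}\nabla^i\vec u_{z_0,r}=0$ for $0\le i\le m-1$.

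\emph{Construction and the representation formula.} Fix $x\in\R^\dmn$ and $1\le j\le N$. I would first check that $\mydot F\mapsto(\vec\Pi^L\mydot F)_{z_0,r,j}(x)$ is a bounded functional on $L^2(\R^\dmn)$: with $\vec u=\vec\Pi^L\mydot F$, one estimates $\abs{\vec u_{z_0,r}(x)}$ by running Morrey's inequality and the Poincar\'e inequality along a finite chain of balls from $B(z_0,r)$ to $B(x,r)$, controlling the difference of normalizing polynomials at consecutive balls and using $\doublebar{\nabla^m\vec u}_{L^2(\R^\dmn)}\le C\doublebar{\mydot F}_{L^2(\R^\dmn)}$. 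Riesz representation then yields $\mydot G^{x,j}\in L^2(\R^\dmn)$ with $(\vec\Pi^L\mydot F)_{z_0,r,j}(x)=\langle\mydot G^{x,j},\mydot F\rangle_{\R^\dmn}$. For $\mydot F=\partial_a\varphi\,\mydot e_{k,\beta}-\partial_b\varphi\,\mydot e_{k,\alpha}$ with $\beta+\vec e_a=\alpha+\vec e_b$ one has $\vec\Pi^L\mydot F=0$ (exactly as in the proof of Lemma~\ref{lem:newton:adjoint}), hence $\langle\mydot G^{x,j},\mydot F\rangle_{\R^\dmn}=0$; so for each $k$ the array $\bigl((G^{x,j})_{k,\beta}\bigr)_{\abs{\beta}=m}$ satisfies the hypotheses of Lemma~\ref{lem:curlfree}, and we may define a function $y\mapsto E^L_{j,k,z_0,r}(x,y)$ with $\partial_y^\beta E^L_{j,k,z_0,r}(x,y)=\overline{(G^{x,j})_{k,\beta}(y)}$. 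This gives \eqref{eqn:fundamental} together with $\partial_y^\beta E^L_{j,k,z_0,r}(x,\,\cdot\,)\in L^2(\R^\dmn)$. Since replacing $(z_0,r)$ alters $\vec u_{z_0,r}$ only by a polynomial of degree at most $m-1$, the mixed derivatives $\partial_x^\alpha\partial_y^\beta E^L_{j,k,z_0,r}$ with $\abs{\alpha}=\abs{\beta}=m$ are independent of $z_0$ and $r$.

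\emph{Symmetry.} By Lemma~\ref{lem:newton:adjoint}, $\langle\nabla^m\vec\Pi^L\mydot F,\mydot G\rangle_{\R^\dmn}=\langle\mydot F,\nabla^m\vec\Pi^{L^*}\mydot G\rangle_{\R^\dmn}$ for all $\mydot F,\mydot G\in L^2(\R^\dmn)$. Applying $\nabla^m_x$ to \eqref{eqn:fundamental} (once for $L$, once for $L^*$), which is legitimate because \eqref{eqn:fundamental} is an identity in $\dot W^2_m(\R^\dmn)$ on which $\nabla^m$ is continuous, and expanding both sides of the adjoint identity, one equates the resulting kernels and obtains \eqref{eqn:fundamental:symmetric}.

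\emph{The quantitative bounds.} This is the technical core. Because the quantities in \eqref{eqn:fundamental:bound} do not depend on $z_0$, we may place $z_0$ so that $B(z_0,r)$, $B(x_0,r)$ and $B(y_0,r)$ are mutually separated, as in the hypotheses of \eqref{eqn:fundamental:bound:m-1}. The first observation is that, combining the chain bound above with Lemma~\ref{lem:newton:adjoint}, $E^L_{j,k,z_0,r}(x,\,\cdot\,)$ equals (up to a polynomial in $y$ of degree at most $m-1$) the $k$th component of $\overline{\vec\Pi^{L^*}\mydot K^{x,j}_{z_0,r}}$, where $\mydot K^{x,j}_{z_0,r}\in L^2(\R^\dmn)$ represents $\vec w\mapsto\vec w_{z_0,r,j}(x)$ on the subspace of $m$th gradients; hence for fixed $x$ it solves $L^*(\,\cdot\,)=0$ in $B(y_0,2r)$ and satisfies $\doublebar{\nabla_y^m E^L_{j,k,z_0,r}(x,\,\cdot\,)}_{L^2(\R^\dmn)}\le Cr^{m-\dmn/2}$. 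Feeding this into Caccioppoli~\eqref{eqn:Caccioppoli:lower}, the Poincar\'e inequality and Morrey's inequality gives, for $0\le s\le m$, the bounds $\doublebar{\nabla_y^{m-s}E^L_{j,k,z_0,r}(x,\,\cdot\,)}_{L^2(B(y_0,r))}\le Cr^{m+s-\dmn/2}$ and $\abs{E^L_{j,k,z_0,r}(x,y)}\le Cr^{2m-\dmn}$ for $y\in B(y_0,r)$; integrating in $x$ over $B(x_0,r)$ yields \eqref{eqn:fundamental:bound:m-1} whenever $q=m$, and the symmetric argument (using $E^{L^*}$ and \eqref{eqn:fundamental:symmetric}) yields it whenever $s=m$. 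To reach derivatives in both variables, and in particular to prove \eqref{eqn:fundamental:bound}, I would argue that for a.e.\ $y\in B(y_0,r)$ and each multiindex $\gamma$ the function $x\mapsto\partial_y^\gamma\vec E^L_{j,k,z_0,r}(x,y)$ solves $L(\,\cdot\,)=0$ in $B(x_0,2r)$, apply the interior Caccioppoli inequality~\eqref{eqn:Caccioppoli:m} in the $x$ variable — which removes all $x$-derivatives from the right-hand side — and then integrate in $y$, where the surviving $\nabla_y^m$ terms are controlled by the $q=m$ bounds just obtained. The main obstacle is making this last step rigorous: to see that $\partial_y^\gamma\vec E^L_{z_0,r}(\,\cdot\,,y)$ is a weak solution in $x$ for $\abs{\gamma}$ up to $m$ one integrates the defining identity against a test function in $y$ and integrates by parts, which presupposes local integrability of $\nabla_x^m\nabla_y^{\le m}E^L_{z_0,r}$. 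I expect this to be handled either by a smooth-coefficient approximation (proving the estimates with constants independent of the regularization and passing to the limit) or by an induction on the number of $y$-derivatives, at each stage using Meyers's inequality~\eqref{eqn:Meyers} and the Gagliardo--Nirenberg--Sobolev inequality (Lemma~\ref{lem:GNS:ball}) to raise the integrability of the lower-order $x$-derivatives of solutions far enough — this is where the standing assumption $2m>\dmn$ is essential — to license the next integration by parts.
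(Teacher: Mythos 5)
Your construction of the kernel (Riesz representation followed by Lemma~\ref{lem:curlfree}), the use of Lemma~\ref{lem:newton:adjoint} for the symmetry of the top-order derivatives, and the identification of the gap are all on the right track and follow the paper's outline. Your normalization (subtract the degree-$(m-1)$ polynomial fitting the averages of $\nabla^i u$ over $B(z_0,r)$) differs slightly from the paper's, which instead pins down the polynomial by requiring vanishing at $q$ fixed points $z_0+rh_i$; that choice is not cosmetic, as noted below.

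The gap you flag — justifying that $\partial_y^\gamma \mat E^L_{z_0,r}(\,\cdot\,,y)$ is a weak solution in $x$, so that Caccioppoli may be applied in $x$ after it has been applied in $y$ — is genuine, and your two guesses (smooth-coefficient approximation, or induction via Meyers/GNS) are not what the paper does. The paper's mechanism has three ingredients you did not supply. First, it proves a pointwise bound on the unmollified kernel itself, \eqref{eqn:E:z0:r:bound}, using Lemma~\ref{lem:high:Morrey:bound} and the $L^2$ bound on $\mat E^L_{z_0,r}(x,\,\cdot\,)$; this is the a priori input that seeds the bootstrap. Second, it proves a symmetry relation at the level of the kernel itself, $E^L_{j,k,z_0,r}(x,y)=\overline{E^{L^*}_{k,j,z_0,r}(y,x)}$ (Lemma~\ref{lem:symmetric:lowest}), not just for the top-order derivatives; this is exactly where the point-evaluation normalization earns its keep, since both $\mat E^L_{z_0,r}(z_0+rh_i,\,\cdot\,)=0$ and $\mat E^L_{z_0,r}(\,\cdot\,,z_0+rh_i)=0$, which pins down the polynomial ambiguity simultaneously on the $L$ and $L^*$ sides and lets the symmetry of $\nabla_x^m\nabla_y^m\mat E$ descend to $\mat E$. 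With your average-based normalization you would need a comparable two-sided statement, which is less automatic. Third, rather than trying to integrate by parts against the unregularized kernel, the paper mollifies in both variables, $\mat E^L_{\zeta,\xi,\delta,\varepsilon}=\partial_x^\zeta\partial_y^\xi(\eta_\delta *_x \mat E^L_{z_0,r}*_y\eta_\varepsilon)$, whose derivatives of all orders are honest $L^2$ functions. Because convolution in $x$ commutes with $L^*$ acting in $y$, the function $y\mapsto \partial^\zeta\eta_\delta *_x\mat E^L_{z_0,r}(x,y)$ is still an $L^*$-solution away from $x$ and $B(z_0,r)$, so Caccioppoli in $y$ (Corollary~\ref{cor:Caccioppoli:interior}) applies to the mollified kernel; then the symmetry lemma flips the roles of the variables and Caccioppoli in $x$ finishes, with the pointwise bound \eqref{eqn:E:z0:r:bound} providing the starting estimate. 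The resulting $L^2$ bounds are uniform in $\delta,\varepsilon$, and the mixed weak derivatives are obtained as weak subsequential limits. You should make these three steps — the pointwise kernel bound, the kernel-level symmetry under the point normalization, and the two-variable mollification with commuting convolutions — explicit; without them the alternating Caccioppoli argument does not close.
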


By uniqueness of the Newton potential $\vec\Pi^L\mydot F$ in $\dot W^2_m(\R^\dmn)$, the array of highest-order derivatives ${\nabla_x^m \nabla_y^m E^L_{j,k}(x,y)}$ is unique; however, there are many possible normalizations of the lower-order derivatives ${\nabla_x^{m-q} \nabla_y^{m-s} E^L_{j,k}(x,y)}$. In Section~\ref{sec:fundamental:further} we will discuss some natural normalization conditions. In Section~\ref{sec:fundamental:low} we will extend this theorem to operators of order $2m\leq \dmn$.

We will now prove Theorem~\ref{thm:fundamental:high}.
We begin by constructing a fundamental solution $\mat E^L_{j,k}(x,y)$. For our preliminary argument, we will need $\vec\Pi^L\mydot F(x)$ to be well-defined for any specified~$x$; that is, we will need to assume that $\vec \Pi^L\mydot F$ is always continuous. Recall that by Lemma~\ref{lem:Morrey}, if $\nabla^m\vec\Pi^L\mydot F\in L^2(\R^\dmn)$ and $m>\pdmn/2$ then $\vec\Pi^L\mydot F$ is continuous. It is for this reason that we begin with operators of order $2m>\dmn$.

Recall that even if $\vec\Pi^L\mydot F$ is continuous, it is still defined only up to adding polynomials of order $m-1$. We will fix a normalization of $\vec\Pi^L\mydot F$ as follows. 
Choose some points $h_1$, $h_2$, \dots $h_{q}\in\R^\dmn$ with $\abs{h_i}=1$, where $q$ is the number of multiindices $\gamma$ with $\abs{\gamma} \leq  m-1$. If the $h_i$s are chosen appropriately, then for any numbers~$a_i$, there is a unique polynomial $P(x)=\sum_{\abs{\gamma}\leq m-1} p_\gamma\, x^\gamma$, of order at most $m-1$, such that $P(h_i)=a_i$ for all $1\leq i\leq p$. Furthermore, there is some constant $H$ depending only on our choice of $h_i$ such that the bound $\abs{p_\gamma}\leq H  \sup_i\abs{a_i}$ is valid.

Now, choose some $z_0\in\R^\dmn$ and some $r>0$.
We fix an additive normalization of $\vec \Pi^L=\vec \Pi^L_{z_0,r}$ by requiring $\vec \Pi^L_{z_0,r}\mydot F(z_0+r\,h_i)=0$ for all $1\leq i\leq q$.

Let $x\in\R^\dmn$. Define $\vec S_x\mydot F= \vec\Pi^L_{z_0,r}\mydot F(x)$. Then $\vec S_x$ is a linear operator. We will use the Riesz representation theorem to construct the fundamental solution as the kernel of $\vec S_x$; to do this, we will need to establish boundedness of~$\vec S_x$.

We will use the following lemma with $u(x)=\vec\Pi^L\mydot F(x)=\vec S_x\mydot F$. 

\begin{lem}\label{lem:high:Morrey:bound}
Let $u$ be a function such that $\nabla^m u\in L^2(\R^\dmn)$ and such that $u(z_0+r\,h_i)=0$ for all $1\leq i\leq q$.

Then 
\begin{equation*}\abs{u(x)}\leq C\biggl(\frac{R}{r}\biggr)^{m-1} R^{m-\pdmn/2} \doublebar{\nabla^m u}_{L^2(\R^\dmn)},
\quad \text{where }R=\abs{x-z_0}+r.\end{equation*}
\end{lem}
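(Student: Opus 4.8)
The plan is to compare $u$, on the ball $B=B(z_0,R)$ with $R=\abs{x-z_0}+r$, to a polynomial of degree at most $m-1$ chosen independently of~$x$, and then to propagate the vanishing of $u$ at the points $z_0+r\,h_i$ — which lie at distance~$r$ from~$z_0$ — out to the point~$x$, which may lie at distance up to~$R$. Since $2m>\dmn$, Morrey's inequality (Lemma~\ref{lem:Morrey}) guarantees that $u$ has a continuous representative, so all pointwise values in the statement are meaningful; note also that $x$ and each $z_0+r\,h_i$ lie in~$\overline B$, since $\abs{x-z_0}=R-r<R$ and $\abs{r\,h_i}=r\le R$, and that $R/r\ge 1$.

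First I would apply a Poincar\'e (Bramble--Hilbert) estimate on~$B$: there is a polynomial~$P$ of degree at most~$m-1$ with
\begin{equation*}
\doublebar{\nabla^j(u-P)}_{L^2(B)}\le C\,R^{m-j}\doublebar{\nabla^m u}_{L^2(B)},\qquad 0\le j\le m,
\end{equation*}
the case $j=m$ being trivial since $\nabla^m P=0$. Applying Morrey's inequality (Lemma~\ref{lem:Morrey}) on~$B$ to $v=u-P$ then gives
\begin{equation*}
\doublebar{u-P}_{L^\infty(B)}\le C\sum_{j=0}^m R^{j-\pdmn/2}\doublebar{\nabla^j(u-P)}_{L^2(B)}\le C\,R^{m-\pdmn/2}\doublebar{\nabla^m u}_{L^2(\R^\dmn)}.
\end{equation*}
In particular $\abs{u(x)-P(x)}\le C\,R^{m-\pdmn/2}\doublebar{\nabla^m u}_{L^2(\R^\dmn)}$, and since $u(z_0+r\,h_i)=0$ we also obtain $\abs{P(z_0+r\,h_i)}\le C\,R^{m-\pdmn/2}\doublebar{\nabla^m u}_{L^2(\R^\dmn)}$ for $1\le i\le q$.

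Next I would extract the coefficients of~$P$. Writing $Q(y)=P(z_0+r\,y)=\sum_{\abs{\gamma}\le m-1}q_\gamma\,y^\gamma$, the previous bound reads $\abs{Q(h_i)}\le C\,R^{m-\pdmn/2}\doublebar{\nabla^m u}_{L^2(\R^\dmn)}$ for every~$i$, so the interpolation property of the~$h_i$ recorded above (the bound $\abs{p_\gamma}\le H\sup_i\abs{a_i}$) yields $\abs{q_\gamma}\le C\,R^{m-\pdmn/2}\doublebar{\nabla^m u}_{L^2(\R^\dmn)}$ for all $\gamma$ with $\abs{\gamma}\le m-1$. Since $\abs{(x-z_0)/r}=\abs{x-z_0}/r\le R/r$, evaluating $P(x)=Q((x-z_0)/r)$ and using $R/r\ge 1$ gives
\begin{equation*}
\abs{P(x)}\le\sum_{\abs{\gamma}\le m-1}\abs{q_\gamma}\,(R/r)^{\abs{\gamma}}\le C\,(R/r)^{m-1}R^{m-\pdmn/2}\doublebar{\nabla^m u}_{L^2(\R^\dmn)}.
\end{equation*}
Combining this with the bound on $\abs{u(x)-P(x)}$ via the triangle inequality and absorbing the latter term using $R/r\ge1$ once more yields the stated estimate.

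I expect the only real obstacle to be the last two steps, the analytic inputs being routine: the normalization of~$u$ is imposed only at the fixed scale~$r$, whereas the estimate must hold at the arbitrary scale $R\ge r$, and the factor $(R/r)^{m-1}$ is exactly the unavoidable cost of controlling a polynomial of degree~$m-1$ at distance~$\sim R$ from control at distance~$\sim r$. It is essential that $P$ be chosen independently of~$x$, so that the $q$ conditions $u(z_0+r\,h_i)=0$ genuinely pin down its coefficients before one evaluates at~$x$; one must also keep the powers of~$R$ straight so that Morrey's inequality contributes precisely $R^{m-\pdmn/2}$.
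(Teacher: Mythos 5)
Your proof is correct and follows essentially the same route as the paper: both pick a degree-$(m-1)$ polynomial $P$ matching $u$ on $B(z_0,R)$ (a Bramble--Hilbert/Poincar\'e choice), use Morrey to control $u-P$ in sup norm by $R^{m-\pdmn/2}\doublebar{\nabla^m u}_{L^2}$, transfer the vanishing of $u$ at the $z_0+r\,h_i$ to bounds on the rescaled coefficients $q_\gamma$ via the interpolation constant~$H$, and pay the factor $(R/r)^{m-1}$ to evaluate $P$ at~$x$. The only (cosmetic) difference is that you apply Morrey directly to $u-P$ and only estimate $\abs{P(x)}$, whereas the paper applies Morrey to $u$ on $B(z_0,2R)$ and then has to bound all the terms $R^{2k}\fint\abs{\nabla^k P}^2$; your version is slightly leaner but not substantively different.
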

\begin{proof}
By Lemma~\ref{lem:Morrey}, 
\begin{align*}
\abs{u(x)}
&\leq 
	C\biggl(\sum_{k=0}^m R^{2k}\fint_{B(z_0,2R)} \abs{\nabla^k u}^2\biggr)^{1/2}
.\end{align*}
Let $ P(x)$ be the polynomial of degree at most $m-1$ such that 
\begin{equation*}\fint_{B(z_0,2R)} \partial^\gamma  P(x)\,dx = \fint_{B(z_0,2R)} \partial^\gamma u(x)\,dx\end{equation*} 
for all $\abs{\gamma}\leq m-1$.
Then
\begin{align*}
\abs{u(x)}
&\leq 
	C\biggl(\sum_{k=0}^m R^{2k}\fint_{B(z_0,2R)} \abs{\nabla^k u-\nabla^k P}^2
	+\sum_{k=0}^m R^{2k}\fint_{B(z_0,2R)} \abs{\nabla^k P}^2\biggr)^{1/2}
.\end{align*}
If $k \leq m-1$, then by the Poincar\'e inequality
\begin{equation*}R^{2k}\fint_{B(z_0,2R)} \abs{\nabla^k u-\nabla^k P}^2
\leq
R^{2 m}\fint_{B(z_0,2R)} \abs{\nabla^{ m} u}^2.\end{equation*}
Therefore,
\begin{align*}
\abs{u(x)}
&\leq 
	CR^{m-\pdmn/2}\biggl(\int_{B(z_0,2R)} \abs{\nabla^m u}^2\biggr)^{1/2}
	+
	C\biggl(\sum_{k=0}^m R^{2k}\fint_{B(z_0,2R)} \abs{\nabla^k P}^2\biggr)^{1/2}
.\end{align*}
By Lemma~\ref{lem:Morrey} and the above bounds on $\nabla^k u-\nabla^k P$, if $1\leq i\leq q$ then
\begin{align*}
\abs{ P(z_0+r\,h_i)}
&=\abs{ P(z_0+r\,h_i)-u(z_0+r\,h_i)}
\leq 
	CR^{m-\pdmn/2}\doublebar{\nabla^m u}_{L^2(B(z_0,2R))}
.\end{align*}
Let $ P(x)= Q((x-z_0)/r)$, so that $ Q(h_i)= P(z_0+r\,h_i)$. By construction of $Q$ and $h_i$, we have that
\begin{equation*}
Q(x)=\sum_{\abs{\gamma}\leq m-1} q_\gamma \,x^\gamma
\text{ for some $q_\gamma$ with }
\abs{q_\gamma} \leq CR^{m-\pdmn/2}\doublebar{\nabla^m u}_{L^2(B(z_0,2R))}.\end{equation*}
Then
\begin{equation*}\partial^\delta P(x)=\sum_{\gamma\geq \delta} r^{-\abs{\gamma}}q_\gamma \,(x-z_0)^{\gamma-\delta}
\end{equation*}
and so if $x\in B(z_0,2R)$, then
\begin{equation*}\abs{\nabla^k P}
\leq 
C \biggl(\frac{R}{r}\biggr)^{m-1} R^{-k}\sup_\gamma \abs{q_\gamma}
\leq
C \biggl(\frac{R}{r}\biggr)^{m-1} R^{m-k-\pdmn/2}
\doublebar{\nabla^m u}_{L^2(B(z_0,2R))}
.
\end{equation*}
Combining these estimates, we have that
\begin{align*}
\abs{u(x)}
&\leq 
	C
	\biggl(\frac{R}{r}\biggr)^{m-1}	R^{m-\pdmn/2}\doublebar{\nabla^m u}_{L^2(B(z_0,2R))}
\end{align*}
as desired.\end{proof}

We apply the lemma to the function $u=\vec\Pi^L_{z_0,r}\mydot F$.
Recall that $\nabla^m \vec\Pi^L$ is bounded on $L^2(\R^\dmn)$, and so
\begin{equation*}\abs{\vec S_x\mydot F}
\leq CR^{m-\pdmn/2} \biggl(\frac{R}{r}\biggr)^{m-1} \doublebar{\mydot F}_{L^2({\R^\dmn})},
\quad R=\abs{x-z_0}+r.
\end{equation*}

By the Riesz representation theorem, there is some array $\mat E^L$ such that
\begin{equation*}(\vec\Pi^L_{z_0,r}\mydot F)_j(x)
=(\vec S_x\mydot F)_j
=\sum_{k=1}^N\sum_{\abs{\beta}=m} \int_{\R^\dmn} E^L_{j,k,\beta,z_0,r}(x,y)\,F_{k,\beta}(y)\,dy.\end{equation*}
Furthermore, $\mat E^L$ satisfies the bound
\begin{equation}
\label{eqn:E:high:bound}
\doublebar{E^L_{j,k,\beta,z_0,r}(x,\,\cdot\,)}_{L^2({\R^\dmn})} \leq  C(p) R^{m-\pdmn/2} \biggl(\frac{R}{r}\biggr)^{m-1} 
,\quad R=r+\abs{x-z_0}
.\end{equation}
As in the proof of Lemma~\ref{lem:newton:adjoint}, we may use Lemma~\ref{lem:curlfree} to see that there is some function $E^L_{j,k,z_0,r}$ such that $E^L_{j,k,\beta,z_0,r}(x,y)=\partial_y^\beta E^L_{j,k,z_0,r}(x,y)$.
Again $E^L_{j,k,z_0,r}(x,y)$ is not unique; we may fix a normalization by requiring that
\begin{equation*}\mat E^L_{z_0,r}(x,z_0+r\,h_i)=0\quad\text{for all }x\in\R^\dmn\text{ and all } 1\leq i\leq q.
\end{equation*}
Notice that by construction of $\mat E^L_{z_0,r}$, 
\begin{equation*}
\partial_y^\beta\mat E^L_{z_0,r}(z_0+r\,h_i,y)=0\quad\text{for all } 1\leq i\leq p
\end{equation*}
as an $L^2(\R^\dmn)$-function; thus $P(y)=\mat E^L_{z_0,r}(z_0+r\,h_i,y)$ is a polynomial in $y$ of order $m-1$, and because it is equal to zero at the points $y=z_0+r\,h_i$ we have that 
\begin{equation*}
\mat E^L_{z_0,r}(z_0+r\,h_i,y)=0\quad\text{for all } y\in\R^\dmn \text{ and all } 1\leq i\leq p.
\end{equation*}

We also observe that by Lemma~\ref{lem:high:Morrey:bound} and the bound \eqref{eqn:E:high:bound}, we have that
\begin{equation}
\label{eqn:E:z0:r:bound}
\abs{\mat E^L_{z_0,r}(x,y)}
\leq
	C r^{2m-\pdmn}
	\biggl(1+\frac{\abs{y-z_0}}{r}\biggr)^{2m-\pdmn/2-1} \biggl(1+\frac{\abs{x-z_0}}{r}\biggr)^{2m-\pdmn/2-1}
.\end{equation}

We have established the existence of $\mat E^L$ and the relation \eqref{eqn:fundamental}.
To complete the proof of Theorem~\ref{thm:fundamental:high}, we must show that the derivatives $\partial_x^\zeta\partial_y^\xi\mat E^L_{j,k,z_0,r}(x,y)$ exist in the weak sense and satisfy the bounds \eqref{eqn:fundamental:bound} and~\eqref{eqn:fundamental:bound:m-1}, and must establish the symmetry property~\eqref{eqn:fundamental:symmetric}.


Let $\eta$ be a smooth cutoff function, that is, $\int_{\R^\dmn}\eta=1$, $\eta\geq 0$ and $\eta\equiv 0$ outside of the unit ball $B(0,1)$. Let $\eta_\varepsilon(x)=\varepsilon^{-\pdmn}\eta(x/\varepsilon)$. We will let $*_x$ denote convolution in the $x$ variable, that is,
\begin{equation*}\eta_\varepsilon*_x E^L_{j,k,z_0,r}(x,y)=\int_{\R^\dmn} \eta_{\varepsilon}(\tilde x) \, E^L_{j,k,z_0,r}(x-\tilde x,y)\, d\tilde x.\end{equation*}
For the sake of symmetry we will consider the function $\eta_\delta*_x E^L_{j,k,z_0,r}(x,y)*_y \eta_\varepsilon$ for some $\varepsilon$, $\delta>0$.

For any multiindices $\zeta$ and~$\xi$, let
\begin{equation*}E^L_{j,k,\zeta,\xi,\delta,\varepsilon}(x,y)=\partial_x^\zeta \partial_y^\xi (\eta_\delta*_x E^L_{j,k,z_0,r}(x,y)*_y \eta_\varepsilon).\end{equation*}
We will then construct $\partial_x^\zeta \partial_y^\xi E^L_{j,k}(x,y)$ as the weak limit of $E^L_{j,k,\zeta,\xi,\delta,\varepsilon}(x,y)$ as $\varepsilon\to 0$, $\delta\to 0$.

We begin with the derivatives of highest order. Let $\abs{\alpha}=\abs{\beta}=m$.
Observe that 
\begin{equation*}
E^L_{j,k,\alpha,\beta,\delta,\varepsilon}(x,y)
=
(\partial^\alpha\eta_\delta)*_x E^L_{j,k,\beta,z_0,r}(x,y)*_y \eta_\varepsilon.
\end{equation*}
Now, we have that
\begin{align*}
\int_{\R^d} E^L_{j,k,\alpha,\beta,\delta,\varepsilon}(x,y)\, F(y)\,dy
&= 
	(\partial^\alpha\eta_\delta)*_x\int_{\R^d} E^L_{j,k,\beta,z_0,r}(x,y) \, (\eta_\varepsilon*F)(y)\,dy
\\&=
	\eta_\delta * \partial^\alpha \Pi^L_j (\eta_\varepsilon*F\,\mydot e_{k,\beta})(x)
.\end{align*}
The operator $F\mapsto \eta_\delta * \partial^\alpha \Pi^L_j (\eta_\varepsilon*F\,\mydot e_{k,\beta})(x)$ is bounded $L^2(\R^\dmn)\mapsto\C$, albeit with a bound depending on $\delta$. Thus by the Riesz representation theorem, $K(y)=E^L_{j,k,\alpha,\beta,\delta,\varepsilon}(x,y)$ is the kernel of this operator, and so does not depend on $z_0$ and~$r$. Furthermore, by Lemma~\ref{lem:newton:adjoint},
\begin{equation*}E^L_{j,k,\alpha,\beta,\delta,\varepsilon}(x,y)
=\overline{E^{L^*}_{k,j,\beta,\alpha,\varepsilon,\delta}(y,x)}.
\end{equation*}
In order to establish the bounds \eqref{eqn:fundamental:bound} and \eqref{eqn:fundamental:bound:m-1}, we would like to use the Caccioppoli inequality in both $x$ and~$y$; it will be helpful to have a similar symmetry relation for $\mat E^L_{z_0,r}(x,y)$ as well as its highest derivatives.


\begin{lem}
\label{lem:symmetric:lowest}
We have that
$E^L_{j,k,z_0,r}(x,y)=\overline{E^{L^*}_{k,j,z_0,r}(y,x)}.$
\end{lem}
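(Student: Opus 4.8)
The plan is to show that the difference
$D(x,y):=E^L_{j,k,z_0,r}(x,y)-\overline{E^{L^*}_{k,j,z_0,r}(y,x)}$
vanishes identically. The strategy has three ingredients: (i) $D$ is jointly continuous and vanishes whenever either argument equals one of the interpolation points $z_0+r\,h_i$; (ii) $\partial_x^\alpha\partial_y^\beta D=0$ in the sense of distributions for all $\abs\alpha=\abs\beta=m$, which I will deduce from the already-established symmetry of the highest-order mollified derivatives; and (iii) the unisolvence of $\{h_i\}$ for polynomials of degree at most $m-1$ then forces $D\equiv 0$.

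First I would establish that $\mat E^L_{z_0,r}$ and $\mat E^{L^*}_{z_0,r}$ are jointly continuous on $\R^\dmn\times\R^\dmn$. For fixed $x$, the function $y\mapsto E^L_{j,k,z_0,r}(x,y)$ lies in $\dot W^2_m(\R^\dmn)$ by the bound \eqref{eqn:E:high:bound} and the normalization at the points $z_0+r\,h_i$, so since $2m>\dmn$ it is continuous in $y$ by Lemma~\ref{lem:Morrey}, with a modulus of continuity that is locally uniform in $x$. On the other hand, since $x\mapsto\vec\Pi^L_{z_0,r}\mydot F(x)$ is continuous for every $\mydot F\in L^2(\R^\dmn)$ and the $\dot W^2_m$-norms in $y$ are locally bounded in $x$ by \eqref{eqn:E:high:bound}, the map $x\mapsto E^L_{j,k,z_0,r}(x,\cdot)$ is weakly continuous into $L^2_{loc}$; a standard Arzel\`a--Ascoli argument then upgrades this to joint continuity, and the same applies to $E^{L^*}_{z_0,r}$. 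In particular $D$ is jointly continuous, and combining the imposed normalization $\mat E^L_{z_0,r}(x,z_0+r\,h_i)=0$ with the identity $\mat E^L_{z_0,r}(z_0+r\,h_i,y)=0$ derived above (and their analogues for $L^*$) yields $D(z_0+r\,h_i,y)=0$ and $D(x,z_0+r\,h_i)=0$ for all $x,y\in\R^\dmn$ and all $1\le i\le q$.

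Next, let $\Phi_{\delta,\varepsilon}$ and $\Psi_{\delta,\varepsilon}$ denote the smooth functions obtained by mollifying $E^L_{j,k,z_0,r}(x,y)$ and $\overline{E^{L^*}_{k,j,z_0,r}(y,x)}$ in both variables exactly as in the definition of $E^L_{j,k,\zeta,\xi,\delta,\varepsilon}$. Matching up the mollification parameters, the established identity $E^L_{j,k,\alpha,\beta,\delta,\varepsilon}(x,y)=\overline{E^{L^*}_{k,j,\beta,\alpha,\varepsilon,\delta}(y,x)}$ says precisely that $\partial_x^\alpha\partial_y^\beta(\Phi_{\delta,\varepsilon}-\Psi_{\delta,\varepsilon})=0$ for all $\abs\alpha=\abs\beta=m$. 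Since $E^L_{z_0,r}$ and $E^{L^*}_{z_0,r}$ are continuous and locally bounded by \eqref{eqn:E:z0:r:bound}, their mollifications converge locally uniformly, so $\Phi_{\delta,\varepsilon}-\Psi_{\delta,\varepsilon}\to D$ locally uniformly as $\delta,\varepsilon\to 0$; testing against $\partial_x^\alpha\partial_y^\beta\phi$ and integrating by parts in the smooth approximants then gives $\partial_x^\alpha\partial_y^\beta D=0$ in the sense of distributions for all $\abs\alpha=\abs\beta=m$.

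Finally I would convert this into $D\equiv 0$. Mollify $D$ in the $y$-variable alone: $D_\varepsilon:=D*_y\eta_\varepsilon$ is jointly continuous, $\partial_y^\beta D_\varepsilon$ is continuous, and $\partial_x^\alpha\partial_y^\beta D_\varepsilon=0$ distributionally. Slicing in $x$, for each fixed $y$ the continuous function $x\mapsto\partial_y^\beta D_\varepsilon(x,y)$ has all its $m$th-order $x$-derivatives equal to zero, hence is a polynomial in $x$ of degree at most $m-1$; by the unisolvence of $\{h_i\}$ its coefficients are fixed linear combinations of its values at the points $z_0+r\,h_i$, and those values vanish since $D(z_0+r\,h_i,\cdot)\equiv 0$ gives $\partial_y^\beta D_\varepsilon(z_0+r\,h_i,\cdot)=\partial_y^\beta\bigl(D(z_0+r\,h_i,\cdot)*\eta_\varepsilon\bigr)\equiv 0$. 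Thus $\partial_y^\beta D_\varepsilon\equiv 0$ for every $\abs\beta=m$, and letting $\varepsilon\to 0$ yields $\partial_y^\beta D=0$ for all $\abs\beta=m$; then for each fixed $x$ the continuous function $y\mapsto D(x,y)$ is a polynomial of degree at most $m-1$ whose coefficients are linear combinations of the values $D(x,z_0+r\,h_i)=0$, so $D\equiv 0$, which is the claim. I expect the main obstacle to be ingredient (i): the construction only hands us regularity of $E^L_{z_0,r}$ in its second argument and of $E^{L^*}_{z_0,r}$ in its second argument, so joint continuity must be extracted by the weak-continuity plus equicontinuity argument above, and one must be slightly careful that the weak limits in step (ii) are taken in a way that justifies the integration by parts; once continuity is in hand the rest of the argument is soft.
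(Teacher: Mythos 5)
Your proof is correct and shares the paper's core ingredients -- the mollified symmetry identity $E^L_{j,k,\alpha,\beta,\delta,\varepsilon}(x,y)=\overline{E^{L^*}_{k,j,\beta,\alpha,\varepsilon,\delta}(y,x)}$, the vanishing of the normalized kernels at the interpolation points $z_0+r\,h_i$ in both slots, the unisolvence of $\{h_i\}$ for polynomials of degree $\le m-1$, and a passage to the limit in the mollification parameters. The difference is purely one of ordering: the paper keeps both mollifiers in place, argues at the mollified level that $E^L_{j,k,0,\beta,\delta,\varepsilon}(x,y)$ and $\overline{E^{L^*}_{k,j,\beta,0,\varepsilon,\delta}(y,x)}$ differ by a polynomial in $x$, kills that polynomial using the interpolation points, repeats in $y$, and only then sends $\delta,\varepsilon\to 0$; you instead pass to the distributional identity $\partial_x^\alpha\partial_y^\beta D=0$ first, re-mollify in $y$ alone, and then do the interpolation argument in $x$ and then in $y$. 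Your ordering is actually slightly cleaner on one technical point: the paper asserts $E^L_{j,k,0,\beta,\delta,\varepsilon}(z_0+r\,h_i,y)=0$, but that quantity is an $x$-mollification of $\mat E^L_{z_0,r}$, and convolving in $x$ does not preserve vanishing at a single $x$-value, so that display is literally off (it becomes true only in the limit $\delta\to 0$). Your version, which mollifies only in $y$ before applying unisolvence in $x$, sidesteps this. The one place you are a bit loose is the joint-continuity claim in step (i): weak $L^2_{loc}$-continuity in $x$ plus Morrey equicontinuity in $y$ does not hand you joint continuity by a bare Arzel\`a--Ascoli invocation -- you also need to identify the uniform-on-compacts subsequential limits of $E^L_{j,k,z_0,r}(x_n,\cdot)$ with $E^L_{j,k,z_0,r}(x,\cdot)$, which you can do using the normalization at $z_0+r\,h_i$ to pin down the polynomial ambiguity. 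With that detail supplied, the argument is sound, and arguably a more careful rendering of what the paper intends.
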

\begin{proof}
Because $E^L_{j,k,\alpha,\beta,\delta,\varepsilon}(x,y)
=\overline{E^{L^*}_{k,j,\beta,\alpha,\varepsilon,\delta}(y,x)}$, we have that
\begin{equation*}\nabla_x^m E^L_{j,k,0,\beta,\delta,\varepsilon}(x,y)
=\nabla_x^m \overline{E^{L^*}_{k,j,\beta,0,\varepsilon,\delta}(y,x)}.\end{equation*}
Thus $E^L_{j,k,0,\beta,\delta,\varepsilon}(x,y)$ and $\overline{E^{L^*}_{k,j,\beta,0,\varepsilon,\delta}(y,x)}$ differ by a polynomial in~$x$ of order $m-1$. But observe that
\begin{equation*}E^L_{j,k,0,\beta,\delta,\varepsilon}(z_0+r\,h_i,y)
=0 = \overline{E^{L^*}_{k,j,\beta,0,\varepsilon,\delta}(y,z_0+r\,h_i)}\end{equation*}
for all $1\leq i\leq q$; by construction of the points~$h_i$, this implies that 
\begin{equation*}E^L_{j,k,0,\beta,\delta,\varepsilon}(x,y)
=\overline{E^{L^*}_{k,j,\beta,0,\varepsilon,\delta}(y,x)}.\end{equation*}
By a similar argument, 
\begin{equation*}E^L_{j,k,0,0,\delta,\varepsilon}(x,y)
=\overline{E^{L^*}_{k,j,0,0,\varepsilon,\delta}(y,x)}.\end{equation*}
By Morrey's inequality $\mat E^L$ is continuous. Taking the limits as $\varepsilon\to 0$ and $\delta \to 0$ completes the proof.
\end{proof}

We now wish to establish an $L^2$ bound on $E^L_{j,k,\zeta,\xi,\delta,\varepsilon}$, independent of $\delta$ and~$\varepsilon$; this will allow us to prove the bounds \eqref{eqn:fundamental:bound} and \eqref{eqn:fundamental:bound:m-1}, and also to construct the derivatives by taking the limits as $\delta$, $\varepsilon\to 0$.
We will use the Caccioppoli inequality.

The first step is to show that $\mat E^L_{z_0,r}$ is a solution in some sense. Recall that if $\vec\varphi\in \dot W^2_m(\R^\dmn)$, then by formula~\eqref{eqn:newton:identity} $\varphi_j(x)=\Pi^L_j (\mat A\nabla^m\vec \varphi)(x)$, and so by our construction of $\mat E^L$,
\begin{equation*}\varphi_j(x)=
\sum_{k=1}^N\sum_{\abs{\beta}=m} \int_{\R^\dmn} \partial_y^\beta E^L_{j,k,z_0,r}(x,y)\,
\sum_{\ell=1}^N \sum_{\abs{\gamma}=m} A^{k\ell}_{\beta\gamma} \partial^\gamma \varphi_\ell(y)\,dy
\end{equation*}
as $\dot W^2_m$ functions; if $\vec\varphi(z_0+r\,h_i)=0$ for all $1\leq i\leq q$, then this equation is true pointwise for all~$x$. Thus, we have that for any $x$, $j$, $z_0$, $r$, the function $\vec v(y)$ given by 
$v_k(y)=E^L_{j,k,z_0,r}(x,y)$
is a solution to $L^*\vec v =0$ in $\R^d\setminus\{x\}\setminus \overline{B(z_0,r)}$.

Fix some $x_0$, $y_0$. We wish to bound $E^L_{j,k,\zeta,\xi,\delta,\varepsilon}$. Choose $z_0$ and $r$ so that $\abs{x_0-y_0}=\abs{x_0-z_0}=\abs{y_0-z_0}=8r$.

For any $x\in B(x_0,r)$, we have by Corollary~\ref{cor:Caccioppoli:interior}, if $\varepsilon$ is small compared to~$r$ then
\begin{align*}
\int_{B(y_0,r)} \abs{\mat E^L_{\zeta,\xi,\delta,\varepsilon}(x,y)}^2\,dy
&=
	\int_{B(y_0,r)} \abs{\eta_\varepsilon*_y(\partial_y^\xi (\partial^\zeta\eta_\varepsilon*_x\mat E^L_{z_0,r}(x,y))}^2\,dy
\\&\leq
	\int_{B(y_0,2r)} \abs{(\partial_y^\xi (\partial^\zeta\eta_\varepsilon*_x\mat E^L_{z_0,r}(x,y))}^2\,dy
\\&\leq
	\frac{C}{r^{2\abs{\xi}}}
	\int_{B(y_0,4r)} \abs{(\partial^\zeta\eta_\varepsilon*_x\mat E^L_{z_0,r}(x,y))}^2\,dy.
\end{align*}
Again by Corollary~\ref{cor:Caccioppoli:interior} and by the bound~\eqref{eqn:E:z0:r:bound},
\begin{align*}\int_{B(x_0,r)} \abs{(\partial^\zeta\eta_\varepsilon*_x\mat E^L_{z_0,r}(x,y))}^2\,dx
&=
	\int_{B(x_0,r)} \abs{(\eta_\varepsilon*_x \partial^\zeta_x E^{L^*}_{z_0,r}(y,x))}^2\,dx
\\&\leq
	\int_{B(x_0,2r)} \abs{\partial^\zeta_x \mat E^{L^*}_{z_0,r}(y,x)}^2\,dx
	\leq C r^{4m-\pdmn-2\abs{\zeta}}
.\end{align*}
Thus
\begin{multline*}
\int_{B(x_0,r)}\int_{B(y_0,r)} \abs{\mat E^L_{\zeta,\xi,\delta,\varepsilon}(x,y)}^2\,dy\,dx
\\\leq
	\frac{C}{r^{2\abs{\xi}}}
	\int_{B(y_0,4r)} \int_{B(x_0,r)}
	\abs{(\partial^\zeta\eta_\varepsilon*_x\mat E^L_{z_0,r}(x,y))}^2\,dx\,dy
	\leq C r^{4m-2\abs{\zeta}-2\abs{\xi}}.
\end{multline*}
So $\mat E^L_{\zeta,\xi,\delta,\varepsilon}$ is in $L^2(B(x_0,r)\times B(y_0,r))$, uniformly in $\delta$, $\varepsilon$; thus there is a weakly convergent subsequence as $\delta$, $\varepsilon\to 0$. Observe that the weak limit must be the partial derivative $\partial_x^\zeta \partial_y^\xi \mat E^L_{z_0,r}(x,y)$, as desired.

\subsection{Natural normalization conditions for the fundamental solution}
\label{sec:fundamental:further}


Recall that our normalization of $\mat E^L$, in the construction given in Section~\ref{sec:fundamental:high}, is highly artificial and depends on our choice of the normalization points $z_0+r\,h_i$.
In this section we will construct a somewhat more natural normalization of at least the higher derivatives of $\mat E^L$.

Our normalization will, loosely speaking, be a requirement that the higher-order derivatives of $\mat E^L$ decay at infinity.
Thus, we begin with a decay result.
\begin{lem}\label{lem:fundamental:far:m}
Let $A(x_0,R)$ denote the annulus $B(x_0,2R)\setminus B(x_0,R)$.
Let $p^+=\min(p^+_L,p^+_{L^*})$, where $p^+_L$ is as in Theorem~\ref{thm:Meyers}. If $0<\varepsilon<\pdmn(1-2/p^+)$, then there is some constant $C=C(\varepsilon)$ such that if $x_0\in\R^\dmn$ and $R>4r>0$, then
\begin{align*}
\int_{A(x_0,R)}\int_{B(x_0,r)} \abs{\nabla^m_x \nabla^m_y \mat E^L(x,y)}^2\,dy \,dx
&\leq C(\varepsilon) \biggl(\frac{r}{R}\biggr)^\varepsilon
.\end{align*}
\end{lem}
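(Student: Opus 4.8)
The plan is to exploit the fact, established in Section~\ref{sec:fundamental:high}, that for fixed $x$ the function $\vec v(y)$ with $v_k(y)=E^L_{j,k,z_0,r}(x,y)$ solves $L^*\vec v=0$ away from $x$ and $\overline{B(z_0,r)}$, together with the dual statement in the $x$ variable, and to run a Caccioppoli/Meyers reverse-Hölder argument in both variables simultaneously. Roughly: freeze $x\in B(x_0,r)$ and think of $\mat E^L(x,\,\cdot\,)$ as a solution of $L^*$ in the annular region around $x_0$; then freeze $y$ in the far annulus and use that $\mat E^L(\,\cdot\,,y)$ solves $L$ near $x_0$. The gain of a small power $(r/R)^\varepsilon$ will come from Meyers's exponent $p^+$: a reverse-Hölder inequality upgrades an $L^2$ average over a ball into an $L^{p^+}$ average, and Hölder's inequality run the other way across many disjoint balls of radius comparable to $r$ tiling the annulus $A(x_0,R)$ produces a factor $(r/R)^{\dmn(1-2/p^+)}$; choosing $\varepsilon<\dmn(1-2/p^+)$ accommodates the loss.

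Concretely I would proceed as follows. First, fix $x\in B(x_0,r)$ and normalize $z_0,r$ so that the construction of Section~\ref{sec:fundamental:high} applies with $x$ playing the role of the singularity; by \eqref{eqn:fundamental:bound} (applied at the appropriate scale) and a dyadic decomposition of the region between $B(x_0,r)$ and $A(x_0,R)$, together with the interior Caccioppoli inequality (Corollary~\ref{cor:Caccioppoli:interior}) applied to $\vec v(y)=\mat E^L(x,\,\cdot\,)$ away from $x$, I would obtain a crude bound $\int_{A(x_0,R)}\abs{\nabla^m_y\mat E^L(x,y)}^2\,dy\le C r^{?}$ with the correct homogeneity, and then improve it: since $\nabla^m_y\mat E^L(x,\,\cdot\,)$ satisfies Meyers's reverse-Hölder estimate \eqref{eqn:Meyers} (with $\mydot F=0$, $\delta=0$) on balls $B(y',cr)\subset A(x_0,R)$, one has $\bigl(\fint_{B(y',cr)}\abs{\nabla^m_y\mat E^L(x,y)}^{p^+}\bigr)^{1/p^+}\le C\bigl(\fint_{B(y',2cr)}\abs{\nabla^m_y\mat E^L(x,y)}^2\bigr)^{1/2}$. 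Summing the $L^{p^+}$ bound over $\sim (R/r)^\dmn$ disjoint balls covering $A(x_0,R)$ and applying Hölder turns the full-annulus $L^2$ mass into something controlled by the full-annulus $L^{p^+}$ mass times $(R^\dmn)^{1-2/p^+}$; combined with the decay already known from the crude estimate and the scaling $r^{2m-\dmn}$-type bounds of \eqref{eqn:E:z0:r:bound}, this yields the decay $(r/R)^{\dmn(1-2/p^+)}$, hence $(r/R)^\varepsilon$.

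The second half is the $x$-variable step: having controlled $\int_{A(x_0,R)}\int_{B(x_0,r)}\abs{\nabla^m_x\nabla^m_y\mat E^L}^2$ by first applying Caccioppoli in $x$ (Corollary~\ref{cor:Caccioppoli:interior} applied to $\mat E^{L^*}(y,\,\cdot\,)$, legitimate because $\mat E^L(\,\cdot\,,y)$ with $y$ in the far annulus solves $L$ near $x_0$, using the symmetry \eqref{eqn:fundamental:symmetric} and Lemma~\ref{lem:symmetric:lowest}), we reduce to bounding $\frac{C}{r^2}\int_{A(x_0,R)}\int_{B(x_0,2r)}\abs{\nabla^{m-1}_x\nabla^m_y\mat E^L}^2$ and then, iterating down with the Poincaré inequality (there is no issue of renormalizing since we only lower the $x$-derivative order and these balls sit away from the singularity for $y$ in the far annulus), to $\frac{C}{r^{2m}}\int_{A(x_0,R)}\int_{B(x_0,2r)}\abs{\mat E^L}^2$, which by \eqref{eqn:E:z0:r:bound} and the $y$-decay obtained in the first step gives the claim. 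The symmetric roles of $x$ and $y$ mean the same reverse-Hölder gain can be taken in whichever variable is more convenient, and $p^+=\min(p^+_L,p^+_{L^*})$ is exactly what is needed so the argument is available for both $L$ and $L^*$. The main obstacle I expect is bookkeeping the normalization constants: because the lower-order derivatives of $\mat E^L_{z_0,r}$ depend on the artificial base point $z_0+r\,h_i$, I must be careful to express every estimate in terms of the highest-order derivatives (which are canonical) or of $\mat E^L_{z_0,r}$ itself with the explicit bound \eqref{eqn:E:z0:r:bound}, and to make sure the dyadic annular decomposition between scale $r$ and scale $R$ is compatible with the regions where the solution property holds; the actual reverse-Hölder covering argument is routine once that is set up.
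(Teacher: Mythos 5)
Your overall plan (use the fact that $\mat E^L(x,\cdot)$ is an $L^*$-solution away from the pole, and run a Meyers-type self-improvement to gain $(r/R)^\varepsilon$) is the right one, but the core estimate applies the two H\"older inequalities at the wrong scales, and as written it produces a loss rather than a gain. You propose to apply the reverse H\"older inequality \eqref{eqn:Meyers} on \emph{small} balls $B(y',cr)\subset A(x_0,R)$ to upgrade $L^2$ to $L^{p^+}$, and then to apply H\"older on the whole annulus to return to $L^2$. Running this out: reverse H\"older on each $B(y',cr)$ gives $\bigl(\int_{B(y',cr)}\abs{f}^{p^+}\bigr)^{2/p^+}\le C(cr)^{-\dmn(1-2/p^+)}\int_{B(y',2cr)}\abs{f}^2$; summing over a bounded-overlap cover of the annulus (using $\sum_i a_i^{p^+/2}\le(\sum_i a_i)^{p^+/2}$ for $a_i\ge 0$) gives $\bigl(\int_{A(x_0,R)}\abs{f}^{p^+}\bigr)^{2/p^+}\le C(cr)^{-\dmn(1-2/p^+)}\int_{\widetilde A}\abs{f}^2$ over a slightly enlarged annulus $\widetilde A$; and H\"older on $A(x_0,R)$ then gives
\begin{equation*}
\int_{A(x_0,R)}\abs{f}^2\le CR^{\dmn(1-2/p^+)}\biggl(\int_{A(x_0,R)}\abs{f}^{p^+}\biggr)^{2/p^+}\le C\biggl(\frac{R}{r}\biggr)^{\dmn(1-2/p^+)}\int_{\widetilde A}\abs{f}^2,
\end{equation*}
which is the reciprocal of what you need. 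No reshuffling fixes this: reverse H\"older on a ball of radius $\sim r$ only compares quantities living at the same scale $r$, and so cannot by itself produce a gain that compares scales $r$ and $R$.

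The paper's proof applies the two inequalities in the opposite order, and this is the whole point. Fix $x$ in the far annulus and set $v^\delta_k(y)=\eta_\delta*_x\partial_x^\alpha E^L_{j,k}(x,y)$; then $L^*\vec v^\delta=0$ on $\R^\dmn\setminus B(x,\delta)$, which contains the \emph{large} ball $B(x_0,R/2)$. Apply plain H\"older on the small ball $B(x_0,r)$ (gain of $r^{\dmn(1-2/p)}$ in passing from $L^2$ to $L^p$ there), enlarge the resulting $L^p$ integral from $B(x_0,r)$ to $B(x_0,R/4)$, and then apply the Meyers reverse H\"older inequality of Theorem~\ref{thm:Meyers} at scale $R$ on $B(x_0,R/4)\subset B(x_0,R/2)$, which costs a factor $R^{-\dmn(1-2/p)}$. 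The mismatch of scales --- plain H\"older at $r$, reverse H\"older at $R$ --- is precisely what produces $(r/R)^{\dmn(1-2/p)}=(r/R)^\varepsilon$. Integrating over $x\in A(x_0,R)$ and bounding the remaining double integral over $A(x_0,R)\times B(x_0,R/2)$ by a constant, directly from \eqref{eqn:fundamental:bound} applied to finitely many ball pairs at scale $R$, finishes the proof.

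Your proposed second step --- iterating Caccioppoli/Poincar\'e in the $x$-variable down to $\abs{\mat E^L}^2$ and then invoking \eqref{eqn:E:z0:r:bound} --- is also not needed and is hard to close: the bound \eqref{eqn:E:z0:r:bound} grows polynomially in $\abs{x-z_0}/r$ and $\abs{y-z_0}/r$, so the resulting constant is not uniform as $R\to\infty$, and you would additionally have to untangle the artificial $z_0,r$-dependent normalization of the lower-order derivatives. The paper sidesteps both problems by never going below the top-order object $\nabla^m_x\nabla^m_y\mat E^L$, which is normalization-independent and already controlled by \eqref{eqn:fundamental:bound}.
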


\begin{proof}
Let $\eta_\delta$ be a smooth approximate identity, as in Section~\ref{sec:fundamental:high}; we will establish a bound on $\eta_\delta *_x \nabla_x^m\nabla_y^m \mat E^L(x,y)$, uniform in $\delta$, and then let $\delta\to 0$.

Fix some $\delta>0$, $x\in\R^\dmn$, and some $j$ and $\alpha$ with $1\leq j\leq N$ and $\abs{\alpha}=m$. Let
\begin{equation*}v_k^\delta(y)=\eta_\delta *_x \partial_x^\alpha E^L_{j,k}(x,y).\end{equation*}
As in Section~\ref{sec:fundamental:high}, we begin by showing  that $\vec v^\delta$ is a solution to an elliptic equation. By the bound \eqref{eqn:fundamental:bound}, we have that 
$\vec v^\delta\in \dot W^2_{m,loc}$.
Suppose that $\vec\varphi$ is smooth and compactly supported. If $\dist(x,\supp \vec\varphi)>\delta$, then by formula~\eqref{eqn:newton:identity} and formula~\eqref{eqn:fundamental}, 
\begin{equation*}0=\eta_\delta *\partial^\alpha\varphi_j(x)=
\sum_{k=1}^N \sum_{\ell=1}^N \sum_{\abs{\beta}=\abs{\gamma}=m} \int_{\R^\dmn} 
\eta_\delta *_x\partial_x^\alpha \partial_y^\beta E^L_{j,k}(x,y)\,A^{\beta\gamma}_{k\ell}(y) \partial^\gamma \varphi_\ell(y)\,dy
.\end{equation*}
So $L^*\vec v^\delta=0$ in $\R^\dmn\setminus B(x,\delta)$, and so Theorem~\ref{thm:Meyers} applies.

Let $p$ be such that $\varepsilon=\pdmn(1-2/p)$; notice that $2<p<p^+$. 
By H\"older's inequality, we have that
\begin{equation*}
\int_{B(x_0,r)}\abs{\nabla_y^m \vec v^\delta(y)}^2\,dy
\leq
	Cr^\varepsilon
	\biggl(\int_{B(x_0,r)}\abs{\nabla_y^m \vec v^\delta(y)}^p\,dy\biggr)^{2/p}
.\end{equation*}
Because $R>4r$, we may replace the second integral by an integral over the ball~$B(x_0,R/4)$. We then apply Theorem~\ref{thm:Meyers}.
This yields the bound
\begin{equation*}
\int_{B(x_0,r)}\abs{\nabla_y^m \vec v^\delta(y)}^2\,dy
\leq
	C
	\frac{r^\varepsilon}{R^\varepsilon}
	\int_{B(x_0,R/2)}\abs{\nabla_y^m \vec v^\delta(y)}^{2}\,dy
\end{equation*}
uniformly in~$\delta$. Taking the limit as $\delta\to 0$ and applying the bound~\eqref{eqn:fundamental:bound}, we see that
\begin{multline*}
\int_{A(x_0,R)} \int_{B(x_0,r)}\abs{\nabla_x^m \nabla_y^m \mat E^L(x,y)}^2\,dy\,dx
\\\begin{aligned}
&\leq	
	\frac{Cr^\varepsilon}{R^\varepsilon}
	\int_{A(x_0,R)} 
	\int_{B(x_0,R/2)}\abs{\nabla_x^m \nabla_y^m \mat E^L(x,y)}^{2}\,dy\,dx
\leq	
	\frac{Cr^\varepsilon}{R^\varepsilon}
\end{aligned}\end{multline*}
as desired.
\end{proof}
Because $L^*$ is also elliptic, a similar bound is valid for $\mat E^{L^*}$.
Notice that by formula~\eqref{eqn:fundamental:symmetric}, we have that $\nabla_x^m\nabla_y^m E^L_{j,k}(x,y)=\overline{\nabla_x^m\nabla_y^m E^{L^*}_{k,j}(y,x)}$. Thus, a similar bound on $\mat E^L$ is valid with the roles of $x$ and~$y$ reversed.

Next, we use this bound to produce natural normalizations of certain higher-order derivatives.

\begin{lem}\label{lem:fundamental:far:low:m}
Suppose that $E$ is a function such that, for some $v\geq 0$, $c>0$, $\varepsilon>0$ and $t<\dmn+\varepsilon$, the decay estimate
\begin{align*}
\int_{y\in B(x_0,r)} \int_{x\in A(x_0,R)}\abs{\nabla^m_x \nabla^{v}_y  E(x,y)}^2 \,dx \,dy
&\leq c R^{t}\biggl(\frac{r}{R}\biggr)^\varepsilon
\end{align*}
is true for all $x_0\in\R^\dmn$ and all $R>4r>0$.

Then there is an array of functions $p_{\gamma}$ such that, if 
\begin{equation*} \widetilde E(x,y)=  E(x,y)+\sum_{m+t/2-\pdmn/2-\varepsilon/2<\abs{\gamma}\leq m-1} p_{\gamma}(y)\,x^\gamma
\end{equation*}
then there is a constant $C=C(\varepsilon)$ depending only on $\varepsilon$ such that, for all integers $q$ with 
$0\leq q\leq m$ and $q< \pdmn/2+\varepsilon/2-t/2$, we have that 
\begin{equation}
\label{eqn:fundamental:far:low:m}
\int_{y\in B(x_0,r)}\int_{x\in A(x_0,R)} \abs{\nabla^{m-q}_x \nabla^{v}_y \widetilde E(x,y)}^2 \,dx\,dy\leq C(\varepsilon) \,c\,R^{t+2q}\biggl(\frac{r}{R}\biggr)^{\varepsilon}\end{equation}
for all $x_0\in\R^\dmn$ and all $R>4r>0$. 

Furthermore, $p_{\gamma}(y)$ is unique up to adding polynomials of order $v-1$.
\end{lem}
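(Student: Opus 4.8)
The plan is to subtract from $E$ a polynomial in the variable $x$, with $y$-dependent coefficients, built from the ``tail at infinity'' of certain averaged Taylor polynomials of $\nabla^v_y E$. Write $w_\nu(x,y)=\partial^\nu_y E(x,y)$ for $\abs{\nu}=v$; I assume throughout the local integrability of $E$ and of its relevant derivatives needed to make the $x$-averages below meaningful (as holds in our application). First I would fix the basic tool: for a ball or an annulus $U$ and almost every $y$, let $P_U[w_\nu(\,\cdot\,,y)]$ be the unique polynomial of degree at most $m-1$ in $x$ with $\fint_U\partial^\kappa_x(w_\nu-P_U[w_\nu])=0$ for all $\abs{\kappa}\le m-1$; it is linear in $w_\nu$ and depends on $U$ only through its moments, not on $y$. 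Iterating the Poincar\'e inequality on the fixed domain $B(0,2)\setminus B(0,1)$ and rescaling gives, for $0\le q\le m$, the bound $\doublebar{\nabla^{m-q}_x(w_\nu-P_{A(x_0,R)}[w_\nu])}_{L^2(A(x_0,R))}\le CR^q\doublebar{\nabla^m_x w_\nu}_{L^2(A(x_0,R))}$ with $C$ independent of $x_0$, $R$, $y$, $\nu$; integrating in $y$ over $B(x_0,r)$ and applying the hypothesis yields
\begin{equation*}
\int_{B(x_0,r)}\doublebar{\nabla^{m-q}_x(w_\nu-P_{A(x_0,R)}[w_\nu])}_{L^2(A(x_0,R))}^2\,dy\le Cc\,R^{2q+t}\,(r/R)^\varepsilon .
\end{equation*}

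The heart of the matter is a Cauchy estimate for the high-order part of these polynomials. Set $Q=\dmn/2+\varepsilon/2-t/2>0$, so $m-Q=m+t/2-\dmn/2-\varepsilon/2$ and the ``good'' multiindices --- those that occur in the sum defining $\widetilde E$ --- are exactly the $\gamma$ with $m-Q<\abs{\gamma}\le m-1$. I would compare $P_{A(x_0,R)}[w_\nu]$ and $P_{A(x_0,(3/2)R)}[w_\nu]$ on their overlap, an annulus of scale comparable to $R$: the bound above controls $\nabla^{m-q}_x$ of their (polynomial) difference there, and reading off its derivatives on a ball of radius comparable to $R$ inside the overlap, then Taylor-propagating them to the center $x_0$ (at the cost of powers of $R$), gives for every good $\kappa$
\begin{equation*}
\Bigl(\int_{B(x_0,r)}\abs{\partial^\kappa_x P_{A(x_0,R)}[w_\nu](x_0)-\partial^\kappa_x P_{A(x_0,(3/2)R)}[w_\nu](x_0)}^2\,dy\Bigr)^{1/2}\le Cc^{1/2}r^{\varepsilon/2}R^{-(\abs{\kappa}-(m-Q))},
\end{equation*}
the exponent being negative precisely for good $\kappa$ --- this is what forces the threshold $m-Q<\abs{\gamma}$. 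Telescoping the geometric chain $R,\tfrac{3}{2}R,(\tfrac{3}{2})^2R,\dots$ shows that, taking $x_0=0$, the coefficient of $x^\gamma$ in $P_{A(0,R)}[w_\nu]$ converges in $L^2_{loc}(y)$ as $R\to\infty$, for each good $\gamma$, to some $\pi^\infty_{\gamma,\nu}(y)$. Since the coefficients of every $P_{A(0,R)}[w_\nu]$ are $\partial^\nu_y$ of one fixed function of $y$, the array $(\pi^\infty_{\gamma,\nu})_{\abs{\nu}=v}$ inherits the curl-free conditions of Lemma~\ref{lem:curlfree}, so $\pi^\infty_{\gamma,\,\cdot\,}=\nabla^v_y q_\gamma$ for some $q_\gamma$, unique up to polynomials of order $v-1$; only $\nabla^v_y p_\gamma$ ever enters the conclusion, which is the source of that clause. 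I set $p_\gamma:=-q_\gamma$ for good $\gamma$ and $\widetilde E:=E+\sum_{\gamma\text{ good}}p_\gamma(y)\,x^\gamma$, so that $\partial^\nu_y\widetilde E=w_\nu-P^\infty_\nu$ with $P^\infty_\nu(x;y):=\sum_{\gamma\text{ good}}\pi^\infty_{\gamma,\nu}(y)\,x^\gamma$.

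To finish, fix an integer $q$ with $0\le q\le m$ and $q<Q$, and arbitrary $x_0$ and $R>4r>0$. First I would establish basepoint independence: the same overlap comparison between $A(x_0,R)$ and $A(0,R)$ for large $R$, combined with the previous step, shows $\partial^\kappa_x P_{A(x_0,R)}[w_\nu](x_0)\to\partial^\kappa_x P^\infty_\nu(x_0;y)$ in $L^2_{loc}(y)$ for good $\kappa$, so the single polynomial $P^\infty_\nu$ records the good-order data about every basepoint. Then on $A(x_0,R)$, for $\abs{\mu}=m-q$, I split
\begin{equation*}
\partial^\mu_x\partial^\nu_y\widetilde E=\partial^\mu_x\bigl(w_\nu-P_{A(x_0,R)}[w_\nu]\bigr)+\partial^\mu_x\bigl(P_{A(x_0,R)}[w_\nu]-P^\infty_\nu\bigr).
\end{equation*}
The first term, squared and integrated over $B(x_0,r)\times A(x_0,R)$, is at most $Cc\,R^{2q+t}(r/R)^\varepsilon$ by the displayed bound of the first paragraph. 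The second term is a polynomial in $x$ of degree at most $q-1$; since $q<Q$, every $\kappa$ with $m-q\le\abs{\kappa}\le m-1$ is good, so $\partial^\kappa_x(P_{A(x_0,R)}[w_\nu]-P^\infty_\nu)(x_0)$ is the tail of the telescoping sum and has $L^2(B(x_0,r))$-norm at most $Cc^{1/2}r^{\varepsilon/2}R^{-(\abs{\kappa}-(m-Q))}$. Estimating this degree-$(q-1)$ polynomial on $A(x_0,R)=x_0+R\,(B(0,2)\setminus B(0,1))$ through its rescaled $x_0$-derivatives, then squaring and integrating in $y$, produces the bound $Cc\,r^\varepsilon R^{\dmn+2q-2Q}=Cc\,R^{t+2q}(r/R)^\varepsilon$, using $\dmn-2Q=t-\varepsilon$. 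Summing over $\mu$ and $\nu$ gives \eqref{eqn:fundamental:far:low:m}.

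The step I expect to be the main obstacle is the geometric and coordinate bookkeeping in the last two paragraphs: tracking exactly which derivatives of the approximating polynomials converge, propagating the Cauchy estimate from the scale-$R$ overlap region out to the (possibly far-off) center $x_0$ while losing no more than $R^{-(\abs{\kappa}-(m-Q))}$, and verifying that the limits formed about different basepoints assemble into the one standard-coordinate polynomial $P^\infty_\nu$. It is exactly these estimates that pin down both the range $m-Q<\abs{\gamma}\le m-1$ of corrected degrees and the range $q<Q$ of orders for which \eqref{eqn:fundamental:far:low:m} holds.
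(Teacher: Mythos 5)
Your construction is correct and follows the same broad strategy as the paper — telescoping over a geometric chain of annuli, Poincar\'e on each annulus, Lemma~\ref{lem:curlfree} to turn the per-$\nu$ limiting coefficients $\pi^\infty_{\gamma,\nu}$ into a single $p_\gamma$ — but it is organized differently in a way worth noting. The paper's proof is an induction on $q$: one derivative order at a time, using scalar averages $E_i(y)=\fint_{A_i}\partial^\gamma_x\partial^\zeta_y E\,dx$, and each step needs the bound~\eqref{eqn:fundamental:far:low:m} at level $q-1$ to feed the Poincar\'e estimate. You instead subtract the full degree-$(m-1)$ mean-zero polynomial approximant $P_{A}[w_\nu]$ at each scale, which lets you iterate Poincar\'e down from order $m$ all at once and land on $\doublebar{\nabla^{m-q}_x(w_\nu-P_A)}_{L^2(A)}\le CR^q\doublebar{\nabla^m_x w_\nu}_{L^2(A)}$ using only the hypothesis, with no inductive input. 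This effectively unrolls the paper's induction, and your Cauchy estimate on consecutive polynomial coefficients and the explicit bookkeeping of basepoint independence are slightly more careful than the paper's presentation. The exponent arithmetic ($R^{q-Q}$, $\dmn-2Q=t-\varepsilon$, the split of $\partial^\mu_x\partial^\nu_y\widetilde E$) all checks out.

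One piece you leave out is the uniqueness claim. You observe (correctly) that $\nabla^v_y q_\gamma=\pi^\infty_{\gamma,\cdot}$ pins down $q_\gamma$ up to a polynomial of order $v-1$, and that the conclusion only sees $\nabla^v_y p_\gamma$; but that is the easy direction. The lemma asserts that \emph{any} array $\tilde p_\gamma$ yielding \eqref{eqn:fundamental:far:low:m} agrees with yours modulo polynomials of order $v-1$, which the paper proves by a growth contradiction: if $\widetilde E_1-\widetilde E_2=\sum P_\gamma(y)x^\gamma$ satisfied the bound, the left side would grow like $R^\dmn\int_{B(x_0,r)}\abs{\nabla^v_y P_\gamma}^2$ while the right side is $O(R^{t+2q-\varepsilon})$ with $t+2q-\varepsilon<\dmn$, forcing $\nabla^v_y P_\gamma=0$. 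Your machinery actually gives this for free — applying your telescoping construction to a corrected $\widetilde E$ that already satisfies the bound shows the limiting coefficients must vanish — but you should state it explicitly, since it is a separate assertion in the lemma.
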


By Lemma~\ref{lem:fundamental:far:m}, if $E=E^L_{j,k}$ is a component of the fundamental solution for some elliptic operator~$L$, then $E$ satisfies the conditions of Lemma~\ref{lem:fundamental:far:m} for $v=m$ and $t=0$; we will shortly need the lemma for $v<m$ as well.

\begin{proof}[Proof of Lemma~\ref{lem:fundamental:far:low:m}]
We begin with uniqueness. Suppose that there were two such arrays $p$ and $\tilde p$. Let $P_{\gamma}(y)=p_\gamma(y)-\tilde p_\gamma(y)$. If  $m+t/2-\pdmn/2-\varepsilon/2<\abs{\gamma}$ and $\abs{\gamma}\leq m-1$, then the difference $P_{\gamma}(y)\,x^\gamma$ must satisfy the bound~\eqref{eqn:fundamental:far:low:m} for  $q=m-\abs{\gamma}$. Thus, for any $x_0\in\R^\dmn$ and any $R>4r>0$, we have that
\begin{equation*}
\int_{y\in B(x_0,r)}\int_{x\in A(x_0,R)} \abs{\nabla^{\abs{\gamma}}_x \nabla^v_y (P_{\gamma}(y)\,x^\gamma)}^2 \,dx\,dy
\leq C(r,\varepsilon)\,c \,R^{t+2m-2\abs{\gamma}-\varepsilon}.
\end{equation*}
But
\begin{equation*}
\int_{y\in B(x_0,r)}\int_{x\in A(x_0,R)} \abs{\nabla^{\abs{\gamma}}_x \nabla^v_y (P_{\gamma}(y)\,x^\gamma)}^2 \,dx\,dy
=
CR^\dmn\int_{B(x_0,r)}\abs{\nabla^v_y P_{\gamma}(y)}^2\,dy
.\end{equation*}
Because $m+t/2-\pdmn/2-\varepsilon/2<\abs{\gamma}$, we have that ${2m+t-2\abs{\gamma}-\varepsilon}<\dmn$ and so $R^\dmn$ grows faster than $R^{2m+t-2\abs{\gamma}-\varepsilon}$. Thus, the only way that both conditions can hold is if 
$\nabla^{v}_y P_{\gamma}(y)=0$ almost everywhere in $B(x_0,r)$. Since $x_0$ and $r$ were arbitrary this means that $P_{\gamma}$ is a polynomial of order~$v-1$, as desired.

We now construct an appropriate array of functions $p_{\gamma}(y)$.
We work by induction; notice that by assumption, the bound \eqref{eqn:fundamental:far:low:m} is valid in the case $q=0$. 

Choose some $q>0$ satisfying the conditions of the lemma, and suppose that we have renormalized $E$ so that the bound \eqref{eqn:fundamental:far:low:m} is valid if we replace $q$ by $q-1$.
Choose some multiindices $\gamma$ and $\zeta$ with $\abs{\gamma}=m-q$ and $\abs{\zeta}=v$.

Let $A_i=B(x_0,2^i)\setminus B(x_0,2^{i-1})$, and define
\begin{equation*}
E_i(y)=\fint_{A_i}\partial_x^\gamma \partial_y^\zeta E(x,y)\,dx
.\end{equation*}
For any constant $c_i$ we have the bound
\begin{align*}\abs{ E_i(y)- E_{i+1}(y)}
&=\abs[bigg]{
	\fint_{A_i}\partial_x^\gamma \partial_y^\zeta   E(x,y)\,dx 	-\fint_{A_{i+1}}\partial_x^\gamma \partial_y^\zeta E(x,y)\,dx }
\\&\leq
\abs[bigg]{\fint_{A_i}\partial_x^\gamma \partial_y^\zeta E(x,y)\,dx -c_i}
+
\abs[bigg]{\fint_{A_{i+1}}\partial_x^\gamma \partial_y^\zeta E(x,y)\,dx-c_i}
\\&\leq
C\fint_{A_i\cup A_{i+1}}\abs{\partial_x^\gamma \partial_y^\zeta E(x,y)-c_i}\,dx
.\end{align*}
Choosing $c_i$ appropriately, by Poincar\'e's inequality,
\begin{equation*}
\abs{ E_i(y)- E_{i+1}(y)}
\leq
C 2^{-i\pdmnMinusOne}
\int_{A_i\cup A_{i+1}}\abs{\nabla_x^{m-q+1} \nabla_y^{v}E(y,x)}\,dx
.\end{equation*}
Thus by H\"older's inequality
\begin{multline*}
\int_{B(x_0,r)} \abs{ E_i(y)- E_{i+1}(y)}^2\,dy
\\\begin{aligned}
&\leq 
\frac{C}{2^{2i\pdmnMinusOne}}\int_{B(x_0,r)}
\biggl(\int_{A_i\cup A_{i+1}}\abs{ \nabla_x^{m-q+1}\nabla_y^{v} E(y,x)}\,dx\biggr)^2\,dy
\\&\leq 
\frac{C}{2^{i(\dmn-2)}}\int_{B(x_0,r)}
\int_{A_i\cup A_{i+1}}\abs{\nabla_x^{m-q+1}\nabla_y^{v} E(y,x)}^2\,dx\,dy
.\end{aligned}\end{multline*}
Recall that we assumed that we had the desired decay estimates for $q-1$; this implies that
\begin{equation*}\int_{B(x_0,r)} \abs{ E_i(y)- E_i(y)}^2\,dy
\leq 
C\,c\, 2^{i(t-\pdmn+2q-\varepsilon)}
r^{\varepsilon} 
.\end{equation*}
Thus, by our conditions on $q$, $E_\infty(y)=\lim_{i\to\infty} E_i(y)$ exists as an $L^2(B(x_0,r))$-function. As usual we may use Lemma~\ref{lem:curlfree} to see that there is some $p_{\gamma}(y)$ such that $E_\infty(y)=\gamma!\partial^\zeta p_\gamma(y)$. Let $\widetilde E(x,y)=E(x,y)-p_\gamma(y)\,x^\gamma$.

We construct an $\widetilde  E_i$ from $\widetilde E$, similar to our construction of~$E_i$; then $\widetilde E_i$ satisfies the same bounds as above and converges to zero as $i\to\infty$.
Because geometric series converge, we have that
\begin{equation*}\int_{B(x_0,r)} \abs{\widetilde E_i(y)}^2\,dy \leq C(\varepsilon)\,c\, 2^{i(t-\pdmn+2q-\varepsilon)}
r^{\varepsilon}.\end{equation*}

By the Poincar\'e inequality
\begin{multline*}
\int_{B(x_0,r)}\int_{A_i}  \abs{\partial_x^\gamma \partial_y^\zeta \widetilde E^L_{j,k}(x,y)}^2\,dy\,dx
\\\begin{aligned}
&\leq 
	C
	\int_{B(x_0,r)}\int_{A_i}  \abs{\partial_x^\gamma \partial_y^\zeta \widetilde E^L_{j,k}(x,y)-\widetilde E_i(x)}^2\,dx\,dy
	+
	C(\varepsilon)\,c\, 2^{i(t+2q-\varepsilon)}r^{\varepsilon}
\\&\leq 
	C 2^{2i}
	\int_{B(x_0,r)}\int_{A_i}  \abs{\nabla_x^{m-q+1} \partial_y^\zeta \widetilde E^L_{j,k}(x,y)}^2\,dx\,dy
	+
	C(\varepsilon)\,c\, 2^{i(t+2q-\varepsilon)}r^{\varepsilon}
\\&\leq 
	C(\varepsilon)\,c\, 2^{i(t+2q-\varepsilon)}r^{\varepsilon}
\end{aligned}\end{multline*}
as desired. Repeating this construction for all $\gamma$ with $\abs{\gamma}=m-q$, we complete the proof.
\end{proof}

By Lemmas~\ref{lem:fundamental:far:m} and~\ref{lem:fundamental:far:low:m},
there is a unique appropriately normalized representative of $\nabla_x^{m-q}\nabla_y^m\mat E^L(x,y)$. 
Recall that by formula~\eqref{eqn:fundamental:symmetric}, we have that $\mat E^L(x,y)$ satisfies the conclusion of Lemma~\ref{lem:fundamental:far:m} with the roles of $x$ and~$y$ reversed. We may thus find a unique additive normalization of $\nabla_x^{m}\nabla_y^{m-q}\mat E^L(x,y)$. Also notice that by formula~\eqref{eqn:fundamental:symmetric}, applying the same procedure to $\mat E^{L^*}$, we see that this normalization preserves the relations
\begin{gather*}
\nabla_x^{m-q}\nabla_y^m E^L_{j,k}(x,y) = \overline{\nabla_x^{m-q}\nabla_y^m E^{L^*}_{k,j}(y,x)}
,\\
\nabla_x^{m}\nabla_y^{m-q} E^L_{j,k}(x,y) = \overline{\nabla_x^{m}\nabla_y^{m-q} E^{L^*}_{k,j}(y,x)}
.\end{gather*}

We are now interested in the mixed derivatives, that is, in the case where we take fewer than $m$ derivatives in both $x$ and~$y$.

Observe first that if $q<\pdmn(1-1/p^+)$ and if $x_0\in\R^\dmn$, $y_0\in\R^\dmn$, then
\begin{equation*}\int_{B(y_0,R)}\int_{B(x_0,R)} \abs{\nabla^{m}_x \nabla^{m-q}_y \mat E^L(x,y)}^2\,dx\,dy \leq C R^{2q},
\quad R=\abs{x_0-y_0}/3
.\end{equation*}
As in the proof of 
Lemma~\ref{lem:fundamental:far:m}, we may use H\"older's inequality and Theorem~\ref{thm:Meyers} to see that
\begin{equation*}\int_{x\in A(x_0,R)} \int_{y\in B(x_0,r)}\abs{\nabla^{m-q}_y \nabla^{m}_x \mat E^L(x,y)}^2\,dy \,dx\leq C(\varepsilon) R^{2q}\biggl(\frac{r}{R}\biggr)^\varepsilon
\end{equation*}
for all $0<\varepsilon<\pdmn(1-2/p_q^+)$, where $p_q^+=p^+\pdmn/(\dmn-q\,p^+)$ in the case $q<\pdmn/p^+$ and $p_q^+=\infty$ if $\pdmn/p^+\leq q<\pdmn/p^-$. We may rewrite this requirement as $0<\varepsilon<\min(\dmn, \pdmn(1-2/p^+)+2q)$.

We may thus apply Lemma~\ref{lem:fundamental:far:low:m} with $v=m-q$ and $t=2q$.
Hence, if $q$ and $\varepsilon$ are as above, and if 
$ s<\pdmn/2+\varepsilon/2-q$,
then there is a unique additive normalization of $\nabla^{m-q}_y\nabla^{m-s}_x \mat E^L(x,y)$ such that
\begin{equation}
\label{eqn:fundamental:far:low}
\int_{B(x_0,r)}\int_{A(x_0,R)} \abs{\nabla^{m-s}_x \nabla^{m-q}_y \mat E^L(x,y)}^2\,dx\,dy \leq C(\varepsilon) R^{2q+2s}\biggl(\frac{r}{R}\biggr)^\varepsilon
.\end{equation}
We remark that we may find an appropriate $\varepsilon$ if and only if $q$ and~$s$ satisfy the conditions
$0\leq q\leq m$, $0\leq s\leq m$, $q<\pdmn/p^-$, $s<\pdmn/p^-$, and $q+s<\dmn$.

We will establish one more bound on the fundamental solution. Specifically, notice that $\nabla^m_x\nabla^m_y\mat E^L(x,y)$ is only locally integrable away from the diagonal $\{(x,y):x=y\}$. The lower-order derivatives, however, are locally integrable even near $x=y$.

\begin{lem}\label{lem:fundamental:near}
Let $q$ and $s$ be such that $0<q+s<\dmn$ and such that the bound \eqref{eqn:fundamental:far:low} is valid for all $x_0\in\R^\dmn$ and all $R>4r>0$.

Suppose that $p<\pdmn/(\dmn-(q+s))$ and that $p\leq 2$. We then have the local estimate
\begin{equation*}\int_{B(x_0,r)}\int_{B(x_0,r)} \abs{\nabla^{m-s}_x \nabla^{m-q}_y \mat E^L(x,y)}^p\,dx\,dy \leq C r^{2\pdmn-p(\dmn-s-q)}.\end{equation*}
\end{lem}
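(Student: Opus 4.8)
The plan is to deduce the local estimate from the far-field decay bound \eqref{eqn:fundamental:far:low} by a dyadic decomposition of $B(x_0,r)\times B(x_0,r)$ according to the size of $\abs{x-y}$, and then use H\"older's inequality (which is where the hypothesis $p\leq 2$ enters) to pass from an $L^2$ bound on each dyadic shell to the desired $L^p$ bound. The convergence of the resulting geometric series in $j$ will reproduce exactly the condition $p<\pdmn/(\dmn-q-s)$.

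Fix an admissible $\varepsilon$, so that \eqref{eqn:fundamental:far:low} holds with a constant $C(\varepsilon)$ depending only on the standard parameters and on $q,s$; this constant will be absorbed into the final $C$. For $j\geq 0$ put $r_j=2^{-j}r$ and
\[ D_j=\{(x,y)\in B(x_0,r)\times B(x_0,r):r_j<\abs{x-y}\leq 2r_j\}. \]
Since $\abs{x-y}<2r$ whenever $x,y\in B(x_0,r)$, these sets cover $B(x_0,r)\times B(x_0,r)$ up to a null set, and $\abs{D_j}\leq C\abs{B(x_0,r)}\,r_j^\dmn\leq Cr^\dmn r_j^\dmn$. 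The heart of the matter is the $L^2$ bound
\[ \int_{D_j} \abs{\nabla^{m-s}_x\nabla^{m-q}_y \mat E^L(x,y)}^2\,dx\,dy \leq C\,2^{j\dmn}\,r_j^{2q+2s}. \]
To prove it I would cover $B(x_0,r)$ by balls $\{B(z_i,r_j/100)\}_{i=1}^{N_j}$ with $N_j\leq C2^{j\dmn}$ and bounded overlap. If $(x,y)\in D_j$, then $y\in B(z_i,r_j/100)$ for some $i$, and since $r_j<\abs{x-y}\leq 2r_j$ we get $r_j/2<\abs{x-z_i}\leq 4r_j$; thus $x$ lies in the union of the three dyadic annuli $A(z_i,r_j/2)$, $A(z_i,r_j)$, $A(z_i,2r_j)$ that appear in \eqref{eqn:fundamental:far:low}. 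Applying \eqref{eqn:fundamental:far:low} with centre $z_i$, inner radius $\rho=r_j/100$, and each of these three outer radii $R$ (all satisfying $R>4\rho$, with $(\rho/R)^\varepsilon\leq 1$) bounds the corresponding integral by $C(\varepsilon)r_j^{2q+2s}$; summing over $i$ and invoking the bounded overlap gives the displayed $L^2$ bound.

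Finally, on each $D_j$ H\"older's inequality gives
\[ \int_{D_j}\abs{\nabla^{m-s}_x\nabla^{m-q}_y \mat E^L}^p \leq \abs{D_j}^{1-p/2}\biggl(\int_{D_j}\abs{\nabla^{m-s}_x\nabla^{m-q}_y \mat E^L}^2\biggr)^{p/2}, \]
so that, substituting $r_j=2^{-j}r$ and collecting powers,
\[ \int_{D_j}\abs{\nabla^{m-s}_x\nabla^{m-q}_y \mat E^L}^p \leq C\,r^{2\dmn-p(\dmn-q-s)}\,2^{j(p(\dmn-q-s)-\dmn)}. \]
The hypothesis $p<\pdmn/(\dmn-q-s)$, together with $\dmn-q-s>0$, is precisely the statement that $p(\dmn-q-s)-\dmn<0$, so the geometric series $\sum_{j\geq 0}2^{j(p(\dmn-q-s)-\dmn)}$ converges, and summing over $j$ yields the claimed bound $Cr^{2\dmn-p(\dmn-s-q)}$.

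The only point requiring a little care is the covering step: \eqref{eqn:fundamental:far:low} is a genuine decay estimate, valid only for $R>4\rho$, so to use it at a single scale one must take $R$ comparable to $\rho$ and cover the annular region at scale $r_j$ by boundedly many such annuli, the constant $C(\varepsilon)$ remaining uniform once $\varepsilon$ is fixed. Everything else—the measure estimate for $D_j$, the H\"older step, and the summation of the geometric series—is routine bookkeeping.
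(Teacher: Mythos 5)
Your proposal is correct and follows essentially the same route as the paper: a dyadic decomposition of $B(x_0,r)\times B(x_0,r)$ by the distance $\abs{x-y}\sim 2^{-j}r$, an $L^2$ bound at each scale supplied by \eqref{eqn:fundamental:far:low} (applied at comparable $r$ and $R$, so the $(r/R)^\varepsilon$ factor is harmless), a H\"older step to pass to $L^p$ using $p\leq 2$, and a geometric series whose convergence is exactly $p<\pdmn/(\dmn-q-s)$. The paper organizes the same decomposition with dyadic subcubes of a cube containing $B(x_0,r)$ rather than your spherical shells and ball covering, but the bookkeeping is identical; if anything your write-up is more explicit about the covering step needed to reduce the shell integral to applications of \eqref{eqn:fundamental:far:low} at a single scale.
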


\begin{proof}
Let $Q_0$ be the cube of sidelength $\ell(Q_0)=2r$ with $B(x_0,r)\subset Q_0$, so that 
\begin{multline*}
\int_{B(x_0,r)}\int_{B(x_0,r)} \abs{\nabla^{m-s}_x \nabla^{m-q}_y \mat E^L(x,y)}^p\,dx\,dy
\\\begin{aligned}
&\leq
	\int_{Q_0}\int_{2Q_0} \abs{\nabla^{m-s}_x \nabla^{m-q}_y \mat E^L(x,y)}^p\,dx\,dy
.\end{aligned}\end{multline*}
We divide $Q_0$ as follows. Let $\mathcal{G}_j$ be a grid of dyadic subcubes of $Q_0$ of sidelength $2^{1-j}r$. Notice that $\mathcal{G}_0=\{Q_0\}$ and that ${\mathcal{G}_j}$ contains $2^{j\pdmn}$ cubes.

If $y\in B(x_0,r)$, let $Q_j(y)$ be the cube that satisfies $y\in Q_j(y)\in \mathcal{G}_j$. If $Q\in \mathcal{G}_{j+1}$, let $P(Q)$ be the unique cube with $Q\subset P(Q)\in \mathcal{G}_j$. If $Q$ is a cube, let $2Q$ be the concentric cube with side-length $\ell(2Q)=2\ell(Q)$. Then
\begin{multline*}
\int_{Q_0}\int_{2Q_0} \abs{\nabla^{m-s}_x \nabla^{m-q}_y \mat E^L(x,y)}^p\,dx\,dy
\\\begin{aligned}
&=
	\int_{Q_0}
	\sum_{j=0}^\infty
	\int_{2Q_j(y)\setminus 2Q_{j+1}(y)} \abs{\nabla^{m-s}_x \nabla^{m-q}_y \mat E^L(x,y)}^p\,dx\,dy
\\&=
	\sum_{j=0}^\infty
	\sum_{Q\in \mathcal{G}_{j+1}}
	\int_Q
	\int_{2P(Q)\setminus 2Q} \abs{\nabla^{m-s}_x \nabla^{m-q}_y \mat E^L(x,y)}^p\,dx\,dy
.\end{aligned}\end{multline*}
We apply H\"older's inequality to see that 
\begin{multline*}
\int_Q
	\int_{2P(Q)\setminus 2Q} \abs{\nabla^{m-s}_x \nabla^{m-q}_y \mat E^L(x,y)}^p\,dx\,dy
\\	\leq
	C\ell(Q)^{\dmn(2-p)}
	\biggl(\int_Q
	\int_{2P(Q)\setminus 2Q} \abs{\nabla^{m-s}_x \nabla^{m-q}_y \mat E^L(x,y)}^2\,dx\,dy\biggr)^{p/2}
\end{multline*}
and the bound \eqref{eqn:fundamental:far:low} to see that
\begin{equation*}
\int_Q
	\int_{2P(Q)\setminus 2Q} \abs{\nabla^{m-s}_x \nabla^{m-q}_y \mat E^L(x,y)}^p\,dx\,dy
	\leq
	C\ell(Q)^{\dmn(2-p)+(q+s)p}
.\end{equation*}
Combining these estimates and recalling that there are $2^{j\dmn}$ cubes $Q\in \mathcal{G}_j$, we see that
\begin{equation*}
\int_{Q_0}\int_{2Q_0} \abs{\nabla^{m-s}_x \nabla^{m-q}_y \mat E^L(x,y)}^p\,dx\,dy
\leq
	C r^{2\dmn-(\dmn-q-s)p}
	\sum_{j=0}^\infty
	2^{-j\dmn+j(\dmn-q-s)p}
.\end{equation*}
If $p<\pdmn/(\pdmn-(q+s))$, then the geometric series converges, as desired.
\end{proof}

We have renormalized the fundamental solution so that we may bound its lower-order derivatives. This renormalization will not affect the bound \eqref{eqn:fundamental:bound}, and because our renormalization is unique it maintains the symmetry condition \eqref{eqn:fundamental:symmetric}. 

Theorem~\ref{thm:fundamental:high} had one more conclusion, the formula~\eqref{eqn:fundamental}. This states that
\begin{equation*}
\Pi^L_{j}\mydot F(x) 
	= \sum_{k=1}^N \sum_{\abs{\beta}=m} \int_{\R^\dmn} 	\partial_y^\beta E^L_{j,k,z_0,r}(x,y)\,F_{k,\beta}(y)\,dy
	\quad\text{as $\dot W^2_m(\R^\dmn)$-functions}
	.
\end{equation*}
We would like to consider in what sense this equation is still true after renormalization. To address this, we will also need natural normalizations of the left-hand side $\vec\Pi^L\mydot F$ involving decay at infinity; this normalization is given by the following lemma.

\begin{lem}\textup{(The Gagliardo-Nirenberg-Sobolev inequality in $\R^\dmn$).}
\label{lem:GNS}
Let $ u$ lie in the space $\dot W^p_m(\R^\dmn)$ for some $1\leq p<\dmn$. 
Let $0<k<\pdmn/p$ be an integer, and let $p_k=p\,\pdmn/(\dmn-p\,k)$.

Then there is a unique additive normalization of $\nabla^{m-k} u$ in $L^{p_k}(\R^\dmn)$. 
\end{lem}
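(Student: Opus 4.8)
\emph{The plan} is to reduce to the classical first-order Gagliardo--Nirenberg--Sobolev inequality in $\R^\dmn$ and iterate it $k$ times, peeling off one derivative at each stage. The base case I will use is the standard fact that if $1\leq q<\dmn$ and $q^*=q\,\dmn/(\dmn-q)$, and if $v\in L^1_{loc}(\R^\dmn)$ has distributional gradient $\nabla v\in L^q(\R^\dmn)$, then there is a \emph{unique} constant $c_v$ with $v-c_v\in L^{q^*}(\R^\dmn)$, and moreover $\doublebar{v-c_v}_{L^{q^*}(\R^\dmn)}\leq C(q)\doublebar{\nabla v}_{L^q(\R^\dmn)}$. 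This follows from the inequality for $v\in C_c^\infty(\R^\dmn)$ (see \cite[Section~5.6]{Eva98}) by mollification and truncation, and uniqueness of $c_v$ holds because a nonzero constant does not lie in $L^{q^*}(\R^\dmn)$. I will also use that any $u\in\dot W^p_m(\R^\dmn)$ satisfies $\nabla^j u\in L^p_{loc}(\R^\dmn)$ for $0\leq j\leq m$ (by the Poincar\'e inequality on balls applied to $\nabla^m u\in L^p$), so that the base case may legitimately be applied to the derivatives of any representative of~$u$.

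\emph{The iteration.} Set $p_0=p$ and $p_j=p\,\dmn/(\dmn-jp)$ for $1\leq j\leq k$; the hypothesis $k<\dmn/p$ forces $1\leq p_{j-1}<\dmn$ for every $j\leq k$, so the base case is available at exponent $q=p_{j-1}$, with $(p_{j-1})^*=p_j$. Fix a representative $u_0=u$, and suppose inductively that $\nabla^{m-j+1}u_{j-1}\in L^{p_{j-1}}(\R^\dmn)$ (which holds for $j=1$ by hypothesis). For each multiindex $\gamma$ with $\abs{\gamma}=m-j$, the function $\partial^\gamma u_{j-1}\in L^1_{loc}(\R^\dmn)$ has gradient among the components of $\nabla^{m-j+1}u_{j-1}$, hence in $L^{p_{j-1}}(\R^\dmn)$, so the base case yields a constant $c_\gamma$ with $\partial^\gamma u_{j-1}-c_\gamma\in L^{p_j}(\R^\dmn)$. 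The elementary point is that the homogeneous polynomial $P_j(x)=\sum_{\abs{\gamma}=m-j}(c_\gamma/\gamma!)\,x^\gamma$ of degree $m-j$ satisfies $\partial^\gamma P_j=c_\gamma$ for every $\abs{\gamma}=m-j$ (when $\abs{\gamma}=\abs{\alpha}$ only the term $\alpha=\gamma$ survives in $\partial^\gamma x^\alpha$), so \emph{any} prescription of the constants $c_\gamma$ is realized by a single polynomial; setting $u_j=u_{j-1}-P_j$ we obtain $\partial^\gamma u_j\in L^{p_j}(\R^\dmn)$ for all $\abs{\gamma}=m-j$, i.e.\ $\nabla^{m-j}u_j\in L^{p_j}(\R^\dmn)$, which advances the induction. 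After $k$ steps, $u_k=u-\sum_{j=1}^k P_j$ is obtained from $u$ by subtracting a polynomial of degree at most $m-1$, and $\nabla^{m-k}u_k\in L^{p_k}(\R^\dmn)$ with $p_k=p\,\dmn/(\dmn-kp)$; this is the desired normalization, and chaining the base-case bounds also yields $\doublebar{\nabla^{m-k}u_k}_{L^{p_k}(\R^\dmn)}\leq C\doublebar{\nabla^m u}_{L^p(\R^\dmn)}$ at no extra cost.

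\emph{Uniqueness.} If two representatives of the equivalence class of $u$ both have their $(m-k)$-th gradient in $L^{p_k}(\R^\dmn)$, then, since the two representatives differ by a polynomial $P$ of degree at most $m-1$, we would have $\nabla^{m-k}P\in L^{p_k}(\R^\dmn)$; but every component of $\nabla^{m-k}P$ is a polynomial (of degree at most $k-1$), and the only polynomial lying in $L^{p_k}(\R^\dmn)$ is zero, so $\nabla^{m-k}P=0$ and the two normalized gradients agree. I do not anticipate a genuine obstacle here; the one place that needs a moment's care is the compatibility of the constants $c_\gamma$ produced at each stage, which is dispatched by the observation above that they are automatically the $(m-j)$-th derivatives of the single homogeneous polynomial~$P_j$.
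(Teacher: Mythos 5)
Your proof is correct. The paper itself supplies no argument for Lemma~\ref{lem:GNS} — it simply cites Section~5.6.1 of \cite{Eva98} — so there is no internal proof to compare against; your iteration of the first-order homogeneous Gagliardo--Nirenberg--Sobolev inequality is the natural and standard route. The exponent bookkeeping $(p_{j-1})^*=p_j$ checks out, the hypothesis $k<\dmn/p$ is exactly what keeps $1\leq p_{j-1}<\dmn$ for every $1\leq j\leq k$, and the observation that the constants $\{c_\gamma\}_{\abs{\gamma}=m-j}$ produced at stage~$j$ are realized simultaneously as the $\gamma$-th derivatives of the single homogeneous polynomial $P_j(x)=\sum_{\abs{\gamma}=m-j}(c_\gamma/\gamma!)x^\gamma$ (which, being of degree $m-j$, leaves all higher derivatives of $u$ untouched) cleanly disposes of any compatibility concern. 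The uniqueness argument — a nonzero polynomial lies in no $L^q(\R^\dmn)$ for finite $q$ — is likewise correct.

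One small expository caveat, which does not affect the validity of the proof: the phrase ``by mollification and truncation'' undersells the base case. Naively truncating $v_\varepsilon$ by a cutoff $\chi_R$ produces the error term $v_\varepsilon\nabla\chi_R$, and showing its $L^q$ norm vanishes as $R\to\infty$ already requires knowing $v_\varepsilon\in L^{q^*}$, which is the thing being proved. The usual way to close the loop is to use the scale-invariant local estimate $\doublebar{v-\fint_{B_R}v}_{L^{q^*}(B_R)}\leq C\doublebar{\nabla v}_{L^q(B_R)}$, check that the averages $\fint_{B_R}v$ form a Cauchy sequence as $R\to\infty$, and pass to the limit. The conclusion you invoke is certainly correct and standard, so this is a remark on how the base case is justified rather than a gap in your argument.
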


See, for example, Section~5.6.1 in \cite{Eva98}. We use this lemma to address the relation between the Newton potential and the renormalized fundamental solution.

\begin{lem}
\label{lem:renormalization}
Let $p^-<p<\min(\dmn,p^+)$, let $\gamma$ be a multiindex with $m-\pdmn/p<\abs{\gamma}\leq m-1$, and let $q>\pdmn/(m-\abs{\gamma})$.
Let $1\leq j\leq N$.

Suppose that we have normalized $\mat E^L$ as above. We normalize the lower-order derivatives of $\vec \Pi^L\mydot F$ as in Lemma~\ref{lem:GNS}. 
If $\mydot F$ lies in $L^p(\R^\dmn)$ and in $L^q_{loc}(\R^\dmn)$, then
\begin{equation}
\label{eqn:renormalization}
\partial^\gamma\Pi^L_{j} \mydot F(x)=\sum_{k=1}^N \sum_{\abs{\beta}=m} \int_{\R^\dmn} 	\partial_x^\gamma\partial_y^\beta E^L_{j,k}(x,y)\,F_{k,\beta}(y)\,dy\end{equation}
for almost every $x\in\R^\dmn$.
\end{lem}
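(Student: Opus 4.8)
The plan is to prove~\eqref{eqn:renormalization} first for $\mydot F\in C^\infty_c(\R^\dmn)$, where the $(z_0,r)$-normalization of Section~\ref{sec:fundamental:high} can be exploited directly, and then to reach general $\mydot F\in L^p(\R^\dmn)\cap L^q_{loc}(\R^\dmn)$ by density. Write $k=m-\abs{\gamma}$, so that $1\le k<\pdmn/p$ by hypothesis, and let $p_k=p\,\pdmn/(\dmn-p\,k)\in(p,\infty)$ be the Gagliardo--Nirenberg--Sobolev exponent of Lemma~\ref{lem:GNS} for derivatives of order~$k$; let $G_{j,\gamma}(x)$ denote the right-hand side of~\eqref{eqn:renormalization}.

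\textbf{Step 1: the identity for $\mydot F\in C^\infty_c$, up to a polynomial.} For such $\mydot F$ we are in the setting of Theorem~\ref{thm:fundamental:high}, and by the construction of $\mat E^L_{z_0,r}$ as the kernel of $\vec S_x$ in Section~\ref{sec:fundamental:high}, the identity $(\vec\Pi^L_{z_0,r}\mydot F)_j(x)=\sum_{k',\beta}\int_{\R^\dmn}\partial_y^\beta E^L_{j,k',z_0,r}(x,y)\,F_{k',\beta}(y)\,dy$ holds pointwise for every $x$. Differentiating $\partial^\gamma$ in $x$ gives
\begin{equation*}
\partial^\gamma\bigl(\vec\Pi^L_{z_0,r}\mydot F\bigr)_j(x)=\sum_{k'=1}^N\sum_{\abs{\beta}=m}\int_{\R^\dmn}\partial_x^\gamma\partial_y^\beta E^L_{j,k',z_0,r}(x,y)\,F_{k',\beta}(y)\,dy.
\end{equation*}
I now compare the two normalizations. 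The $(z_0,r)$-normalized and the canonically normalized versions of $\mat E^L$ share the unique highest derivative $\nabla_x^m\nabla_y^m\mat E^L$, so for each fixed $y$ the function $\partial_y^\beta(E^L_{z_0,r}-E^L)(\,\cdot\,,y)$ is a polynomial in $x$ of degree $<m$; hence $\partial_x^\gamma\partial_y^\beta(E^L_{z_0,r}-E^L)(x,y)$ is a polynomial in $x$ of degree $<k$ with $L^1_{loc}$ coefficients in $y$, which, integrated against the compactly supported $\mydot F$, contributes only a polynomial in $x$ of degree $<k$. Likewise, the $(z_0,r)$-normalization and the Lemma~\ref{lem:GNS}-normalization of $\vec\Pi^L\mydot F$ differ by a polynomial of order $m-1$. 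Consequently $\partial^\gamma\Pi^L_j\mydot F-G_{j,\gamma}$ is a polynomial in $x$ of degree $<k$.

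\textbf{Step 2: killing the polynomial.} By Lemma~\ref{lem:Newton:bounded} we have $\vec\Pi^L\mydot F\in\dot W^p_m(\R^\dmn)$, so Lemma~\ref{lem:GNS} and the Gagliardo--Nirenberg--Sobolev inequality give $\partial^\gamma\Pi^L_j\mydot F\in L^{p_k}(\R^\dmn)$; since $p_k<\infty$, its averages over the annuli $A(0,R)$ tend to $0$ as $R\to\infty$. On the other hand, if $\supp\mydot F\subset B(0,N)$ and $R\ge 4N$, Cauchy--Schwarz in $y$ together with the decay estimate~\eqref{eqn:fundamental:far:low} (valid with $s=k$ and no lowering of the $y$-derivatives, since $k<\pdmn/p^-<\dmn$) gives $\int_{A(0,R)}\abs{G_{j,\gamma}}^2\le C\doublebar{\mydot F}_{L^2}^2\,R^{2k}(N/R)^\varepsilon$ for an admissible $\varepsilon>2k-\dmn$ (possible because $k<\pdmn/p^-$), so the averages of $G_{j,\gamma}$ over $A(0,R)$ also tend to $0$. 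A polynomial of degree $<k$ whose averages over the annuli $A(0,R)$ decay to $0$ is identically zero; thus~\eqref{eqn:renormalization} holds for every $\mydot F\in C^\infty_c(\R^\dmn)$.

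\textbf{Step 3: density, and the main obstacle.} Choosing $\mydot F_n\in C^\infty_c(\R^\dmn)$ with $\mydot F_n\to\mydot F$ in $L^p(\R^\dmn)$ and in $L^q_{loc}(\R^\dmn)$, the left-hand sides of~\eqref{eqn:renormalization} converge to $\partial^\gamma\Pi^L_j\mydot F$ in $L^{p_k}(\R^\dmn)$ by Lemma~\ref{lem:Newton:bounded}, Lemma~\ref{lem:GNS} and the Sobolev inequality. For the right-hand sides, one checks that $\mydot F\mapsto G_{j,\gamma}$ is continuous from $L^p\cap L^q_{loc}$ into $L^1_{loc}(\R^\dmn)$ by splitting $\R^\dmn$ into the region $\abs{x-y}<1$ and dyadic shells $\abs{x-y}\sim 2^\ell$: near the diagonal one pairs the kernel, which lies in $L^t_{loc}$ for $t<\pdmn/(\dmn-k)$ by Lemma~\ref{lem:fundamental:near} (taken with $s=k$, $q=0$), against $\mydot F\in L^q_{loc}$ with $q>\pdmn/k$ (so $q'<\pdmn/(\dmn-k)$); on the far shells one uses~\eqref{eqn:fundamental:far:low}, Theorem~\ref{thm:Meyers}, and Corollary~\ref{cor:Caccioppoli:interior}, which apply because $y\mapsto\partial_x^\gamma E^L_{j,k'}(x,y)$ solves $L^*(\,\cdot\,)=0$ away from $x$, to pair the kernel against $\mydot F\in L^p$. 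Passing to the limit in the identity of Step~1 then yields~\eqref{eqn:renormalization} in general, and the same continuity shows the integral in~\eqref{eqn:renormalization} converges absolutely for almost every $x$. The principal difficulty is exactly this continuity/convergence estimate: it is a Hardy--Littlewood--Sobolev-type bound that must be produced from the \emph{averaged} decay estimates~\eqref{eqn:fundamental:far:low} and Lemma~\ref{lem:fundamental:near} rather than from a pointwise kernel bound, and it is here that the hypotheses $\mydot F\in L^p$ (controlling the far shells) and $\mydot F\in L^q_{loc}$ (controlling the near shells) play complementary roles; a secondary point is the careful tracking in Step~1 of the polynomial normalization ambiguities.
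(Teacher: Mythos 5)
Your proof is correct and follows essentially the same route as the paper's: you compare $G_{j,\gamma}$ with $\partial^\gamma\Pi^L_{j,z_0,r}$ on compactly supported data, show the discrepancy is a polynomial of degree $<k=m-\abs{\gamma}$ in $x$, kill it by averaging over annuli using the decay estimate~\eqref{eqn:fundamental:far:low} together with the $L^{p_k}$ normalization from Lemma~\ref{lem:GNS}, and extend by density via the near/far boundedness estimate. The only organizational difference is that the paper establishes the $L^q(B)\cap L^p(\R^\dmn)\to L^1(B/2)$ boundedness of $\Pi^L_{j,\gamma}$ first and then runs an induction from $\abs{\gamma}=m-1$ downward via Fatou's lemma, whereas you defer the boundedness to Step~3 and treat all admissible $\gamma$ at once by observing that the two kernels' difference is a polynomial of degree $<k$ in $x$ with locally integrable coefficients in $y$; neither change is substantive.
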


\begin{proof}

Let us define 
\begin{align*}
\Pi^L_{j,\gamma} \mydot F(x)
&=
	\sum_{k=1}^N \sum_{\abs{\beta}=m} \int_{\R^\dmn} 	\partial_x^\gamma\partial_y^\beta E^L_{j,k}(x,y)\,F_{k,\beta}(y)\,dy
\end{align*}
where $\mat E^L$ is the fundamental solution normalized to obey the bound \eqref{eqn:fundamental:far:low}.
We begin by showing that $\Pi^L_{j,\gamma}$ is a bounded operator in some sense. Specifically, let $B(x_0,r)\subset\R^\dmn$ be a ball. We will show that $\Pi^L_{j,\gamma}$ is bounded $L^q(B(x_0,2r))\cap L^p(\R^\dmn)\mapsto L^1(B(x_0,r))$.

First, we see that
\begin{align*}
\int_{B(x_0,r)} \abs{\Pi^L_{j,\gamma}\mydot F(x)}\,dx
&\leq C\int_{B(x_0,r)} \int_{B(x_0,2r)}
 	\abs{\nabla_x^{\abs{\gamma}}\nabla_y^m \mat E^L(x,y)}\,\abs{\mydot F(y)}\,dy
	\,dx
\\&\qquad+ C\sum_{i=1}^\infty\int_{B(x_0,r)} \int_{A_i}
 	\abs{\nabla_x^{\abs{\gamma}}\nabla_y^m \mat E^L(x,y)}\,\abs{\mydot F(y)}\,dy
\,dx
\end{align*}
where $A_i=B(x_0,2^{i+1}r)\setminus B(x_0,2^ir)$.
If $1/q+1/q'=1$, then $q'<\pdmn/(\dmn-(m-\abs{\gamma}))$, and so by Lemma~\ref{lem:fundamental:near} and H\"older's inequality, the first integral is at most
\begin{align*}
C r^{\pdmn-\pdmn/q+m-\abs{\gamma}}
 \doublebar{\mydot F}_{L^q(B(x_0,2r))}
.\end{align*}
We control the second integral as follows. Fix some $i\geq 1$. Then by H\"older's inequality,
\begin{multline*}
\int_{B(x_0,r)}\int_{A_i}
 	\abs{\nabla_x^{\abs{\gamma}}\nabla_y^m \mat E^L(x,y)}\,\abs{\mydot F(y)}\,dy\,dx
\\\begin{aligned}
&\leq
	Cr^{\pdmn/p}\biggl(\int_{B(x_0,r)}\int_{A_i}
 	\abs{\nabla_x^{\abs{\gamma}}\nabla_y^m \mat E^L(x,y)}^{p'}\,dy\,dx\biggr)^{1/p'}
 	\doublebar{\mydot F}_{L^p(A(x_0,2^i r))}
.\end{aligned}\end{multline*}
Notice that $p'<p^+$. Arguing as in the proof of Lemma~\ref{lem:fundamental:far:m}, we use Theorem~\ref{thm:Meyers} to show that
\begin{multline*}
\biggl(\int_{B(x_0,r)}\int_{A_i}
 	\abs{\nabla_x^{\abs{\gamma}}\nabla_y^m \mat E^L(x,y)}^{p'}\,dy\,dx\biggr)^{1/p'}
\\\begin{aligned}
&\leq
C2^{i(\pdmn/p'-\pdmn/2)}r^{2\pdmn/p'-\pdmn}
\biggl(\int_{B(x_0,r)}
\int_{\widetilde A(x_0,2^ir)}
 	\abs{\nabla_x^{\abs{\gamma}}\nabla_y^m \mat E^L(x,y)}^{2}\,dy\,dx\biggr)^{1/2}
\end{aligned}
\end{multline*}
where $\widetilde A(x_0,2^ir)$ is the enlarged annulus $B(x_0,2^{i+2}r)\setminus B(x_0,(3/4) 2^i r)$. 

By the bound \eqref{eqn:fundamental:far:low},
\begin{multline*}
\biggl(\int_{B(x_0,r)}\int_{A_i}
 	\abs{\nabla_x^{\abs{\gamma}}\nabla_y^m \mat E^L(x,y)}^{p'}\,dy\,dx\biggr)^{1/p'}
\\\begin{aligned}
&\leq
C(\varepsilon)2^{i(\pdmn/p'-\pdmn/2+m-\abs{\gamma}-\varepsilon/2)}r^{2\pdmn/p'-\pdmn+m-\abs{\gamma}}
\end{aligned}
\end{multline*}
for all $0<\varepsilon < \min(\dmn,\pdmn(1-1/p^+) + 2m-2\abs{\gamma})$.
Let $\theta= \theta(\varepsilon) = -\pdmn/p'+\pdmn/2-m+\abs{\gamma}+\varepsilon/2$.
We remark that by our assumptions on $\gamma$ and~$p$, we may always find an $\varepsilon$ that satisfies the above conditions and such that $\theta>0$.

Thus,
\begin{align*}
\int_{B(x_0,r)} \abs{\Pi^L_{j,\gamma}\mydot F(x)}\,dx
&\leq Cr^{m-\abs{\gamma}+\pdmn/q'} \doublebar{\mydot F}_{L^q(B(x_0,2r))}
\\&\qquad
	+ C(\theta)r^{m-\abs{\gamma}+\pdmn/p'}
	\sum_{i=1}^\infty 
	2^{-i\theta} 
 	\doublebar{\mydot F}_{L^p(A(x_0,2^i r))}
\end{align*}
and by convergence of geometric series, we have that $\Pi^L_{j,\gamma}$ is bounded as an operator from $L^q(B(x_0,2r))\cap L^p(\R^\dmn)$ to $L^1(B(x_0,r))$, as desired.

We may now work in a dense subspace of $L^p(\R^\dmn)\cap L^q_{loc}(\R^\dmn)$; we will work with $\mydot F$ bounded and compactly supported.

In particular, suppose that $\mydot F$ is supported in some ball $B(y_0,r)$. Let $z_0$ be such that $\abs{y_0-z_0}=3r$, and consider the fundamental solution $\mat E^L_{z_0,r}$ of Theorem~\ref{thm:fundamental:high}; as in Section~\ref{sec:fundamental:high} we will let 
\begin{align*}
\Pi^L_{j,z_0,r} \mydot F(x)
&=
	\sum_{k=1}^N \sum_{\abs{\beta}=m} \int_{\R^\dmn} 	\partial_y^\beta E^L_{j,k}(x,y)\,F_{k,\beta}(y)\,dy.
\end{align*}



Begin with the case $\abs{\gamma}=m-1$. We will show that there is some constant $c$ such that $\Pi^L_{j,\gamma} \mydot F(x)=\partial^\gamma \Pi^L_{j,z_0,r} \mydot F(x)+c$ for almost every $x\in\R^\dmn$; it will then be straightforward to establish that $\Pi^L_{j,\gamma} \mydot F$ decays and so must equal the normalization of Lemma~\ref{lem:GNS}.

Observe that our renormalization of $\mat E^L$ preserves the relation \begin{equation*}\nabla_x^m\nabla_y^m\mat E^L(x,y)=\nabla_x^m\nabla_y^m\mat E^L_{z_0,r}(x,y).\end{equation*}
Thus by Lemma~\ref{lem:fundamental:far:low:m}, 
for every $\beta$ with $\abs{\beta}=m$ and every $j$, $k$, there is a unique function $p$ such that 
\begin{equation*}\partial_x^\gamma\partial_y^\beta E^L_{j,k}(x,y) = 
\partial_x^\gamma\partial_y^\beta E^L_{j,k,z_0,r}(x,y)
+p(y).\end{equation*} 
In particular, while $p$ may depend on $\gamma$, $\beta$, $j$, $k$, $z_0$ and~$r$, once these parameters are fixed, $p$ cannot depend on~$x$. It will be convenient to write $p=p_{k,\beta}$ and leave the remaining dependencies implied.

Let $x_0$ satisfy $\abs{x_0-y_0}=\abs{x_0-z_0}=3r$. Notice that
\begin{align*}
\int_{B(y_0,r)} \abs{p_{k,\beta}}^2
&=
	\int_{B(y_0,r)} \abs[bigg]{\fint_{B(x_0,r)} p_{k,\beta}(y)\,dx}^2 dy
\\&\leq 
	\int_{B(y_0,r)} \fint_{B(x_0,r)} \abs[big]{\partial_x^\gamma\partial_y^\beta E^L_{j,k}(x,y)-\partial_x^\gamma\partial_y^\beta E^L_{j,k,z_0,r}(x,y)}^2\, dx\,dy
\end{align*}
and so, using the bounds \eqref{eqn:fundamental:far:low} and \eqref{eqn:fundamental:bound:m-1}, we see that $p_{k,\beta}\in L^2(B(y_0,r))$ with $\doublebar{p_{k,\beta}}_{L^2(B(y_0,r))}\leq C r^{2-\pdmn}$.

Thus,
\begin{align*}
\Pi^L_{j,\gamma} \mydot F(x)
&=
	\sum_{k=1}^N \sum_{\abs{\beta}=m} \int_{\R^\dmn} 	\partial_x^\gamma\partial_y^\beta E^L_{j,k,z_0,r}(x,y)\,F_{k,\beta}(y)+p_{k,\beta}(y)\,F_{k,\beta}(y)\,dy
	.
\end{align*}
Notice that, by Lemma~\ref{lem:fundamental:near}, $\partial_x^\gamma\partial_y^\beta\mat E^L(x,y)\in L^1(U\times B(y_0,r))$ for any bounded set~$U$. If $U=B(x_0,r)$, then the inclusion $\partial_x^\gamma\partial_y^\beta\mat E^L_{z_0,r}(x,y)\in L^1(U\times B(y_0,r))$ follows from the bound \eqref{eqn:fundamental:bound:m-1}; because $\partial_x^\gamma\partial_y^\beta E^L_{j,k}(x,y) = 
\partial_x^\gamma\partial_y^\beta E^L_{j,k,z_0,r}(x,y)
+p_{k,\beta}(y)$, we may extend this second inclusion to all bounded sets~$U$. Thus
\begin{align*}
\Pi^L_{j,\gamma} \mydot F(x)
&=
	\sum_{k=1}^N \sum_{\abs{\beta}=m} \int_{\R^\dmn} 	\partial_x^\gamma\partial_y^\beta E^L_{j,k,z_0,r}(x,y)\,F_{k,\beta}(y)\,dy
	+
	\int_{\R^\dmn} 	p_{k,\beta}(y)\,F_{k,\beta}(y)\,dy
	.
\end{align*}
Observe that the second integral is convergent and also is independent of~$x$. Furthermore, we may apply Fatou's lemma to the first integral to see that 
\begin{align*}
\Pi^L_{j,\gamma} \mydot F(x)
&=
	c_1+\partial_x^\gamma\sum_{k=1}^N \sum_{\abs{\beta}=m} \int_{\R^\dmn} 	\partial_y^\beta E^L_{j,k,z_0,r}(x,y)\,F_{k,\beta}(y)\,dy
=
	c_1+\partial_x^\gamma\Pi^L_{j,z_0,r}\mydot F(x)
	.
\end{align*}
Because $\vec\Pi^L_{z_0,r}$ is an additive normalization of $\vec\Pi^L$, this implies that $\Pi^L_{j,\gamma} \mydot F(x)= c_2+\partial_x^\gamma\Pi^L_{j}\mydot F(x)$ where $\partial_x^\gamma\Pi^L_{j}\mydot F(x)$ is normalized as in Lemma~\ref{lem:GNS}.
We must now establish that $c_2=0$, that is, that $\Pi^L_{j,\gamma}\mydot F$ decays at infinity. But by the bound~\eqref{eqn:fundamental:far:low}, we have that
\begin{equation*}\lim_{R\to \infty} \fint_{A(y_0,R)} \abs{\Pi^L_{j,\gamma}\mydot F(x)}^2\,dx
=0\end{equation*}
and this can only be true for one additive normalization of $\partial^\gamma \Pi^L_{j} \mydot F$; it is this normalization that is chosen by Lemma~\ref{lem:GNS}, as desired.

We now consider $\abs{\gamma}<m-1$; we still work only with bounded, compactly supported functions~$\mydot F$. If $\abs{\gamma+\zeta}\leq m-1$, then by Fatou's lemma $\vec\Pi^L_{\zeta+\gamma}\mydot F= \partial^\zeta\vec\Pi^L_\gamma \mydot F$, and if $\abs{\gamma+\zeta}= m-1$ then by the above results $\vec\Pi^L_{\zeta+\gamma}\mydot F=\partial^{\zeta+\gamma} \vec\Pi^L\mydot F$.
Thus $\partial^\gamma\vec\Pi^L\mydot F=\vec\Pi^L_\gamma\mydot F$ up to adding polynomials. But again by the bound~\eqref{eqn:fundamental:far:low}, we have that
\begin{equation*}\lim_{R\to \infty} \fint_{A(y_0,R)} \abs{\Pi^L_{j,\gamma}\mydot F(x)}^2\,dx
=0\end{equation*}
whenever $m-\pdmn/p^-<\abs{\gamma}\leq m$; thus, $\partial^\gamma\vec\Pi^L\mydot F=\vec\Pi^L_\gamma\mydot F$, as desired.
\end{proof}

\begin{rmk}
\label{rmk:harmonic}
We have established decay results and the relation~\eqref{eqn:renormalization} only for the higher-order derivatives. We expect the lower-order derivatives to be problematic.
As an example, consider the case of the polyharmonic operator $L=(-\Delta)^m$; we may normalize the fundamental solution so that, for some constant $C_{m,\dmn}$,
\begin{equation*}E^{(-\Delta)^m}(x,y) =  
\begin{cases} 
	C_{m,\dmn} \abs{x-y}^{2m-\pdmn}, & \dmn \text{ odd or }\dmn >2m,\\
	C_{m,\dmn} \abs{x-y}^{2m-\pdmn}\log\abs{x-y}, & \dmn \text{ even and }\dmn \leq 2m.
\end{cases}\end{equation*}
Notice that $\partial_x^\zeta\partial_y^\xi E^{(-\Delta)^m}(x,y)$ decays at infinity only if $\abs{\zeta}+\abs{\xi}>2m-\pdmn$. Furthermore, if $\abs{\zeta}+\abs{\xi}=2m-\pdmn$, then no natural normalization condition applies; the fundamental solution given above must be normalized using deeper symmetry properties of the Laplacian and a choice of length scale for the logarithm.


In the case of more general operators, these symmetry properties are not available, and it is not apparent whether dimensionally-appropriate decay estimates are valid unless $\min(\abs{\zeta},\abs{\xi})>m-\pdmn+\pdmn/p^+$. Thus, in general, we do not have a unique normalization of the fundamental solution for operators of higher order.

We will see that we can construct a fundamental solution for operators of lower order and retain the above decay estimates, and in that case we will have a unique normalization of $\mat E^L$ provided $2m<\dmn$. (If $2m=\dmn$ then we will have unique normalizations of $\nabla_x\mat E^L(x,y)$ and $\nabla_y\mat E^L(x,y)$, and hence a normalization of $\mat E^L$ that is unique up to additive constants.)
\end{rmk}

\subsection{The fundamental solution for operators of lower order}
\label{sec:fundamental:low}

Consider the following theorem. In the case where $2m>\dmn$, validity of the following theorem was established in Sections~\ref{sec:fundamental:high} and~\ref{sec:fundamental:further}. In this section we will establish that Theorem~\ref{thm:fundamental:high:2} is still valid even if $2m\leq \dmn$.
\begin{thm}\label{thm:fundamental:high:2}
Let $L$ be an operator of order~$2m$ that satisfies the bounds \eqref{eqn:elliptic:bounded:2} and \eqref{eqn:elliptic}. 
Then there exists an array of functions $E^L_{j,k}(x,y)$ with the following properties.

Let $q$ and $s$ be two integers that satisfy $q+s<\dmn$ and the bounds $0\leq q\leq \min(m,\pdmn/2)$, $0\leq s\leq \min(m,\pdmn/2)$.

Then there is some $\varepsilon>0$ such that if $x_0\in\R^\dmn$, if $0<4r<R$, if $A(x_0,R)=B(x_0,2R)\setminus B(x_0,R)$, and if $q<\pdmn/2$ then 
\begin{equation}
\label{eqn:fundamental:far:low:2}
\int_{y\in B(x_0,r)}\int_{x\in A(x_0,R)} \abs{\nabla^{m-s}_x \nabla^{m-q}_y \mat E^L(x,y)}^2\,dx\,dy \leq C r^{2q} R^{2s} \biggl(\frac{r}{R}\biggr)^\varepsilon
.\end{equation}
If $q=\pdmn/2$ then we instead have the bound
\begin{equation}
\label{eqn:fundamental:far:lowest:2}
\int_{y\in B(x_0,r)}\int_{x\in A(x_0,R)} \abs{\nabla^{m-s}_x \nabla^{m-q}_y \mat E^L(x,y)}^2\,dx\,dy \leq C(\delta)\, r^{2q} R^{2s} \biggl(\frac{R}{r}\biggr)^\delta
\end{equation}
for all $\delta>0$ and some constant $C(\delta)$ depending on~$\delta$.

We also have the symmetry property
\begin{equation}
\label{eqn:fundamental:symmetric:low}
\partial_x^\gamma\partial_y^\delta E^L_{j,k}(x,y) = \overline{\partial_x^\gamma\partial_y^\delta E^{L^*}_{k,j}(y,x)}
\end{equation}
as locally $L^2$ functions, for all multiindices $\gamma$, $\delta$ with $\abs{\gamma}=m-q$ and $\abs{\delta}=m-s$.

If in addition $q+s>0$, then for all $p$ with $1\leq p\leq 2$ and $p<\pdmn/(\dmn-(q+s))$, we have that
\begin{equation}
\label{eqn:fundamental:near}
\int_{B(x_0,r)}\int_{B(x_0,r)} \abs{\nabla^{m-s}_x \nabla^{m-q}_y \mat E^L(x,y)}^p\,dx\,dy \leq C(p) r^{2\pdmn+p(s+q-\pdmn)}.\end{equation}
for all $x_0\in\R^\dmn$ and all $r>0$.

Finally, there is some $\varepsilon>0$ such that if $2-\varepsilon<p<2+\varepsilon$ then $\nabla^m\vec\Pi^L$ extends to a bounded operator $L^p(\R^\dmn)\mapsto L^p(\R^\dmn)$. If $\gamma$ satisfies $m-\pdmn/p<\abs{\gamma}\leq m-1$ for some such~$p$, then 
\begin{equation}
\label{eqn:fundamental:low}
\partial_x^\gamma
\Pi^L_j\mydot F(x) 
	= \sum_{k=1}^N \sum_{\abs{\beta}=m} \int_{\R^\dmn} 	\partial_x^\gamma\partial_y^\beta E^L_{j,k}(x,y)\,F_{k,\beta}(y)\,dy
	\quad\text{for a.e.\ $x\in\R^\dmn$}
\end{equation}
for all $\mydot F\in L^p(\R^\dmn)$ that are also locally in $L^{P}(\R^\dmn)$, for some $P>\pdmn/(m-\abs{\gamma})$. In the case of $\abs{\alpha}=m$, we still have that
\begin{equation}
\label{eqn:fundamental:2}
\partial^\alpha\Pi^L_j\mydot F(x) 
	= \sum_{k=1}^N \sum_{\abs{\beta}=m} \int_{\R^\dmn} 	\partial_x^\alpha\partial_y^\beta E^L_{j,k}(x,y)\,F_{k,\beta}(y)\,dy
	\quad\text{for a.e.\ $x\notin\supp \mydot F$}
\end{equation}
for all $\mydot F\in L^2(\R^\dmn)$ whose support is not all of $\R^\dmn$.
\end{thm}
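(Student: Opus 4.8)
When $2m>\dmn$ the theorem has already been proved in Sections~\ref{sec:fundamental:high} and~\ref{sec:fundamental:further}, so I would assume $2m\leq\dmn$ and reduce to that case by an auxiliary high-order operator, following the device used in \cite{AusHMT01}. Fix an even integer $M$ with $2(m+M)>\dmn$ and put $\widetilde m=m+M$. I would introduce the order-$2\widetilde m$ operator $\widetilde L$ associated with the coefficient array $\widetilde{\mat A}$ characterized by $\bigl\langle\nabla^{\widetilde m}\vec\varphi,\widetilde{\mat A}\nabla^{\widetilde m}\vec\psi\bigr\rangle_{\R^\dmn}=\bigl\langle\nabla^m\Delta^{M/2}\vec\varphi,\mat A\nabla^m\Delta^{M/2}\vec\psi\bigr\rangle_{\R^\dmn}$; writing $\Delta^{M/2}=\sum_{\abs\zeta=M/2}\frac{(M/2)!}{\zeta!}\partial^{2\zeta}$ one sees that $\widetilde A^{jk}_{\widetilde\alpha\widetilde\beta}$ is an explicit finite combinatorial average of the $A^{jk}_{\alpha\beta}$, so $\widetilde{\mat A}$ is measurable with $\doublebar{\widetilde{\mat A}}_{L^\infty}\leq C\Lambda$, and $\widetilde{\mat A^*}=(\widetilde{\mat A})^*$, so $\widetilde{L^*}=(\widetilde L)^*$. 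Since by Plancherel $\doublebar{\nabla^m\Delta^{M/2}\vec\varphi}_{L^2}^2$ is comparable to $\doublebar{\nabla^{\widetilde m}\vec\varphi}_{L^2}^2$, $\widetilde L$ satisfies \eqref{eqn:elliptic:bounded:2} and \eqref{eqn:elliptic} with constants controlled by those of $L$. As $2\widetilde m>\dmn$, Theorem~\ref{thm:fundamental:high} and the refinements of Section~\ref{sec:fundamental:further} furnish a fundamental solution $\mat E^{\widetilde L}$ satisfying \eqref{eqn:fundamental:bound}, \eqref{eqn:fundamental:symmetric}, \eqref{eqn:fundamental:far:low}, Lemma~\ref{lem:fundamental:near}, and the analogues of \eqref{eqn:fundamental} and \eqref{eqn:renormalization}.

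\textbf{The transfer.} For $\mydot F\in L^2(\R^\dmn)$ (index $m$) I would form the lifted array $\widetilde{\mydot F}$ (index $\widetilde m$) by $\widetilde F_{k,\widetilde\beta}=\sum_{\beta+2\zeta=\widetilde\beta}\frac{(M/2)!}{\zeta!}F_{k,\beta}$; then $\widetilde{\mydot F}\in L^2$ with $\doublebar{\widetilde{\mydot F}}_{L^2}\leq C\doublebar{\mydot F}_{L^2}$ and $\bigl\langle\nabla^{\widetilde m}\vec\psi,\widetilde{\mydot F}\bigr\rangle=\bigl\langle\nabla^m\Delta^{M/2}\vec\psi,\mydot F\bigr\rangle$. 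Comparing the Lax--Milgram definitions of $\vec\Pi^{\widetilde L}\widetilde{\mydot F}$ and $\vec\Pi^L\mydot F$, and using that $\Delta^{M/2}$ maps $\dot W^2_{\widetilde m}(\R^\dmn)$ onto a dense subspace of $\dot W^2_m(\R^\dmn)$, I get $\Delta^{M/2}\vec\Pi^{\widetilde L}\widetilde{\mydot F}=\vec\Pi^L\mydot F$ modulo polynomials of degree $<m$, hence $\nabla^m\vec\Pi^L\mydot F=\Delta^{M/2}\nabla^m\vec\Pi^{\widetilde L}\widetilde{\mydot F}$. Inserting the kernel representation \eqref{eqn:fundamental} for $\vec\Pi^{\widetilde L}$, moving one copy of $\Delta^{M/2}$ onto the $y$-variable by integration by parts (legitimate for $\mydot F$ bounded and compactly supported), and collecting the combinatorial sums, this reads $\nabla^m_x\vec\Pi^L\mydot F(x)=\sum_{k,\beta}\int_{\R^\dmn}\nabla^m_x\partial_y^\beta\mat E^L(x,y)\,F_{k,\beta}(y)\,dy$ once one \emph{defines} $\partial_x^\gamma\partial_y^\delta\mat E^L:=\Delta^{M/2}_x\Delta^{M/2}_y\,\partial_x^\gamma\partial_y^\delta\mat E^{\widetilde L}$ for $\abs\gamma,\abs\delta\leq m$. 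In short, the fundamental solution for $L$ is obtained from that of $\widetilde L$ by the constant-coefficient operator $\Delta^{M/2}_x\Delta^{M/2}_y$, so that each quantity $\nabla^{m-q}_x\nabla^{m-s}_y\mat E^L$ is (a fixed linear combination of) the $\widetilde L$-quantity $\nabla^{\widetilde m-q}_x\nabla^{\widetilde m-s}_y\mat E^{\widetilde L}$.

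\textbf{Reading off the conclusions.} Under this identification, \eqref{eqn:fundamental:far:low:2} for $\mat E^L$ is exactly \eqref{eqn:fundamental:far:low} for $\mat E^{\widetilde L}$ with the same $(q,s)$, valid in the range $q+s<\dmn$, $q\leq\min(m,\pdmn/2)$, $s\leq\min(m,\pdmn/2)$; at the borderline $q=\pdmn/2$, where the decay exponent $\varepsilon$ available from Lemma~\ref{lem:fundamental:far:low:m} degenerates, one obtains only the logarithmic-loss estimate \eqref{eqn:fundamental:far:lowest:2} by absorbing the factor $\log(R/r)\leq C(\delta)(R/r)^\delta$ coming from summing the dyadic annuli. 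The symmetry \eqref{eqn:fundamental:symmetric:low} descends from \eqref{eqn:fundamental:symmetric} for $\widetilde L$ together with $\widetilde{L^*}=(\widetilde L)^*$ and the fact that $\Delta^{M/2}_x\Delta^{M/2}_y$ commutes with $(x,y)\mapsto(y,x)$. The near-diagonal bound \eqref{eqn:fundamental:near} is Lemma~\ref{lem:fundamental:near} applied verbatim with \eqref{eqn:fundamental:far:low:2} as input. The $L^p$-boundedness of $\nabla^m\vec\Pi^L$ for $2-\varepsilon<p<2+\varepsilon$ is Lemma~\ref{lem:Newton:bounded}, whose proof uses only Theorem~\ref{thm:Meyers} (valid for all orders), with $\varepsilon=\min(p^+_L-2,\,2-p^-_{L^*})$. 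Formula \eqref{eqn:fundamental:2} follows from the transfer identity: for $x\notin\supp\mydot F$ the kernel $\nabla^m_x\nabla^m_y\mat E^L$ is a genuine $L^2_{loc}$ function (by \eqref{eqn:fundamental:bound} for $\widetilde L$) and the exchange of $\nabla^m_x$ with the integral is justified by $L^2$-duality as in Section~\ref{sec:fundamental:high}. Finally \eqref{eqn:fundamental:low} is proved by repeating the argument of Lemma~\ref{lem:renormalization}: the operator defined by the right-hand side of \eqref{eqn:fundamental:low} is bounded $L^P_{loc}(\R^\dmn)\cap L^p(\R^\dmn)\to L^1_{loc}(\R^\dmn)$ by \eqref{eqn:fundamental:near}, \eqref{eqn:fundamental:far:low:2} and Theorem~\ref{thm:Meyers}; it agrees with $\partial^\gamma\vec\Pi^L\mydot F$ up to a polynomial on the dense class of bounded compactly supported $\mydot F$ (via the transfer identity and Fatou's lemma); and, since $m-\pdmn/p<\abs\gamma$, the decay in \eqref{eqn:fundamental:far:low:2} forces the polynomial to vanish, giving precisely the Gagliardo--Nirenberg--Sobolev normalization of Lemma~\ref{lem:GNS}.

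\textbf{Main obstacle.} The substantive work is in the first two paragraphs: constructing $\widetilde L$ so that it is simultaneously a bounded, strictly elliptic divergence-form operator of order $2\widetilde m$, compatible with adjoints, and linked to $L$ by the clean identity $\Delta^{M/2}\vec\Pi^{\widetilde L}\widetilde{\mydot F}=\vec\Pi^L\mydot F$; and then checking that $\mat E^L:=\Delta^{M/2}_x\Delta^{M/2}_y\mat E^{\widetilde L}$ has all its normalizations consistent with the claimed formulas — in particular that the polynomial ambiguities created by applying $\Delta^{M/2}$ are exactly those eliminated by the decay estimates and by Lemma~\ref{lem:GNS}. A secondary technical point is the critical exponent $q=\pdmn/2$, where the decay is only logarithmic (this is the phenomenon flagged in Remark~\ref{rmk:harmonic} for $\dmn=2m$) and one must settle for \eqref{eqn:fundamental:far:lowest:2} in place of \eqref{eqn:fundamental:far:low:2}.
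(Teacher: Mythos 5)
Your proposal follows the paper's argument in all essentials: you form an auxiliary operator $\widetilde L$ of order $2\widetilde m>\dmn$ by conjugating $L$ with powers of the Laplacian (your $\Delta^{M/2}$ on each side corresponds to the paper's $\Delta^M$, so $\widetilde L=\Delta^M L\,\Delta^M$), verify via Plancherel that $\widetilde L$ is a bounded elliptic divergence-form operator, prove the transfer identity $\Delta^M\vec\Pi^{\widetilde L}\widetilde{\mydot F}=\vec\Pi^L\mydot F$ by comparing bilinear forms, and define $\mat E^L=\Delta^M_x\Delta^M_y\mat E^{\widetilde L}$, after which the estimates and symmetry transfer directly from Theorem~\ref{thm:fundamental:high} and Section~\ref{sec:fundamental:further}. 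Your verification that $\widetilde{\mat A}$ is explicitly a bounded measurable coefficient array is slightly more detailed than the paper, while the paper works out the transfer identity $\Delta^M\vec\Pi^{\widetilde L}\widetilde{\mydot F}=\vec\Pi^L\mydot F$ more carefully; both expositions are sound, and the only small inaccuracy is that the decay exponent in \eqref{eqn:fundamental:far:low:2} is obtained by reparametrizing $\varepsilon\mapsto\varepsilon+2q$ in \eqref{eqn:fundamental:far:low} rather than being literally ``the same,'' and the critical bound \eqref{eqn:fundamental:far:lowest:2} comes from choosing $\varepsilon$ just below $\dmn$ rather than from an explicit dyadic logarithm.
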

Validity of the condition \eqref{eqn:fundamental:low} requires that we normalize $\vec\Pi^L\mydot F$ by decay at infinity, as in Lemma~\ref{lem:GNS}.

Before proving Theorem~\ref{thm:fundamental:high:2} in the case $2m\leq \dmn$, we mention two important corollaries.

First, we have the following uniqueness result.
\begin{lem}
\label{lem:fundamental:unique}
Let $E^L_{j,k}$ be the fundamental solution given by Theorem~\ref{thm:fundamental:high:2}. Let $m-\pdmn/2\leq\abs{\gamma}\leq m$, let $\abs{\beta}=m$, and let $1\leq j\leq N$, $1\leq k\leq N$.
Let $U$ and $V$ be two bounded open sets with $\overline U\cap \overline V=\emptyset$. Suppose that for some $\widetilde E^L_{j,k,\gamma,\beta}\in L^2(U\times V)$, 
\begin{equation*}\partial^\gamma \Pi^L_j(\1_V F\,\mydot e_{k,\beta})(x)=\int_{V} \widetilde E^L_{j,k,\gamma,\beta}(x,y)\,F(y)\,dy
\quad
\text{as $L^2(U)$-functions}\end{equation*}
for all $\mydot F\in L^2(V)$.

Then $\widetilde E^L_{j,k,\gamma,\beta}(x,y)=\partial_x^\gamma \partial_y^\beta E^L_{j,k}(x,y)$ as $L^2(U\times V)$-functions.

In particular, if $E^L_{j,k}$ and $\widetilde E^L_{j,k}$ both satisfy the conditions of Theorem~\ref{thm:fundamental:high:2}, then 
\begin{equation*}\widetilde E^L_{j,k}(x,y)= E^L_{j,k}(x,y) + \sum_{\abs{\gamma}<m-\pdmn/2} f_\gamma(x)\,y^\gamma + g_\gamma(y)\,x^\gamma\end{equation*}
for some functions $f_\gamma$ and~$g_\gamma$.
\end{lem}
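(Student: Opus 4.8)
The plan is to prove the $L^2$-kernel uniqueness (the first conclusion) directly, and then obtain the ``in particular'' statement by combining it with the uniqueness of the decay-normalization from Section~\ref{sec:fundamental:further} and a short argument about functions with vanishing mixed derivatives.

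For the kernel uniqueness, the key point is that $\partial_x^\gamma\partial_y^\beta E^L_{j,k}$ is itself an admissible choice of $\widetilde E^L_{j,k,\gamma,\beta}$. First, since $\overline U\cap\overline V=\emptyset$ and $U$, $V$ are bounded, a finite covering of $U\times V$ by products of disjoint balls, together with the decay bound \eqref{eqn:fundamental:far:low:2} (or \eqref{eqn:fundamental:far:lowest:2} when $m-\abs{\gamma}=\dmn/2$), shows that $\partial_x^\gamma\partial_y^\beta E^L_{j,k}\in L^2(U\times V)$; by hypothesis the same is true of $\widetilde E^L_{j,k,\gamma,\beta}$. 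Next, when $\abs{\gamma}=m$, equation \eqref{eqn:fundamental:2} gives $\partial^\gamma\Pi^L_j(\1_V F\,\mydot e_{k,\beta})(x)=\int_V\partial_x^\gamma\partial_y^\beta E^L_{j,k}(x,y)\,F(y)\,dy$ for a.e.\ $x\in U$ and every $F\in L^2(V)$ (the hypothesis $x\notin\supp\mydot F$ is met because $\supp(\1_V F)\subseteq\overline V$ is disjoint from $\overline U$); when $m-\dmn/2\le\abs{\gamma}\le m-1$, the same identity is the content of Lemma~\ref{lem:renormalization} applied to $\mydot F=\1_V F\,\mydot e_{k,\beta}$ with $F$ bounded and compactly supported in $V$ (so that $\mydot F$ meets the hypotheses of that lemma for exponents $p$ slightly below~$2$), and hence for all $F\in L^2(V)$ by density, since both sides define bounded operators $L^2(V)\to L^1(U)$. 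Subtracting from the hypothesis, $K:=\widetilde E^L_{j,k,\gamma,\beta}-\partial_x^\gamma\partial_y^\beta E^L_{j,k}\in L^2(U\times V)$ satisfies $\int_V K(x,y)\,F(y)\,dy=0$ for a.e.\ $x\in U$ and every $F\in L^2(V)$. Choosing a countable dense family $\{F_n\}\subset L^2(V)$, discarding the associated countable union of null sets, and using $K(x,\cdot)\in L^2(V)$ for a.e.\ $x$ by Fubini, we conclude $K(x,\cdot)=0$ for a.e.\ $x$, i.e.\ $K=0$ in $L^2(U\times V)$.

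For the ``in particular'' statement, suppose $\widetilde E^L_{j,k}$ also satisfies Theorem~\ref{thm:fundamental:high:2}. The highest-order derivative $\nabla^m_x\nabla^m_y\mat E^L$ is canonically determined by the Newton potential, so $\nabla^m_x\nabla^m_y D=0$, where $D:=\widetilde E^L_{j,k}-E^L_{j,k}$; applying the first conclusion on a covering of $\{(x,y):x\ne y\}$ and discarding the null diagonal gives $\partial_x^\gamma\partial_y^\beta D=0$ a.e.\ for $m-\dmn/2\le\abs{\gamma}\le m$, $\abs{\beta}=m$ (a distributional identity on $\R^\dmn\times\R^\dmn$ when $\abs{\gamma}<m$, by the local integrability \eqref{eqn:fundamental:near}); moreover the uniqueness, up to polynomials of controlled order, of the decay-normalized representatives of $\nabla^{m-s}_x\nabla^{m-q}_y\mat E^L$ (Lemmas~\ref{lem:fundamental:far:m} and \ref{lem:fundamental:far:low:m}) shows that $\partial_x^\gamma\partial_y^\beta D$ is a polynomial for every admissible pair $(\gamma,\beta)$ with $m-\dmn/2\le\abs{\gamma},\abs{\beta}\le m$. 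From these statements one extracts the claimed form: using the elementary fact that a distribution annihilated by all $y$-derivatives of order $N$ is a polynomial in $y$ of degree $<N$, together with Lemma~\ref{lem:curlfree} to integrate the resulting compatible families, one writes $D=\sum_{\abs{\eta}<m-\dmn/2}g_\eta(y)\,x^\eta+(\text{poly.\ in }y\text{ of degree}<m)$ and, symmetrically, $D=\sum_{\abs{\xi}<m-\dmn/2}f_\xi(x)\,y^\xi+(\text{poly.\ in }x\text{ of degree}<m)$; the polynomiality of the mixed derivatives then forces the coefficients of the higher-degree monomials in each representation to be polynomials of degree $<m$, and the decay estimates \eqref{eqn:fundamental:far:low:2} applied to these polynomials kill all monomials $x^\eta y^\xi$ with $\min(\abs{\eta},\abs{\xi})\ge m-\dmn/2$, leaving exactly the two stated sums (these are empty, and $\widetilde E^L_{j,k}=E^L_{j,k}$, when $\dmn>2m$).

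The main obstacle is this last, structural, step. One must keep careful track of which pairs $(\gamma,\beta)$ the vanishing or polynomiality of $\partial_x^\gamma\partial_y^\beta D$ is available for --- in particular that for $\abs{\gamma}=m$ or $\abs{\beta}=m$ the a priori information holds only away from the diagonal --- and combine this with the regularity estimate \eqref{eqn:fundamental:near} of the lower-order derivatives and the decay \eqref{eqn:fundamental:far:low:2}, so that precisely the monomials with $\min(\abs{\gamma},\abs{\delta})<m-\dmn/2$ survive; the borderline situations (for instance $\dmn$ even and equal to $2m$, where the correct conclusion is ``unique up to an additive constant'') require separate bookkeeping. By contrast, the kernel-uniqueness step itself and the covering arguments are routine.
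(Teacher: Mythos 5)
The paper states Lemma~\ref{lem:fundamental:unique} as an immediate ``corollary'' and does not supply a proof, so there is no argument in the text to compare against directly.

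Your proof of the first (kernel-uniqueness) conclusion is complete and correct. In particular: (i) the verification that $\partial_x^\gamma\partial_y^\beta E^L_{j,k}$ is itself an admissible kernel, via \eqref{eqn:fundamental:2} when $\abs{\gamma}=m$ (legitimate since $\supp(\1_V F)\subseteq\overline V$ is disjoint from $\overline U$) and via Lemma~\ref{lem:renormalization} with $p$ slightly below $2$ when $m-\pdmn/2\le\abs{\gamma}\le m-1$; (ii) the membership $\partial_x^\gamma\partial_y^\beta E^L_{j,k}\in L^2(U\times V)$ from a finite covering of $U\times V$ and the decay bounds; and (iii) the density and countable-dense-family arguments are all sound. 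One small simplification: you do not need the $L^1(U)$ target in the density step, since the hypothesis already gives that the left side is bounded $L^2(V)\to L^2(U)$, matching the right side; so the two bounded operators agree on a dense set of $L^2(V)$ and hence everywhere.

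The treatment of the ``in particular'' conclusion has the right ingredients, but the final structural extraction is only sketched, and you say so yourself. Two points would tighten it. First, the invocation of Lemmas~\ref{lem:fundamental:far:m} and~\ref{lem:fundamental:far:low:m} to get \emph{polynomiality} of the mixed derivatives of $D$ is not quite the right tool: those lemmas establish uniqueness (up to polynomials) of the normalization \emph{within} a single function, not that two normalized functions coincide. The cleaner route is: from $\partial_x^\gamma\partial_y^\beta D=0$ for $m-\pdmn/2\le\abs{\gamma}\le m$, $\abs{\beta}=m$ (extended across the diagonal by \eqref{eqn:fundamental:near} when $\abs{\gamma}<m$, and by the canonical character of $\nabla^m_x\nabla^m_y\mat E^L$ when $\abs{\gamma}=m$), Lemma~\ref{lem:curlfree} gives $D(x,y)=\sum_{\abs{\eta}<m-\pdmn/2}A_\eta(y)x^\eta+P(x,y)$ with $P$ a polynomial of degree $<m$ in $y$; then use that $D$ inherits the decay bound \eqref{eqn:fundamental:far:low:2}, and a direct scaling computation (take $\abs{x_0}\to\infty$ with $r$, $R$ fixed) kills the coefficients $B_\xi(x)$ of $y^\xi$ in $P$ for $\abs{\xi}\ge m-\pdmn/2$, which combined with the $x$-symmetric statement yields exactly the claimed form. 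Second, your parenthetical ``these are empty, and $\widetilde E^L_{j,k}=E^L_{j,k}$, when $\dmn>2m$'' is correct (for $\dmn>2m$ the decay bound with $q=s=m$ kills a constant), but do make the borderline case $\dmn=2m$ an explicit exception as you hint at the end: there $m-\pdmn/2=0$, so the hypothesis of the first conclusion degenerates at $\abs{\gamma}=0$ (since $\Pi^L_j$ itself has no canonical normalization, per Remark~\ref{rmk:harmonic}), and the correct conclusion is uniqueness up to an additive constant rather than $\widetilde E^L_{j,k}=E^L_{j,k}$.
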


Second, recall that if $\vec \varphi\in \dot W^2_m(\R^\dmn)$, then $\vec\varphi=\vec\Pi^L(\mat A\nabla^m\vec\varphi)$ as $\dot W^2_m(\R^\dmn)$-functions. Thus, if $\mydot F=\mat A\nabla^m\vec\varphi$ and $\gamma$ satisfy the conditions of formula~\eqref{eqn:fundamental:low}, then
\begin{equation}
\partial^\gamma\varphi_j(x) = \sum_{k=1}^N \sum_{\ell=1}^N \sum_{\abs{\alpha}=\abs{\beta}=m} \int_{\R^\dmn} \partial_x^\gamma\partial_y^\alpha E^L_{j,k}(x,y)\,A^{\alpha\beta}_{k\ell}(y) \,\partial^\beta \varphi_\ell(y)\,dy
\end{equation}
for almost every $x\in\R^\dmn$.

\begin{proof}[Proof of Theorem~\ref{thm:fundamental:high:2}]

Let $L$ be an operator of order $2m$ for some $m\leq \pdmn/2$. Construct the operator $\widetilde L$ as follows. Let $M$ be large enough that $\widetilde m=m+2M>\pdmn/2$, and let $\widetilde L= \Delta^M L \, \Delta^M$. That is, if $\vec u\in \dot W^2_{\widetilde m}(\Omega)$, then 
\begin{equation*}\bigl\langle \vec \varphi, \widetilde L \vec u\bigr\rangle_\Omega = \bigl\langle \Delta^M \vec\varphi, L\Delta^M\vec u\bigr\rangle_\Omega
\quad\text{for all smooth $\vec\varphi$ supported in~$\Omega$}
.\end{equation*}
Then $\widetilde L$ is a bounded and elliptic operator of order $2\widetilde m$, and so a fundamental solution $E^{\widetilde L}_{j,k}$ exists.

There exist constants $a_\zeta$ such that $\Delta^M \varphi = \sum_{\abs{\zeta}=2M} a_\zeta \partial^\zeta \varphi$ for all smooth functions~$\varphi$. Let 
\begin{equation*}
E^{ L}_{j,k}(x,y)
=
	\sum_{\abs{\zeta}=2M} \sum_{\abs{\xi}=2M} a_\zeta \, a_\xi\, \partial_x^{\zeta}\partial_y^{\xi} E^{\widetilde L}_{j,k}(x,y)
.\end{equation*}

We claim that $E^L_{j,k}$ satisfies the conditions of Theorem~\ref{thm:fundamental:high:2}.

First, notice that the symmetry formula~\eqref{eqn:fundamental:symmetric:low} and the bounds \eqref{eqn:fundamental:far:low:2}, \eqref{eqn:fundamental:far:lowest:2} and~\eqref{eqn:fundamental:near} follow immediately from the corresponding formulas for~$E^{\widetilde L}$.

We are left with formulas~\eqref{eqn:fundamental:low} and~\eqref{eqn:fundamental:2}; that is, we must now show that $\partial_x^\gamma\partial_y^\beta E^{ L}_{j,k}(x,y)$ is the kernel of the Newton potential. Choose some bounded, compactly supported function $\mydot F$ and some multiindex $\gamma$ with $m-\pdmn/2\leq\abs{\gamma}\leq m$, and let 
\begin{equation*}\widetilde F_{k,\widetilde \beta} = \sum_{\abs{\xi}=2M,\>\xi<\widetilde\beta} a_\xi \, F_{k,\widetilde\beta-\xi},\qquad\text{for all }\abs{\widetilde \beta}=\widetilde m.\end{equation*}
Let
\begin{equation*}v_j(x)=\sum_{k=1}^N \sum_{\abs{\beta}=m}\int_{\R^\dmn} \partial_x^\gamma\partial_y^\beta E^{ L}_{j,k}(x,y)
\,F_{k,\beta}(y)\,dy.\end{equation*}
We have that 
\begin{align*}
v_j(x)
&=
	\sum_{\abs{\zeta}=2M} a_\zeta  \sum_{k=1}^N 
	\sum_{\abs{\beta}=m}
	\int_{\R^\dmn} \, \partial_x^{\gamma+\zeta}\partial_y^{\beta+\xi} E^{\widetilde L}_{j,k}(x,y)
	\sum_{\abs{\xi}=2M}a_\xi\, F_{k,\beta}(y)\,dy
\\&=
	\sum_{\abs{\zeta}=2M} a_\zeta  \sum_{k=1}^N 
	\sum_{\abs{\widetilde\beta}=\widetilde m}
	\int_{\R^\dmn} \, \partial_x^{\gamma+\zeta}\partial_y^{\widetilde\beta} E^{\widetilde L}_{j,k}(x,y)
	\widetilde F_{k,\widetilde \beta}(y)\,dy
.\end{align*}
Formulas~\eqref{eqn:fundamental:low} and~\eqref{eqn:fundamental:2} are valid for $\mat E^{\widetilde L}$; thus we have that 
\begin{align*}
v_j(x)
&=
	\sum_{\abs{\zeta}=2M} a_\zeta  \partial_x^{\gamma+\zeta}\vec\Pi^{\widetilde L}_j
	\mydot{\widetilde F}(x)
=
	\partial_x^{\gamma}\Delta^M\vec\Pi^{\widetilde L}_j
	\mydot{\widetilde F}(x)
.\end{align*}
Thus, it suffices to show that $\Delta^M\vec\Pi^{\widetilde L} \mydot{\widetilde F}=\vec\Pi^L\mydot F$.

Choose some $\vec\varphi\in \dot W^2_m(\R^\dmn)$; then there is some $\widetilde\varphi\in \dot W^2_{\widetilde m}(\R^\dmn)$ with $\vec\varphi=\Delta^M\widetilde\varphi$. Then
\begin{align*}
\bigl\langle \nabla^m\vec\varphi, \mat A\nabla^m (\Delta^M\vec\Pi^{\widetilde L} \mydot{\widetilde F})\bigr\rangle_{\R^\dmn}
&=
	\bigl\langle \vec\varphi, L (\Delta^M\vec\Pi^{\widetilde L} \mydot{\widetilde F})\bigr\rangle_{\R^\dmn}
=
	\bigl\langle \Delta^M\widetilde\varphi, L (\Delta^M\vec\Pi^{\widetilde L} \mydot{\widetilde F})\bigr\rangle_{\R^\dmn}
.\end{align*}
But by definition of $\widetilde L$,
\begin{align*}
	\bigl\langle \Delta^M\widetilde\varphi, L (\Delta^M\vec\Pi^{\widetilde L} \mydot{\widetilde F})\bigr\rangle_{\R^\dmn}
&=
	\bigl\langle \widetilde\varphi, \widetilde L (\vec\Pi^{\widetilde L} \mydot{\widetilde F})\bigr\rangle_{\R^\dmn}
\end{align*}
and by definition of $\vec\Pi^{\widetilde L}$,
\begin{align*}
	\bigl\langle \widetilde\varphi, \widetilde L (\vec\Pi^{\widetilde L} \mydot{\widetilde F})\bigr\rangle_{\R^\dmn}
&=
	\bigl\langle \nabla^{\widetilde m} \widetilde\varphi, \mydot{\widetilde F}\bigr\rangle_{\R^\dmn}
.\end{align*}
Writing out the sums in the inner product and using the definition of $\widetilde F$, we see that
\begin{align*}
	\bigl\langle \nabla^{\widetilde m}\widetilde\varphi,  \mydot{\widetilde F}\bigr\rangle_{\R^\dmn}
&=
	\sum_{k=1}^N
	\sum_{\abs{\widetilde \beta}=\widetilde m}
	\bigl\langle \partial^{\widetilde \beta}\widetilde\varphi_k,  {\widetilde F}_{k,\widetilde\beta}\bigr\rangle_{\R^\dmn}
\\&=
	\sum_{k=1}^N
	\sum_{\abs{\widetilde \beta}=\widetilde m}
	\sum_{\abs{\delta}=2M,\>\delta<\widetilde\beta} 
	\bigl\langle \partial^{\widetilde \beta}\widetilde\varphi_k,  a_\delta \, F_{k,\widetilde\beta-\delta}\bigr\rangle_{\R^\dmn}
.\end{align*}
Interchanging the order of summation, we see that
\begin{align*}
	\bigl\langle \nabla^{\widetilde m}\widetilde\varphi,  \mydot{\widetilde F}\bigr\rangle_{\R^\dmn}
&=
	\sum_{k=1}^N
	\sum_{\abs{ \beta}= m}
	\sum_{\abs{\delta}=2M} 
	\bigl\langle  a_\delta \partial^{\delta+ \beta}\widetilde\varphi_k,  F_{k,\beta}\bigr\rangle_{\R^\dmn}
\end{align*}
and recalling the definitions of $a_\delta$ and $\widetilde\varphi$, we see that
\begin{align*}
	\bigl\langle \nabla^{\widetilde m}\widetilde\varphi,  \mydot{\widetilde F}\bigr\rangle_{\R^\dmn}
&=
	\sum_{k=1}^N
	\sum_{\abs{ \beta}= m}
	\bigl\langle \partial^{\beta}\Delta^M \widetilde\varphi_k,  F_{k,\beta}\bigr\rangle_{\R^\dmn}
\\&=
	\sum_{k=1}^N
	\sum_{\abs{ \beta}= m}
	\bigl\langle \partial^{\beta}\varphi_k,F_{k,\beta}\bigr\rangle_{\R^\dmn}
=
	\bigl\langle\nabla^m\vec\varphi,\mydot F\bigr\rangle_{\R^\dmn}
.\end{align*}
Thus,
\begin{align*}
\bigl\langle \nabla^m\vec\varphi, \mat A\nabla^m (\Delta^M\vec\Pi^{\widetilde L} \mydot{\widetilde F})\bigr\rangle_{\R^\dmn}
&=
	\bigl\langle\nabla^m\vec\varphi,\mydot F\bigr\rangle_{\R^\dmn}
.\end{align*}
By uniqueness of $\vec\Pi^L\mydot F$, this implies that  $\Delta^M\vec\Pi^{\widetilde L} \mydot{\widetilde F}=\vec\Pi^L\mydot F$, as desired.
\end{proof}





\section*{Acknowledgements}

The author would like to thank Steve Hofmann and Svitlana Mayboroda for many useful conversations concerning topics of interest in the theory of higher-order elliptic equations, and would also like to thank the American Institute of Mathematics for hosting the SQuaRE workshop on ``Singular integral operators and solvability of boundary problems for elliptic equations with rough coefficients,'' at which many of these discussions occurred.


\providecommand{\bysame}{\leavevmode\hbox to3em{\hrulefill}\thinspace}
\providecommand{\MR}{\relax\ifhmode\unskip\space\fi MR }
\providecommand{\MRhref}[2]{%
  \href{http://www.ams.org/mathscinet-getitem?mr=#1}{#2}
}
\providecommand{\href}[2]{#2}

\end{document}